\documentclass[12pt]{amsart}
\usepackage{amsfonts}
\usepackage{amsmath}
\usepackage{amssymb}
\usepackage[margin=1.2in]{geometry}
\setcounter{MaxMatrixCols}{30}

\newtheorem{theorem}{Theorem}[section]
\newtheorem{corollary}[theorem]{Corollary}
\newtheorem{lemma}[theorem]{Lemma}
\newtheorem{proposition}[theorem]{Proposition}

\theoremstyle{definition}

\newtheorem{remark}[theorem]{Remark}
\newtheorem{example}[theorem]{Example}
\newtheorem{claim}{Claim}

\numberwithin{equation}{section}

\begin{document}
\title[Complex Tauberian theorems for Laplace transforms]{Complex Tauberian theorems for Laplace transforms with local pseudofunction boundary behavior}

\author[G. Debruyne]{Gregory Debruyne}
\thanks{G. Debruyne gratefully acknowledges support by Ghent University, through a BOF Ph.D. grant}
\address{G. Debruyne\\ Department of Mathematics\\ Ghent University\\ Krijgslaan 281 Gebouw\\ B 9000 Gent\\ Belgium}
\email{gdbruyne@cage.UGent.be}
\author[J. Vindas]{Jasson Vindas}
\thanks{The work of J. Vindas was supported by the Research Foundation--Flanders, through the FWO-grant number 1520515N}
\address{J. Vindas\\ Department of Mathematics\\ Ghent University\\ Krijgslaan 281\\ B 9000 Gent\\ Belgium}
\email{jvindas@cage.UGent.be}
\subjclass[2010]{Primary 11M45, 40E05; Secondary 30B10, 42A38, 44A10, 46F20.}
\keywords{Complex Tauberians; Fatou-Riesz theorem;  Wiener-Ikehara theorem; pseudofunction boundary behavior; pseudomeasures; Laplace transform; power series; boundary singularities}

\begin{abstract} We provide several Tauberian theorems for Laplace transforms with local pseudofunction boundary behavior. Our results generalize and improve various known versions of the Ingham-Fatou-Riesz theorem and the Wiener-Ikehara theorem. 
Using local pseudofunction boundary behavior enables us to relax boundary requirements to a minimum. Furthermore, we allow possible null sets of boundary singularities and remove unnecessary uniformity conditions occurring in earlier works; to this end, we obtain a useful characterization of local pseudofunctions. Most of our results are proved under one-sided Tauberian hypotheses; in this context, we also establish new boundedness theorems for Laplace transforms with pseudomeasure boundary behavior. As an application, we refine various results related to the Katznelson-Tzafriri theorem for power series.  \end{abstract}

\maketitle

\section{Introduction}
Complex Tauberian theorems for Laplace transforms have been strikingly useful tools in diverse areas of mathematics such as number theory and spectral theory for differential operators \cite{korevaarbook,shubin2001}. Many developments in complex Tauberian theory  from the last three decades  have been motivated by applications in operator theory and semigroups. We refer to the monographs \cite[Chap.~4]{a-b-h-n} and \cite[Chap.~III]{korevaarbook} for complete accounts on the subject and its historical developments (see also the expository article \cite{korevaar2002}). Some recent results can be found in \cite{revesz-roton,seifert,zhang2014}; see \cite{chill} for connections with the theory of almost periodic functions.

Much work on complex Tauberians centers around two groups of statements, usually labeled as Fatou-Riesz theorems or Wiener-Ikehara theorems, and an extensively studied and central problem is that of taking boundary requirements on the Laplace transforms and/or the Tauberian hypotheses on the analyzed functions to a minimum.

The goal of this article is to considerably improve various complex Tauberian theorems for Laplace transforms and power series.  In particular, we shall refine and extend a number of results from \cite{a-o-r,a-b,ingham1935,karamata1934,katznelson,korevaar2002,korevaar2005,korevaar2005FR,s-v}. Most of the theorems from those articles can be considered as generalizations of the classical version of the Fatou-Riesz theorem for Laplace transforms by Ingham \cite{ingham1935} that we state below, or as extensions of the Katznelson-Tzafriri theorem \cite{katznelson} for power series which we will generalize in Section \ref{section power series} (Theorem \ref{psth2}). Our improvements consist, on the one hand, in relaxing the boundary behavior of Laplace transforms (power series) to local pseudofunction behavior, with possibly exceptional null sets of boundary singularities, and, on the other hand, by simultaneously considering one-sided Tauberian conditions on the functions (sequences). It should be pointed out that the use of pseudofunctions in Tauberian theory was initiated by the seminal work of Katznelson and Tzafriri \cite{katznelson}. More recently, Korevaar has written a series of papers \cite{korevaar2002,korevaar2005,korevaar2005FR} that emphasize the role of local pseudofunction boundary behavior as optimal boundary condition in complex Tauberian theorems for Laplace transforms, see also his book \cite{korevaarbook}. 

We mention that in \cite{Debruyne-VindasPNTEquivalences} we have obtained applications of our Tauberian theorems from this article to the study of PNT equivalences for Beurling's generalized numbers, generalizing results by Diamond and Zhang from \cite{diamond-zhang2012}. In that context we show that local pseudofunction boundary behavior appears as a natural condition in the analysis of properties of Beurling zeta functions.

In order to motivate and outline the content of the paper, let us state here two representative results that we shall generalize. We start with the aforementioned Tauberian theorem of Ingham from \cite{ingham1935}, which we formulate in slightly more general terms than its original form.
Let us first fix some terminology. A real-valued 
function $\tau$ is called \emph{slowly decreasing} \cite{korevaarbook} if for each $\varepsilon > 0$ there is $\delta > 0$ such that
\begin{equation}
\label{tintroeq1} \liminf_{x\to\infty}\inf_{h\in[0,\delta]}(\tau(x+h) - \tau(x)) > - \varepsilon.
\end{equation}
We extend the definition of slow decrease to complex-valued functions by requiring that their real and imaginary parts are both slowly decreasing.
An analytic function $G(s)$ on $\Re e\: s>0$ is said to have $L^{1}_{loc}$-boundary behavior on $\Re e\: s=0$ if  $\lim_{\sigma\to 0^{+}} G(\sigma +it)$ exists in $L^{1}(I)$ for any finite interval $I\subset \mathbb{R}$. This is of course the case if $G$ has analytic or continuous extension to $\Re e\: s=0$.
We also point out that Laplace transforms below are given by improper integrals.
\begin{theorem}[Ingham \cite{ingham1935}]
\label{Inghamth} Let $\tau \in L^{1}_{loc}(\mathbb{R})$ be slowly decreasing, vanish on $(-\infty,0)$, and have convergent Laplace transform 
\begin{equation}
\label{tintroeq2}
\mathcal{L}\{\tau;s\}=\int_{0}^{\infty}\tau(x)e^{-sx}\mathrm{d}x \quad \mbox{ for }\ \Re e \: s > 0.
\end{equation}
Suppose that there is a constant $b$ such that
\begin{equation*}
\mathcal{L}\{\tau;s\}- \frac{b}{s}
\end{equation*}
has $L^{1}_{loc}$-boundary behavior on $\Re e\:s=0$, then $\lim_{x\to\infty} \tau(x)=b.$
\end{theorem}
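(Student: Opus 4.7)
The plan is a Fourier-analytic smoothing argument in the spirit of Wiener-Korevaar: replace $\tau$ by its convolution with a Fej\'er kernel, convert the convolution into a Fourier integral against the boundary distribution of the Laplace transform, and then combine Riemann-Lebesgue with slow decrease to upgrade to a pointwise statement. First I would normalize to $b=0$ by replacing $\tau$ with $\tau-b\,\mathbf{1}_{[0,\infty)}$; since $\mathcal{L}\{\mathbf{1}_{[0,\infty)};s\}=1/s$, the new $\tau$ is still slowly decreasing, still vanishes on $(-\infty,0)$, and now $\mathcal{L}\{\tau;s\}$ itself has $L^{1}_{loc}$-boundary behavior on $\Re e\:s=0$, with goal $\tau(x)\to 0$. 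Treating the real and imaginary parts separately, I would then assume $\tau$ is real-valued.

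Next, fix $\lambda>0$ and let $K_{\lambda}$ be the Fej\'er kernel with $\hat K_{\lambda}(t)=(1-|t|/\lambda)_{+}$, which is supported in $[-\lambda,\lambda]$, nonnegative, and of total mass one. Using the inversion $K_{\lambda}(x-y)=\frac{1}{2\pi}\int_{-\lambda}^{\lambda}\hat K_{\lambda}(t)e^{i(x-y)t}\,\mathrm{d}t$ together with Fubini yields, for each $\sigma>0$,
\begin{equation*}
\int_{0}^{\infty}K_{\lambda}(x-y)\tau(y)e^{-\sigma y}\,\mathrm{d}y=\frac{1}{2\pi}\int_{-\lambda}^{\lambda}\hat K_{\lambda}(t)\mathcal{L}\{\tau;\sigma+it\}e^{ixt}\,\mathrm{d}t.
\end{equation*}
Passing $\sigma\to 0^{+}$: on the right, the $L^{1}_{loc}$-boundary hypothesis together with the compact support of $\hat K_{\lambda}$ lets me use dominated convergence to produce $\frac{1}{2\pi}\int_{-\lambda}^{\lambda}\hat K_{\lambda}(t)g(t)e^{ixt}\,\mathrm{d}t$, where $g$ denotes the boundary limit; on the left, a preliminary mild growth estimate on $\tau$ (extracted from convergence of the Laplace integral on $\Re e\:s>0$ and the one-sided control coming from slow decrease) legitimizes the passage and yields the key identity
\begin{equation*}
(K_{\lambda}*\tau)(x)=\frac{1}{2\pi}\int_{-\lambda}^{\lambda}\hat K_{\lambda}(t)g(t)e^{ixt}\,\mathrm{d}t.
\end{equation*}
Since $\hat K_{\lambda}g\in L^{1}(\mathbb{R})$, Riemann-Lebesgue then delivers $(K_{\lambda}*\tau)(x)\to 0$ as $x\to\infty$, for every $\lambda$.

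Finally, I would deduce the pointwise limit from the smoothed one by slow decrease: given $\varepsilon>0$, choose $\delta>0$ from \eqref{tintroeq1} so that $\tau(y+h)\geq \tau(y)-\varepsilon$ on $h\in[0,\delta]$ for all sufficiently large $y$. If $\tau(x_{n})\to L>0$ along some sequence $x_{n}\to\infty$, then $\tau\geq L-2\varepsilon$ on $[x_{n},x_{n}+\delta]$; evaluating $(K_{\lambda}*\tau)$ at $x_{n}+\delta/2$ for large $\lambda$ (so that the bulk of the mass of $K_{\lambda}$ is in $[-\delta/2,\delta/2]$) forces this value to be bounded below by roughly $L/2$ eventually, contradicting the previous paragraph. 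This gives $\limsup\tau\leq 0$; the matching $\liminf\tau\geq 0$ follows by a symmetric argument, using that $\limsup\tau\leq 0$ has just delivered an eventual upper bound on $\tau$, which lets one absorb the slowly decaying $1/y^{2}$ tails of $K_{\lambda}$. The principal obstacle is precisely this last step: extracting a two-sided pointwise conclusion from the one-sided slow-decrease hypothesis while handling the Fej\'er tails without an a priori two-sided bound on $\tau$; a secondary technical hurdle is justifying the $\sigma\to 0^{+}$ passage on the convolution side, which is where the $L^{1}_{loc}$-boundary hypothesis is really doing the work.
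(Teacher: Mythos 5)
Your outline reproduces the correct high-level shape of the argument --- smooth $\tau$ against a kernel with compactly supported Fourier transform, use the boundary behavior plus Riemann--Lebesgue to get averaged decay, then transfer to a pointwise statement via the Tauberian condition --- but it omits the one step that constitutes essentially all of the technical work, and you candidly flag it yourself as "the principal obstacle": \emph{you need $\tau$ to be bounded before the convolution argument has any traction}, and your proposal contains no mechanism for proving it. The paper isolates this as a separate \emph{boundedness theorem} (Theorem~\ref{tbth1}): under the weaker hypothesis of bounded decrease and merely local pseudomeasure boundary behavior at $s=0$, it shows $\tau(x)=O(1)$. Its proof is the heart of the matter --- the delicate "maximality" Claim~\ref{tclaim1} in Step~3, which extracts a nearly-extremal point $y$ to derive a contradiction, is precisely what circumvents the circularity you run into. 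Your proposed order ("first $\limsup\tau\le 0$, then deduce $\liminf\tau\ge 0$ using the resulting upper bound") is circular: to control the Fej\'er tails in showing $\limsup\tau\le 0$ you already need a \emph{lower} bound on $\tau$ in the past (slow decrease only bounds $\tau$ from below to the future), and to show $\liminf\tau\ge 0$ you need the upper bound; neither half stands on its own.

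There is also a quantitative incompatibility with the Fej\'er kernel specifically. Slow decrease gives no a priori growth bound; the paper establishes $\tau(x)=O(x)$ only in Step~2 of the proof of Theorem~\ref{tbth1}, and that already uses the boundary hypothesis, so it is not a "preliminary mild growth estimate extractable from convergence of the Laplace integral." Worse, even granting $\tau(x)=O(x)$, the Fej\'er kernel decays only like $1/y^{2}$, so $\int K_{\lambda}(y)\,|\tau(x-y)|\,\mathrm{d}y$ has a divergent $\int |y|^{-1}\mathrm{d}y$ tail and $K_{\lambda}*\tau$ need not converge absolutely. The paper sidesteps this by working instead with nonnegative Schwartz-class kernels $\psi\in\mathcal{F}(\mathcal{D}(-\lambda,\lambda))$, whose rapid decay is compatible with polynomial growth; and for the final passage from averaged decay to $\tau(x)=o(1)$, it switches to \emph{compactly supported, one-sided} test functions $\phi\in\mathcal{D}(-\delta,0)$ (respectively $\mathcal{D}(0,\delta)$), for which there are no tails at all and slow decrease yields $\liminf$ and $\limsup$ cleanly. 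That switch from band-limited kernels to compactly supported ones is licensed by a density argument (Proposition~\ref{cpfp1}), and it is exactly there that boundedness of $\tau$ (i.e.\ $\tau\in\mathcal{B}'(\mathbb{R})$) is indispensable. So: correct skeleton, but the missing bone is the boundedness theorem, and without it the argument does not close.
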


Special cases of Theorem \ref{Inghamth} were also proved by Karamata \cite{karamata1934}, and notably by Newman in connection with his attractive simple proof of the PNT via contour integration \cite{newman,newmanbook}. Newman's method was later adapted to other Tauberian problems in numerous articles, see e.g. \cite{a-o-r,a-b,korevaar1982,korevaar2005FR,korevaar2006,zagier1997} and the various bibliographical remarks in \cite[Chap.~III]{korevaarbook}. In particular, Arendt and Batty \cite{a-b} gave the following Tauberian theorem, which is a version of Theorem \ref{Inghamth} for absolutely continuous $\tau(x)=\int_{0}^{x}\rho(u)\mathrm{d}u$ with the more restrictive two-sided Tauberian hypothesis that $\rho(x)$ is bounded. Nevertheless, they allowed a (closed) null set of possible boundary singularities.
\begin{theorem}[Arendt and Batty \cite{a-b}]
\label{K-ABth} Let $\rho \in L^{\infty}(\mathbb{R})$ vanish on $(-\infty,0)$.  Suppose that $\mathcal{L}\{\rho; s\}$ has analytic continuation at every point of the complement of $i E$ where $E\subset \mathbb{R}$ is a closed null set. If $0\notin iE$ and  
\begin{equation}
\label{tintroeq3}
\sup_{t\in E} \sup_{x>0}\left| \int_{0} ^{x} e^{-i tu}\rho(u)\mathrm{d}u\right|<\infty,
\end{equation}
then the (improper) integral of $\rho$ converges to $b=\mathcal{L}\{\rho; 0\}$, that is, 
\begin{equation}
\label{tintroeq4}
\int_{0}^{\infty}\rho(x)\mathrm{d}x=b.
\end{equation}
\end{theorem}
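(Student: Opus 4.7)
My plan is to adapt Newman's contour-integration argument, with the Tauberian boundedness hypothesis used to deform the contour around the null set of singularities $iE$. With $G(s) = \mathcal{L}\{\rho;s\}$ and the entire truncated Laplace transform $G_x(s) = \int_{0}^{x}\rho(u)e^{-su}\mathrm{d}u$, the desired identity \eqref{tintroeq4} is equivalent to $G_x(0)\to G(0)$ as $x\to\infty$. For large $R>0$, Newman's kernel $K(s) = (s^{-1} + s/R^{2})e^{sx}$ has residue $1$ at $s=0$. Applying Cauchy's theorem to both $G$ and $G_x$ against $K$ along a common closed contour $\Gamma$ encircling $0$, on which $G$ is analytic, yields
\begin{equation*}
G_x(0) - G(0) = \frac{1}{2\pi i}\int_{\Gamma}(G_x(s)-G(s))K(s)\mathrm{d}s.
\end{equation*}

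I take $\Gamma = \Gamma_+ \cup \Gamma_-$ with right arc $\Gamma_+ = \{|s|=R,\ \Re e\: s\geq 0\}$ and left arc $\Gamma_-$ running from $iR$ to $-iR$ in $\Re e\: s\leq 0$, and deform $\Gamma_-$ around the compact null set $E\cap[-R,R]$. For each $\varepsilon>0$ and $x\geq 1$, I cover $E\cap[-R,R]$ by finitely many intervals $(a_j - \delta_j, a_j + \delta_j)$ of total length less than $\varepsilon$ with $\delta_j\leq 1/x$, and arrange $\Gamma_-$ to lie in $\Re e\: s \leq -\eta$ except on small detours into $\Re e\: s>0$ of horizontal depth of order $\delta_j$ around each cover interval. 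Newman's classical estimates give an $O(1/R)$ contribution from $\Gamma_+$ (uniform in $x$), while the safe portion of $\Gamma_-$ in $\Re e\: s \leq -\eta$ contributes $o(1)$ as $x\to\infty$ by dominated convergence, using $|K(s)|\leq Ce^{-\eta x}$ and local boundedness of $G$ and $G_x$.

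The crux is estimating the detour contributions, where $e^{sx}$ no longer damps. Here the Tauberian hypothesis enters via integration by parts: with $\Phi_{t_0}(u) = \int_{0}^{u}e^{-it_0 v}\rho(v)\mathrm{d}v$, bounded uniformly in $t_0 \in E$ and $u > 0$ by some $M$, we obtain for $\Re e\:\zeta > 0$
\begin{equation*}
G(it_0 + \zeta) = \zeta\int_{0}^{\infty}\Phi_{t_0}(u)e^{-\zeta u}\mathrm{d}u, \qquad |G(it_0 + \zeta)| \leq \frac{M|\zeta|}{\Re e\:\zeta},
\end{equation*}
and an analogous bound for $G_x$. By choosing the detour geometry so that $\Re e\:(s-it_0)$ is comparable to $|s-it_0|$ along the contour (essentially, a tangential rather than perpendicular approach to the imaginary axis), the sector bound becomes a uniform pointwise bound $|G(s) - G_x(s)| \leq CM$ on the detour. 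Combined with $|e^{sx}|\leq e^{\delta_j x}\leq e$ and a total detour arc length $O(\varepsilon)$, the total detour contribution is $O(\varepsilon)$. Taking $\limsup_{x\to\infty}$, then $\varepsilon\to 0$ and $R\to\infty$, concludes the proof.

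The main obstacle is the precise detour design: the estimate $|G(it_0 + \zeta)|\leq M|\zeta|/\Re e\:\zeta$ is not a uniform pointwise bound but a sector-type one, blowing up along the imaginary axis. The detours must therefore be shaped so that the contour remains inside a sector $\Re e\:(s-it_0)\geq c|s-it_0|$ uniformly, which precludes a simple rectangular geometry joining the imaginary axis perpendicularly; one must round the corners or extend each detour slightly into an analyticity disk at $i(a_j\pm\delta_j)$, chosen away from $iE$ using density of $\mathbb{R}\setminus E$. This careful geometric matching of the Tauberian sector estimate with a shallow, short-detour contour, simultaneously with a refinement of the cover of $E$ as $x\to\infty$ preserving total measure $<\varepsilon$, is where the null-set hypothesis is used essentially.
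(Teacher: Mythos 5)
Your strategy is the Newman-type contour argument (close in spirit to Arendt and Batty's original proof, and quite different from this paper's route, which obtains Theorem \ref{K-ABth} as a special case of Theorem \ref{tth1} via the boundedness Theorem \ref{tbth1} and the local-pseudofunction characterization of Section \ref{section pseudofunctions}), but as written it has a genuine gap at its central step. The Tauberian hypothesis (\ref{tintroeq3}) only yields the sector estimate $|G(it_0+\zeta)|\leq M|\zeta|/\Re e\:\zeta$ at points $t_0\in E$, so, as you note, the detours must have depth comparable to their width $\delta_j$, and the factor $e^{x\Re e\:s}$ then forces $\delta_j\lesssim 1/x$; hence the cover, and with it the whole contour, must be refined as $x\to\infty$. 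This is incompatible with your treatment of the remaining ``safe'' part of $\Gamma_-$: for large $x$ the good gaps between the shrinking covering intervals reach points whose distance to $E$ is of order $1/x$, and the analytic continuation of $G$ is only guaranteed on \emph{some} open neighborhood of $i(\mathbb{R}\setminus E)$, which may pinch arbitrarily fast near $E$. So a fixed left depth $\eta$ is not available (for large $x$ the contour you describe leaves the domain of analyticity), while if $\eta=\eta(x)$ shrinks then $e^{-\eta(x)x}$ gives nothing and, worse, no hypothesis bounds $|G|$ on these $x$-dependent portions: the sector bound holds only at points of $E$ and only for $\Re e\:\zeta>0$, and near but off $E$ the continuation can blow up arbitrarily. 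The same lack of uniform control afflicts the ``analyticity disks'' at the junction points $i(a_j\pm\delta_j)$, whose number grows with $x$ and whose radii and bounds are not quantified by anything in the hypotheses. Dominated convergence needs a fixed contour and a dominating bound, and neither is present. This is exactly the difficulty the paper sidesteps by never deforming contours, working instead on the boundary distributionally (Theorem \ref{cpfth1}, via Romanovski's lemma and Baire, which even dispenses with the uniformity in $t$).

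There is also a more routine error: on the portion of $\Gamma_-$ with $\Re e\:s\leq-\eta$ you invoke ``local boundedness of $G$ and $G_x$''. But $G_x(s)=\int_0^x\rho(u)e^{-su}\mathrm{d}u$ is not bounded uniformly in $x$ in the left half-plane; it grows like $e^{x|\Re e\:s|}/|\Re e\:s|$, so $|G_xK|$ is merely $O(1/\eta)$ there, not $o(1)$. In Newman's scheme the $G_x$-part must be treated separately, for instance by deforming its integral to the left semicircle $|s|=R$, $\Re e\:s\leq 0$ (legitimate since $G_x$ is entire and the pole of $K$ at $0$ is not enclosed), which yields the $O(1/R)$ bound; only the $G$-part can be handled by letting $e^{sx}\to 0$ on a fixed compact subset of the continuation domain. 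As it stands, the proposal neither establishes the safe-portion estimate nor produces an admissible contour for large $x$, so the argument does not go through without substantially new input (essentially the careful quantitative construction of Arendt--Batty's original proof, or the distributional machinery of this paper).
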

A power series version of Theorem \ref{K-ABth} was obtained by Allan, O'Farrell, and Ransford in \cite{a-o-r}. Korevaar \cite{korevaar2005FR} also gave a version of Theorem \ref{K-ABth} employing the less restrictive local pseudofunction boundary behavior (but without allowing boundary singularities).

We shall prove the ensuing Tauberian theorem of which Theorem \ref{Inghamth} and Theorem \ref{K-ABth} are particular instances. Define 
\begin{equation}
\label{eqlogcoeff}
D_{j}(\omega)=\frac{d^{j}}{dy^{j}}\left.\left(\frac{1}{\Gamma(y)}\right)\right|_{y=\omega}.
\end{equation}
We refer to Section \ref{section preli} for the definition of local pseudofunction boundary behavior and some background material on related concepts. 
\begin{theorem}
\label{Fatou-Riesz1}  Let $\tau \in L^{1}_{loc}(\mathbb{R})$ be slowly decreasing, vanish on $(-\infty,0)$, and have convergent Laplace transform $(\ref{tintroeq2})$.
Let $\beta_1\leq \dots\leq \beta_m\in [0,1)$ and $k_1,\dots,k_{m}\in\mathbb{Z}_{+}$.
\begin{enumerate}
\item [(i)] If the analytic function 
\begin{equation*}
\mathcal{L}\{\tau;s\}- \frac{a}{s^{2}}- \sum_{n=1}^{N}\frac{b_n}{s-it_n}- \sum_{n=1}^{m}\frac{c_n+d_n\log^{k_n}\left(1/s\right)}{s^{\beta_n+1}} \quad\quad (t_{n}\in\mathbb{R})
\end{equation*}
has local pseudofunction boundary behavior on $\Re e\:s=0$, then 
\begin{equation*}
\tau(x)= ax+\sum_{n=1}^{N}b_{n}e^{it_n x}+ \sum_{n=1}^{m} x^{\beta_n}\left(\frac{c_n}{\Gamma(\beta_n+1)}+d_n\sum_{j=0}^{k_n} \binom{k_n}{j}D_{j}(\beta_n+1)\log^{k_n-j}x \right)+o(1).
\end{equation*}
\item [(ii)] Suppose that there is a closed null set $E\subset \mathbb{R}$ such that:
 \begin{enumerate}
 \item [(I)] The analytic function
\begin{equation*}
\mathcal{L}\{\tau;s\}- \sum_{n=1}^{N}\frac{b_n}{s-it_n} \quad\quad (t_{n}\in\mathbb{R})
\end{equation*}
 has local pseudofunction boundary behavior on the open subset $i(\mathbb{R}\setminus E)$ of $\Re e\:s=0$,
  \item [(II)] for every $t\in E$ there is $M_t>0$ such that
\begin{equation}
\label{tintroeq6}
\sup_{x>0}\left| \int_{0}^{x}\tau(u)e^{-itu}\mathrm{d}u\right|<M_{t},
\end{equation}
and
 \item [(III)] $E\cap \left\{t_1,\dots,t_N\right\} =\emptyset.$
\end{enumerate}
Then 
\begin{equation*}
\tau(x)= \sum_{n=1}^{N}b_{n}e^{it_n x}+o(1).
\end{equation*}

 \end{enumerate}

\end{theorem}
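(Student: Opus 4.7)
The plan is to reduce both parts to a baseline Fatou--Riesz-type Tauberian theorem in which the only boundary hypothesis on $\mathcal{L}\{\tau;s\}$ is local pseudofunction boundary behavior on the whole line $\Re e\: s=0$. The reduction proceeds by exhibiting explicit auxiliary functions whose Laplace transforms match the singular parts appearing in the statement, subtracting them from $\tau$, and then verifying that slow decrease and the boundary hypotheses are preserved.

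For part (i), I first identify the inverse Laplace transform of each singular term. Starting from $\mathcal{L}\{x^{\omega-1}/\Gamma(\omega);s\}=s^{-\omega}$, valid for $\Re e\: \omega>0$, differentiating $k$ times with respect to $\omega$, evaluating at $\omega=\beta_n+1$, and applying Leibniz's rule together with (\ref{eqlogcoeff}) gives
\[
\mathcal{L}\left\{x^{\beta_n}\sum_{j=0}^{k_n}\binom{k_n}{j}D_j(\beta_n+1)\log^{k_n-j}x;\,s\right\}=\frac{\log^{k_n}(1/s)}{s^{\beta_n+1}}.
\]
Combined with the elementary identities $\mathcal{L}\{x;s\}=1/s^{2}$ and $\mathcal{L}\{e^{it_nx};s\}=1/(s-it_n)$, subtracting the full candidate expression from $\tau$ produces a function $\tilde\tau$ that vanishes on $(-\infty,0)$ and remains slowly decreasing, since each subtracted summand is smooth on $(0,\infty)$ with differences that either are $o(1)$ or have bounded oscillation on small intervals. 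By construction $\mathcal{L}\{\tilde\tau;s\}$ coincides with the analytic function displayed in~(i), so it has local pseudofunction boundary behavior on $\Re e\: s=0$, and the baseline theorem forces $\tilde\tau(x)=o(1)$.

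For part (ii), the reduction uses only the exponential terms: set $\tilde\tau(x)=\tau(x)-\sum_{n=1}^{N}b_{n}e^{it_nx}$. Then $\mathcal{L}\{\tilde\tau;s\}$ has local pseudofunction boundary behavior on $i(\mathbb{R}\setminus E)$ by (I). The crucial observation is that hypothesis (II) is inherited by $\tilde\tau$: for each $t\in E$,
\[
\int_{0}^{x} e^{-itu}\tilde\tau(u)\,\mathrm{d}u=\int_{0}^{x} e^{-itu}\tau(u)\,\mathrm{d}u-\sum_{n=1}^{N}b_{n}\,\frac{e^{i(t_n-t)x}-1}{i(t_n-t)},
\]
and the subtracted sum is uniformly bounded in $x$ because (III) guarantees $t_n\neq t$. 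It then remains to invoke a Tauberian theorem for slowly decreasing functions whose Laplace transform has local pseudofunction boundary behavior off a closed null set $E$ while satisfying a uniform partial-integral bound at every point of $E$; this yields $\tilde\tau(x)=o(1)$, which is the desired conclusion.

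The main obstacle is precisely this last step of (ii): establishing a convergence theorem that tolerates a null set of possible boundary singularities under only the one-sided slow decrease Tauberian condition. At points of $E$ one has at most partial-integral boundedness, which corresponds to pseudomeasure rather than pseudofunction boundary behavior, and the standard Fatou--Riesz arguments do not apply directly there. I expect a two-step strategy: first, combine the partial-integral control on $E$ with the pseudofunction behavior on its complement to deduce boundedness of $\tilde\tau$, via a boundedness theorem for Laplace transforms with pseudomeasure boundary behavior (such a result is announced in the abstract); then bootstrap this boundedness together with slow decrease into genuine convergence by a Korevaar-style contour or mollification argument, localized away from $E$ and exploiting that $E$ has Lebesgue measure zero to absorb the exceptional frequencies.
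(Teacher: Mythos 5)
Your reduction steps are sound and in fact coincide with what the paper does: Theorem \ref{Fatou-Riesz1} is derived from the general Theorem \ref{tth1} by exactly the subtractions you describe (for (i), the inverse Laplace transforms of $a/s^{2}$, $b_n/(s-it_n)$ and $\log^{k_n}(1/s)/s^{\beta_n+1}$, noting these subtracted functions are smooth with bounded, resp.\ vanishing, derivatives so slow decrease is preserved; for (ii), your computation that (II) and (III) give the bound for $\tilde\tau$ is precisely the paper's observation that they imply (\ref{ttheqbc1}) with $G(s)=\sum it_nb_n(s-it_n)^{-1}$, $b=\sum b_n$). Citing the paper's boundedness theorem for boundedly decreasing functions with pseudomeasure boundary behavior at $s=0$ (Theorem \ref{tbth1}) as a black box is also legitimate, since it is a separately stated result.

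The genuine gap is the step you yourself flag: after the reduction, part (ii) with $N=0$ is exactly the statement still to be proved, and neither it nor your ``baseline'' whole-line theorem in (i) (slow decrease plus pseudofunction boundary behavior on all of $\Re e\,s=0$) is available in the prior literature for one-sided Tauberian conditions -- both are instances of Theorem \ref{tth1}, which is part of what must be established. Your sketched bootstrap (``a Korevaar-style contour or mollification argument, localized away from $E$, exploiting that $E$ has measure zero'') does not address the two decisive difficulties. First, the bounds $M_t$ in (\ref{tintroeq6}) are not uniform in $t\in E$, and no localization away from $E$ can by itself absorb this: the paper handles it through a characterization of local pseudofunctions (Theorem \ref{cpfth1}), whose proof needs the Baire category theorem to extract a portion of $E$ on which the bounds are uniform, Romanovski's lemma to propagate pseudofunction behavior to all of $U$, and a quantitative covering estimate (Lemma \ref{cpfl2}) in which the total length $\sum_j \ell_j$ of intervals covering the compact null piece of $E$ multiplies the bound -- this is where ``measure zero'' actually enters. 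Second, once one knows $\operatorname{sing\,supp}_{PF}\hat{\tau}=\emptyset$, the passage from this to $\tau(x)=o(1)$ goes through $\tau\in\dot{\mathcal{B}}'(\mathbb{R})$ (Proposition \ref{cpfp1}) and a careful choice of one-sided test functions exploiting slow decrease; a contour argument in the style of Newman/Korevaar is not adapted to a merely null, non-isolated singular set. Without supplying these ingredients (or an equivalent mechanism), the core of (ii) -- and hence of the theorem -- remains unproved in your proposal.
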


We actually obtain more general Laplace transform versions of the Ingham-Fatou-Riesz theorem than Theorem \ref{Fatou-Riesz1} in Section \ref{section tauberians Laplace}, where we also study one-sided Tauberian conditions other than slow decrease. In particular, we prove there a Tauberian theorem for very slowly decreasing functions \cite{korevaarbook} which only requires knowledge of the boundary behavior of the Laplace transform near the point $s=0$. We also give a finite form version of Theorem \ref{K-ABth} for bounded functions, which is applicable when information about the Laplace transform is available just on a specific boundary line segment.  Furthermore, we shall provide in Section \ref{section tauberians Laplace} a generalization of the Wiener-Ikehara theorem where boundary singularities are allowed; this result extends Korevaar's distributional version of the Wiener-Ikehara theorem from \cite{korevaar2005}.

As is well known, one-sided Tauberian conditions usually demand a more delicate treatment than two-sided ones. Our main technical tool in this respect is boundedness theorems for Laplace transforms of boundedly decreasing functions with local pseudomeasure boundary behavior in a neighborhood of $s=0$; such boundedness results are discussed in Section \ref{section boundedness theorem}.  We mention that a special case of Theorem \ref{tbth1} was stated by Korevaar in \cite[Prop.~III.10.2, p.~143]{korevaarbook}; however, his proof contains mistakes (cf. Remark \ref{tbrk1} below).

Note that unlike (\ref{tintroeq3}) we do not require any uniformity assumptions on the bounds (\ref{tintroeq6}) for $t$ in the exceptional set $E$. The elimination of the uniformity condition will be achieved with the aid of Romanovski's lemma, a simple but powerful topological lemma originally devised for removing transfinite induction arguments in the construction of the Denjoy integral \cite{Romanovski}, and that usually has very interesting applications in analysis when combined with the Baire theorem \cite{estrada-vindas2010R, estrada-vindas2012GInt,gordonbook}. 

The investigation of singular boundary sets in Tauberian theorems such as Theorem \ref{tintroeq3}(ii) has led us to a characterization of local pseudofunctions, which  we discuss in Section \ref{section pseudofunctions}. Once this characterization is established, the Tauberian theorems from Section \ref{section tauberians Laplace} are shown via simple arguments in combination with the boundedness theorems from Section \ref{section boundedness theorem}.

Section \ref{section power series} is devoted to Tauberian theorems for power series that generalize results by Katznelson and Tzafriri \cite{katznelson}, Allan, O'Farrell, and Ransford \cite{a-o-r}, and Korevaar \cite{korevaarbook}.

Finally, we mention that we state all of our results for scalar-valued functions, but in most cases one can readily verify that analogous versions are also valid for functions with values in Banach spaces if the one-sided Tauberian conditions are replaced by their two-sided counterparts; we therefore leave the formulations of such generalizations to the reader.

\section{Preliminaries}
\label{section preli}
We collect in this section several useful notions that play a role in the article.
\subsection{Distributions and Fourier transform}
We shall make extensive use of standard Schwartz distribution calculus in our manipulations. Background material on distribution theory and Fourier transforms can be found in many classical textbooks, e.g. \cite{bremermann, hormander1990, vladimirov}; see \cite{p-s-v,vladimirov-d-z1} for asymptotic analysis and Tauberian theorems for generalized functions. 

If $U\subseteq\mathbb{R}$ is open, $\mathcal{D}(U)$ is the space of all smooth functions with compact support in $U$; its topological dual $\mathcal{D}'(U)$ is the space of distributions on $U$.
The standard Schwartz test function space of rapidly decreasing functions is denoted as usual by $\mathcal{S}(\mathbb{R})$, while $\mathcal{S}'(\mathbb{R})$ stands for the space of tempered distributions. The dual pairing between a distribution $f$ and a test function $\varphi$ is denoted as $\langle f, \varphi\rangle$, or  as $\langle f(x), \varphi(x)\rangle$ with the use of a dummy variable of evaluation. Locally integrable functions are regarded as distributions via $\langle f(x),\varphi(x)\rangle=\int_{-\infty}^{\infty}f(x)\varphi(x)\mathrm{d}x$. 

We fix the constants in the Fourier transform as
$\hat{\varphi}(t)=\mathcal{F}\{\varphi;t\}=\int_{-\infty}^{\infty}e^{-itx}\varphi(x)\:\mathrm{d}x.$ Naturally, the Fourier transform is well defined on $\mathcal{S}'(\mathbb{R})$ via duality, that is, the Fourier transform of $f\in\mathcal{S}'(\mathbb{R})$ is the tempered distribution $\hat{f}$ determined by $\langle \hat{f}(t),\varphi(t)\rangle=\langle f(x),\hat{\varphi}(x)\rangle$.
If $f\in\mathcal{S}'(\mathbb{R})$ has support in $[0,\infty)$, its Laplace transform is
$\mathcal{L}\left\{f;s\right\}=\left\langle f(u),e^{-su}\right\rangle,$ analytic on $\Re e\:s>0$, and its Fourier transform $\hat{f}$ is the distributional boundary value of $\mathcal{L}\left\{f;s\right\}$ on $\Re e\:s=0$ (see Subsection \ref{subsection dvb}). See \cite{vladimirov,zemanian} for complete accounts on Laplace transforms of distributions.

\subsection{Local pseudofunctions and pseudomeasures} Pseudofunctions and pseudo\-measures arise in connection with various problems from harmonic analysis \cite{benedettobook,Katznelson2004}.
A tempered distribution $f\in\mathcal{S}'(\mathbb{R})$ is called a (global) \emph{pseudomeasure} if $\hat{f} \in L^{\infty}(\mathbb{R})$. If we additionally have $\lim_{|x|\to\infty}\hat{f}(x)=0$, we call $f$ a (global) \emph{pseudofunction}. Particular instances of pseudomeasures are any finite (complex Borel) measure on $\mathbb{R}$, while any element of $L^{1}(\mathbb{R})$ is a special case of a pseudofunction. Note that the space of all pseudomeasures $PM(\mathbb{R})$ is the dual of the Wiener algebra $A(\mathbb{R}):=\mathcal{F}(L^{1}(\mathbb{R}))$. The space of pseudofunctions is denoted as $PF(\mathbb{R})$. Notice also that $PM(\mathbb{R})$ and $PF(\mathbb{R})$ have a natural module structure over the Wiener algebra, if $f\in PM(\mathbb{R})$ ($f\in PF(\mathbb{R})$) and $g\in A(\mathbb{R})$, their multiplication $fg$ is the distribution determined in Fourier side as $2\pi \widehat{fg}= \hat{f}\ast \hat{g}$.

We say that a distribution $f$ is a \emph{local} pseudofunction at $x_0$ if the point possesses an open neighborhood where $f$ coincides with a pseudofunction, we employ the notation $f\in PF_{loc}(x_0)$. We then say that $f$ is a local pseudofunction on an open set $U$ if $f\in PF_{loc}(x_0)$ for every $x_0\in U$; we write $f\in PF_{loc}(U)$.  Likewise, one defines the spaces of local pseudomeasures $PM_{loc}(x_0)$ and $PM_{loc}(U)$ and the local Wiener algebra $A_{loc}(U)$. One can easily check that local pseudofunctions are characterized by a generalized Riemann-Lebesgue lemma \cite{korevaar2005}. A distribution
 $f$ is a local pseudofunction on $U$ if and only if $e^{iht}f(t)=o(1)$ as $\left|h\right|\to\infty$ in $\mathcal{D}'(U)$, that is, for each $\varphi\in\mathcal{D}(U)$
\begin{equation}
\label{eqRL}
\left\langle f(t),e^{iht}\varphi(t)\right\rangle=o(1), \quad |h|\to\infty .
\end{equation}
Indeed, $f\in PF_{loc}(U)$ if and only if $\varphi f\in PF(\mathbb{R})$, for each $\varphi\in\mathcal{D}(U)$, which is a restatement of (\ref{eqRL}). If we replace $o(1)$ by $O(1)$ in (\ref{eqRL}), we obtain a characterization of local pseudomeasures.  Naturally, every $f\in L^{1}_{loc}(U)$ is an example of a local pseudofunction; in particular, every continuous, smooth, or real analytic function is a local pseudofunction. Furthermore, any Radon measure on $U$ is an instance of a local pseudomeasure.  

Let us also point out that the elements of $A_{loc}(U)$ are multipliers for $PF_{loc}(U)$ and $PM_{loc}(U)$. Since every smooth function belongs locally to the Wiener algebra, the $C^{\infty}$-functions are multipliers for local pseudofunctions and pseudomeasures. 

\subsection{Boundary values of analytic functions}
\label{subsection dvb}
Let $F(s)$ be analytic on the half-plane $\Re e\:s>\alpha$. We say that $F$ has distributional boundary values  on the open set $\alpha+iU$ of the boundary line $\Re e\:s=\alpha$ if $F(\sigma+it)$ tends to a distribution $f\in\mathcal{D}'(U)$ as $\sigma\to\alpha^{+}$, that is, if 
\begin{equation*}
\lim_{\sigma\to\alpha^{+}}\int_{-\infty}^{\infty}F(\sigma+it)\varphi(t)\mathrm{d}t=\left\langle f(t),\varphi(t)\right\rangle\ , \quad \mbox{for each } \varphi\in\mathcal{D}(U).
\end{equation*}

Analytic functions admitting distributional boundary values can be characterized in a very precise fashion via bounds near the boundary. One can show \cite[pp. 63--66]{hormander1990} that $F(s)$ has distributional boundary values on $\alpha+iU$ if for a fixed $\sigma_{0}>\alpha$ and for each bounded open $U'\subset U$  there are $N=N_{U'}$ and $M=M_{U'}$ such that 
$$
|F(\sigma+it)|\leq \frac{M}{(\sigma-\alpha)^{N}}\:, \quad \sigma+it\in (\alpha,\sigma_{0}]+iU',
$$
which is a result that goes back to the work of K\"{o}the. We refer to the textbooks \cite{bremermann,PilipovicK,C-M} for further details on boundary values and generalized functions; see also the article \cite{d-p-v} for recent results. 

Finally, we say that $F$ has \emph{local} pseudofunction (local pseudomeasure) boundary behavior on $\alpha+iU$ if it has distributional boundary values on this boundary set  
and the boundary distribution $f\in PF_{loc}(U)$ ($f\in PM_{loc}(U)$). The meaning of having pseudofunction (pseudomeasure) boundary behavior at the boundary point $\alpha+it_0$ is $f\in PF_{loc}(t_0)$ ($f\in PM_{loc}(t_0)$), i.e., $F$ has such local boundary behavior on a open line boundary segment containing  $\alpha+it_0$. We emphasize again that $L^1_{loc}$-boundary behavior, continuous, or analytic extension are very special cases of local pseudofunction and pseudomeasure boundary behavior.

\section{Boundedness theorems}
\label{section boundedness theorem}
We prove in this section \emph{boundedness} Tauberian theorems for Laplace transforms involving local pseudomeasure boundary behavior. 

Our first result is a very important one, as the rest of the article is mostly built upon it. It extends early boundedness theorems by Karamata \cite[Satz II]{karamata1934} and Korevaar \cite[Prop.~III.10.2, p.~143]{korevaarbook}, which were obtained under continuous or $L^{1}_{loc}$-boundary behavior, respectively. Here we take the local boundary requirement of the Laplace transform to a minimum\footnote{Clearly (\ref{teq2}) implies that $\mathcal{L}\{\tau;s\}$ has local pseudomeasure boundary behavior on $\Re e\:s=0$.} by relaxing it to local pseudomeasure boundary behavior at $s=0$.

The next notion plays a key role as Tauberian condition for boundedness. We say that a 
real-valued 
function $\tau$ is \emph{boundedly decreasing} \cite{binghambook,revesz-roton} (with additive arguments) if
there is a $\delta>0$ such that
$$
\liminf_{x\to\infty} \inf_{h\in[0,\delta]}(\tau(x+h)-\tau(x))>-\infty,
$$
that is, if there are constants $\delta,x_0,M > 0$ such that 
\begin{equation}
\label{boundedly decreasing}
 \tau(x+h) - \tau(x) \geq -M, \quad \text{for } 0 \leq h \leq \delta \text{ and } x \geq x_0.
\end{equation}
Bounded decrease for a complex-valued function means that its real and imaginary parts are boundedly decreasing.

\begin{theorem} \label{tbth1} Let $\tau \in L^{1}_{loc}(\mathbb{R})$ vanish on $(-\infty,0)$ and have convergent Laplace transform
\begin{equation}
\label{eqL1}
\mathcal{L}\{\tau;s\}=\int_{0}^{\infty}\tau(x)e^{-sx}\mathrm{d}x \quad \mbox{for }\ \Re e \: s > 0.
\end{equation}
Suppose that one of the following two Tauberian conditions is satisfied:
\begin{itemize}
 \item [$(T.1)$] $\tau$ is boundedly decreasing.
 \item [$(T.2)$] There are $x_0\geq 0$ and $\beta\in\mathbb{R}$ such that $e^{\beta x}\tau(x)$ is  non-negative and non-decreasing on $[x_0,\infty)$.
\end{itemize}
If $\mathcal{L}\{\tau;s\}$ has pseudomeasure boundary behavior at $s = 0$, then
\begin{equation}
\label{teq2}
\tau(x) = O(1), \quad x \to \infty.
\end{equation}
\end{theorem}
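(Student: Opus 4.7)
The plan is to bootstrap from the local pseudomeasure information near $s=0$ to a pointwise bound on $\tau$ via a convolution-smoothing argument, and to use the one-sided Tauberian hypothesis to bridge the gap between averaged and pointwise estimates. By hypothesis there exist $\lambda>0$ and a local pseudomeasure $g\in PM_{loc}(-\lambda,\lambda)$ that arises as the boundary distribution of $\mathcal{L}\{\tau;\sigma+it\}$ as $\sigma\to 0^{+}$. By the characterization of local pseudomeasures recalled in Section \ref{section preli}, $\varphi g\in PM(\mathbb{R})$ for every $\varphi\in\mathcal{D}(-\lambda,\lambda)$, and hence $\widehat{\varphi g}\in L^{\infty}(\mathbb{R})$, which yields $|\langle g(t),\varphi(t)e^{itx}\rangle|=O(1)$ uniformly in $x\in\mathbb{R}$. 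I would fix such a $\varphi$ of autoconvolution type, namely $\varphi=\psi\ast\widetilde{\psi}$ with $\widetilde{\psi}(t)=\overline{\psi(-t)}$ and $\psi\in\mathcal{D}(-\lambda/2,\lambda/2)$, so that $\Phi(y):=\int\varphi(t)e^{ity}\mathrm{d}t=|\widehat{\psi}(y)|^{2}$ is a non-negative Schwartz function; by dilating $\psi$ I may additionally force $\Phi\ge c>0$ on a preassigned short interval $[-\delta,\delta]$ while making the tails of $\Phi$ as small as desired.

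The Fubini identity
\begin{equation*}
\int_{-\infty}^{\infty}\mathcal{L}\{\tau;\sigma+it\}\varphi(t)e^{itx}\mathrm{d}t=\int_{0}^{\infty}\tau(u)e^{-\sigma u}\Phi(x-u)\mathrm{d}u,\qquad\sigma>0,
\end{equation*}
valid because $\varphi$ has compact support and $\mathcal{L}\{\tau;s\}$ converges absolutely in $\Re s>0$, is the main bridge. Its left-hand side tends as $\sigma\to 0^{+}$ to $\langle g(t),\varphi(t)e^{itx}\rangle=O(1)$, so the first substantive task is to justify passing the limit through the right-hand side and obtain
\begin{equation*}
F(x):=\int_{0}^{\infty}\tau(u)\Phi(x-u)\mathrm{d}u=O(1),\qquad x\to\infty.
\end{equation*}
Under $(T.1)$, iterating the inequality \eqref{boundedly decreasing} yields a linear lower bound $\tau(u)\ge -A-Bu$; splitting $\tau=\tau^{+}-\tau^{-}$, monotone convergence handles the negative part (dominated by the linear bound) and a Fubini argument based on the convergence of the Laplace integral in $\Re s>0$ together with the Schwartz decay of $\Phi$ handles the positive part. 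Under $(T.2)$, the identity $\mathcal{L}\{\tau;s\}=\mathcal{L}\{e^{\beta\cdot}\tau(\cdot);s+\beta\}$ reduces the problem to a non-negative non-decreasing integrand after a translation of the boundary point, where monotone convergence applies directly.

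The second and more delicate step, which I expect to be the main obstacle, is to pass from the averaged bound $F(x)=O(1)$ to the pointwise bound $\tau(x)=O(1)$. Bounded decrease gives $\tau(u)\ge\tau(x)-M$ for $u\in[x,x+\delta]$ and, applied at $u$ rather than $x$, also $\tau(u)\le\tau(x)+M$ for $u\in[x-\delta,x]$. Combined with $\Phi\ge c$ on $[-\delta,\delta]$, the first estimate yields
\begin{equation*}
c\delta(\tau(x)-M)\le\int_{x}^{x+\delta}\tau(u)\Phi(x-u)\mathrm{d}u\le F(x)+\left|\int_{[0,\infty)\setminus[x,x+\delta]}\tau(u)\Phi(x-u)\mathrm{d}u\right|,
\end{equation*}
and the second estimate provides a symmetric lower bound for $\tau(x)$. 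The real crux is controlling the residual tail uniformly in $x$: since $\tau$ is only a priori linearly bounded and $\Phi$ decays only rapidly rather than being compactly supported, one cannot conclude boundedness of the tail integral naively. My plan here is a bootstrap that first extracts from the estimate a crude growth bound $\tau(x)=o(x)$, then reinjects it to control the tail on $[0,\infty)\setminus[x,x+\delta]$, exploiting the rescaling freedom $\psi(\cdot)\mapsto\psi(\cdot/A)$ to make the portion of $\Phi$ outside $[-\delta,\delta]$ as small as needed in $L^{1}$ against slowly growing weights, and iterates until one reaches $\tau(x)=O(1)$. This averaged-to-pointwise extraction is precisely the step flagged in the introduction as the one where the earlier proof in \emph{Korevaar's monograph} was flawed, and I expect it to require the most care.
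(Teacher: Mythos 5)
Your first step coincides with the paper's: testing the boundary pseudomeasure against $\varphi(t)e^{iht}$ with a nonnegative kernel $\Phi\in\mathcal{F}(\mathcal{D}(-\lambda,\lambda))$ and passing the limit $\sigma\to0^{+}$ through the Laplace side via the auxiliary nonnegative function $\tau(u)+M(u+1)$ (your $\tau^{\pm}$ splitting amounts to the same; note only that absolute convergence of $\mathcal{L}\{\tau;s\}$ is not a hypothesis and must itself be deduced from the linear lower bound under $(T.1)$). The genuine gap is in the step you yourself flag as the crux, and your proposed bootstrap does not close it. The obstruction in passing from $\int_{0}^{\infty}\tau(u)\Phi(x-u)\,\mathrm{d}u=O(1)$ to $\tau(x)=O(1)$ is not the far tail of $\Phi$ against polynomial growth of $\tau$ (rapid decay handles that); it is that a putative huge positive value $\tau(y)$ could be compensated in the average by a comparably huge \emph{negative} excursion of $\tau$ at bounded distance from $y$, where $\Phi$ is of order one. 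Controlling this requires a \emph{lower} bound on $\tau$ near the spike, and bounded decrease only propagates lower bounds forward from earlier values, so no two-sided growth estimate fed back into your window inequality improves itself: with the available a priori information ($\tau(u)\geq-M(u+1)$ and $\tau(u)=O(u)$ -- and the pseudomeasure hypothesis only yields $O(x)$, not the $o(x)$ you assert, since the convolution averages of $\tau(u)+M(u+1)$ grow like $Mh$), each pass through the inequality returns a bound of the same linear order with a non-contracting constant, so the iteration never reaches $O(1)$.

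What is actually needed, and what the paper supplies, is a global extremal argument rather than a growth bootstrap. Using the $O(x)$ bound, the paper proves (Claim \ref{tclaim1}) that an unbounded-above $\tau$ would admit a spike $y$ with $\tau(y)\geq R$ that is maximal in a quantified sense ($\tau(x)<2\tau(y)$ for $x\leq y$ and $\tau(x)\leq\tau(y)(x+X-y)^{2}$ for $x\geq y$), via a nested-sets recursion. Then two applications of the convolution average are played against each other: the average centered to the right of $y$ (at $h=X+y$), together with bounded decrease, forces a value $\tau(t)\leq-3\tau(y)$ at some $t<y$; the average centered to the left of $t$ (at $h=t-X$), again with bounded decrease, then forces a value $\tau(u)\geq 2\tau(y)(u-t+X)^{2}$, contradicting the maximality of $y$. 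This sign-chasing contradiction, not a rescaling of the kernel, is what rules out compensation between large positive and large negative parts; your proposal is missing an idea of this kind (the lower bound in \eqref{teq2}, by contrast, does follow easily once the global upper bound is known, as in the paper's final step). For $(T.2)$ your reduction is fine, since nonnegativity makes the off-window contribution harmless.
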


\begin{proof} We show the theorem under the Tauberian hypotheses $(T.1)$ and $(T.2)$ separately. Set $F(s):=\mathcal{L}\{\tau;s\}$. Let $i(-\lambda,\lambda)$ be an open line segment sufficiently close to $s=0$ where the local pseudomeasure boundary behavior of $F$ is fulfilled. We may assume that $\tau$ is real-valued, because both $2\mathcal{L}\{\Re e\:\tau;s\}= F(s)+\overline{F(\overline{s})}$ and $2i\mathcal{L}\{\Im m\:\tau;s\}= F(s)-\overline{F(\overline{s})}$ have local pseudomeasure boundary behavior on $i(-\lambda,\lambda)$.

\smallskip

\textit{The Tauberian condition $(T.1)$.} Note that, by iterating the inequality (\ref{boundedly decreasing}) and enlarging the constant $M$ if necessary, we may suppose that 
\begin{equation}  
\label{eqcond1bis}
\tau(x+h) - \tau(x) > -M(h+1)
\end{equation} 
for $x > x_0$ and $h>0$. Since modifying $\tau$ on a bounded interval does not affect the local pseudomeasure behavior (indeed, the Laplace transform of a compactly supported function is entire), we may actually assume that (\ref{eqcond1bis}) holds for all $x,h > 0$. By adding a positive constant to $\tau$, we may also assume that $\tau(0)\geq 0$.  We divide the rest of the proof into four main steps.

\medskip

\emph{Step 1.} The first step in the proof is to translate the local pseudomeasure boundary behavior hypothesis into a convolution average condition for $\tau$. We show that  
\begin{equation} \label{tpeq1}
\int_{-\infty}^{\infty} \tau(x+h)\psi(x)dx = O(1),
\end{equation}
for all non-negative $\psi\in \mathcal{F}(\mathcal{D}((-\lambda,\lambda))$.

 For it, set
\begin{equation}
\label{eqAuxFunction1}
g(x) :=  \tau(x)+ M(x+1) \quad  \mbox{for }\: x > 0,\:  \mbox{ and }\: 0 \: \mbox{ elsewhere}.
\end{equation}
In view of (\ref{eqcond1bis}) and $\tau(0)\geq 0$, we have that $g$ is a positive function. Clearly $\tau(x)e^{-\sigma x}\in \mathcal{S}'(\mathbb{R}_x)$, for each $\sigma>0$. Let $\psi\in\mathcal{S}(\mathbb{R})$ be a non-negative test function  whose Fourier transform has support in $(-\lambda,\lambda)$. 
 By the monotone convergence theorem, the relation $\mathcal{L}\{\tau; \sigma + it\} = \mathcal{F}\{\tau e^{-\sigma \cdot};t\}$, which holds in $\mathcal{S}'(\mathbb{R})$, and the fact that $F(s)$ has distributional boundary values in $i(-\lambda,\lambda)$,  we obtain
\begin{align*}
\int^{\infty}_{-\infty} g(x+h) \psi(x) \mathrm{d}x& = \lim_{\sigma \rightarrow 0^{+}} \int^{\infty}_{0} g(x) \psi(x-h) e^{-\sigma x} \mathrm{d}x \\
& = \lim_{\sigma \rightarrow 0^{+}} \frac{1}{2\pi}\left\langle \mathcal{L}\{\tau; \sigma + it\}, e^{iht}\hat{\psi}(-t)\right\rangle +M\int^{\infty}_{-h} (x+1+h) \psi(x) \mathrm{d}x
\\
&
=  \frac{1}{2\pi}\left\langle F(it),e^{iht}\hat{\psi}(-t)\right\rangle+ M\int^{\infty}_{-h} (x+1+h) \psi(x) \mathrm{d}x.
\end{align*}
Subtracting the very last term from both sides of the above equality and using the fact that $\left\langle F(it),e^{iht}\hat{\psi}(-t)\right\rangle=O(1)$, which follows from the local pseudomeasure boundary behavior of $F$, we have proved that (\ref{tpeq1}) holds 
for all non-negative $\psi\in \mathcal{F}(\mathcal{D}((-\lambda,\lambda))$. 

From now on, we fix in the convolution average estimate (\ref{tpeq1}) a non-negative even function $\psi \in \mathcal{F}(\mathcal{D}(-\lambda,\lambda))$ with 
$$\int_{-\infty}^{\infty} \psi(x)\mathrm{d}x = 1.$$

 \medskip
 
\emph{Step 2.} The second step consists in establishing the auxiliary estimate
\begin{equation}\label{eqAuxTau1}\tau(x)=O(x).
\end{equation}
To show this bound, we employ again the auxiliary function $g$ defined in $(\ref{eqAuxFunction1})$. We have that $g$ is positive and satisfies the rough average bound 
$$
\displaystyle\int_{-\infty}^{\infty} g(x+h)\psi(x)dx = O(h),
$$
due to (\ref{tpeq1}). Notice $g(x+h)-g(h)\geq -M$, it then follows that
\begin{align*}
 0\leq g(h) & = 2 \int^{\infty}_{0} g(h) \psi(x) \mathrm{d}x \leq 2 \int^{\infty}_{0} (g(x+h)+M) \psi(x) \mathrm{d}x 
\\ &  \leq M+ 2 \int^{\infty}_{-\infty} g(x+h) \psi(x) \mathrm{d}x= O(h),
\end{align*}
and thus also $\tau(h) = O(h)$. 

\medskip

\emph{Step 3.} In this crucial step we prove that  $\tau$ is bounded from above by contradiction. Suppose then that $\tau$ is not bounded from above. Let $X>2$ be so large that $\int^{X}_{-X} \psi(x)\mathrm{d}x \geq 3/4$ and $\int_{X}^{\infty}x^{2}\psi(x)\mathrm{d}x< 1$. Choose $C\geq1$ witnessing the $O$-constant in (\ref{tpeq1}), namely, 
\begin{equation}
\label{eq1proofbdd}
\left|\int_{-\infty}^{\infty} \tau(x+h) \psi(x)\mathrm{d}x\right| \leq C, \quad \forall h\geq 0.
\end{equation}
Let $R$ be arbitrarily large; in fact, we assume that
\begin{equation}
\label{eqRchoice}
R> 4C+4M\left(X+1+\int_{X}^{\infty}x\psi(x)\mathrm{d}x\right). 
\end{equation}
A key point to generate a contradiction is to show that unboundedness from above of $\tau$ forces the existence of a large value $y$ satisfying the maximality assumptions from the ensuing claim:

\begin{claim}
\label{tclaim1} If $\tau$ is unbounded from above, there is $y$ such that $\tau(y) \geq R$, $\tau(x) < 2\tau(y)$ when $x \leq y$ and $\tau(x) \leq \tau(y)(x+X-y)^{2}$ whenever $x \geq y$.
\end{claim}
Indeed, by the assumption that $\tau$ is not bounded from above, we may choose $y_{0}$ such that $\tau(y_{0}) \geq R$. Suppose that $y_{0}$ does not satisfy the requirements of the claim. This means the following set is non-empty,
\begin{equation*}
V_{0} :=  \{x \mid \tau(x) \geq 2\tau(y_{0}) \text{ and }  x \leq y_{0}\} \cup  \{x \mid \tau(x) \geq \tau(y_{0})(x+X-y_{0})^{2} \text{ and } x \geq y_{0} \}.
\end{equation*}
Since $\tau(x) = O(x)$, we have that $V_{0}$ is contained in some bounded interval. Let us choose $y_{1} \in V_{0}$. If $y_1$ does not satisfy the properties of the claim, we may define $V_{1}$ in a similar fashion. Iterating the procedure, we either find our $y$ or can construct recursively a sequence of points $y_{n+1}\in V_n$, where the sets
$$
V_{n} :=  \{x \mid \tau(x) \geq 2\tau(y_{n}) \text{ and }  x \leq y_{n}\} \cup  \{x \mid \tau(x) \geq \tau(y_{n})(x+X-y_{n})^{2} \text{ and } x \geq y_{n} \}
$$ 
are non-void.
 We will show that this procedure breaks down after finitely many steps, i.e., some $V_{n}$ must be empty, which would show Claim \ref{tclaim1}. It suffices to prove that $V_{1} \subseteq V_{0}$. In fact, it would then follow that $\dots\subseteq V_{n} \subseteq V_{n-1} \subseteq \dots \subseteq V_{0}$, thus all $V_{n}$ would live in the same bounded interval. But on the other hand if no $V_{n}$ would be empty we would obtain that $\tau (x_n)\geq 2^{n}R$; consequently $\tau$ would be  unbounded on this bounded interval, which contradicts (\ref{eqAuxTau1}). It remains thus to show $V_{1} \subseteq V_{0}$. If $y_{1} \leq y_{0}$, this is very easy to check. If $y_{1} \geq y_{0}$, the verification for $x \leq y_{1}$ is still easy. We thus assume that  $x \geq y_{1}\geq y_0$. We have to prove that $\tau(x) \geq \tau(y_{0})(x+X-y_{0})^{2}$ provided that $x \in V_{1}$. We have
\begin{align*}
 \tau(x) &\geq \tau(y_{1})(x+X-y_{1})^{2} 
 \geq \tau(y_{0})(y_{1}+X-y_{0})^{2}(x+X-y_{1})^{2}
  \\&
  \geq \tau(y_{0})(x+2X-y_{0})^{2} \geq \tau(y_{0})(x+X-y_{0})^{2},
\end{align*}
where we have used the inequality $a^{2}b^{2} \geq (a+b)^{2}$ which certainly holds for $a,b\geq 2$. This concludes the proof of the claim. 

\smallskip

 We now use (\ref{tpeq1}) and Claim \ref{tclaim1} to produce the desired contradiction and to conclude that $\tau$ is bounded from above. Let $y$ be as in Claim \ref{tclaim1}. 
We set $h = X + y$ in (\ref{eq1proofbdd}) and we are going to split the integral $\int_{-\infty}^{\infty} \tau(x+X+y) \psi(x) \mathrm{d}x$ in two parts. By the choice of $R$ (cf. (\ref{eqRchoice})) and (\ref{eqcond1bis}) (with $h=x+X$ and $y$ instead of $x$), the contribution on the interval $[-X,\infty]$ is larger than
\begin{align*}
\int_{-X}^{\infty} \tau(x+X+y) \psi(x) \mathrm{d}x&\geq \frac{3\tau(y)}{4}- M\int_{-X}^{\infty} (x+X+1) \psi(x) \mathrm{d}x
\\
&
\geq \frac{3\tau(y)}{4}-\frac{R}{4}+C
\\
&
\geq
 \frac{\tau(y)}{2}+C.
\end{align*}
Combining this inequality with the upper bound from (\ref{eq1proofbdd}), we obtain
$$
\tau(y)\leq -2\int^{-X}_{-\infty}\tau(x+X+y)\psi(x)\mathrm{d}x\leq 2\sup_{t\in[0,y]}(-\tau(t))\int_{X}^{\infty}\psi(x)\mathrm{d}x\leq \frac{1}{4}\sup_{t\in[0,y]}(-\tau(t)).
$$
In particular, we conclude that there exists $t<y$ which is ``very negative'' with respect to $-\tau(y)$, that is, $\tau(t)\leq -3\tau(y)$.

 Applying a similar argument with $h=t-X$, we derive
$$
\int_{-\infty}^{X}\tau(x+t-X)\psi(x)\mathrm{d}x\leq \frac{R}{4}-C-\frac{9}{4}\tau(y)\leq -C-2\tau(y),
$$
which, together with the lower bound in (\ref{eq1proofbdd}) for $h=t-X$, yields
$$
\tau(y)\leq \frac{1}{2}\int_{X}^{\infty}\tau(x+t-X) \psi(x)\mathrm{d}x\leq \frac{1}{2} \sup_{u\in[t,\infty)} \frac{\tau(u)}{(u-t+X)^{2}} \int_{X}^{\infty}x^{2}\psi(x)\mathrm{d}x.
$$

We have therefore found a ``very positive'' value $\tau(u)$ for $u > t$, i.e., one where $\tau$ satisfies $\tau(u) \geq 2 \tau(y) (u-t +X)^{2}$. This $u$ contradicts the maximality assumptions on $y$ from Claim \ref{tclaim1} (in both cases $u \leq y$ and $u \geq y$). So $\tau$ is bounded from above. 
 
\medskip

\emph{Step 4.} Finally, we establish the lower bound with the aid of the upper one. Find $C'$ such that $\tau(x)\leq C'$ for all $x$. Using that $\psi$ is even and non-negative and the lower bound in (\ref{eq1proofbdd}), we then have
\begin{align*}
-C&\leq \int_{-h}^{\infty} \tau(x+h)\psi(x)\mathrm{d}x\leq \frac{C'}{2}+\int_{-h}^{0} \tau(x+h)\psi(x)\mathrm{d}x
\\
&
= \frac{C'}{2}+\frac{\tau(h)}{2}+ \int_{0}^{h}(\tau(h-x)-\tau(h))\psi(x)\mathrm{d}x
\\
&
\leq \frac{C'}{2}+\frac{\tau(h)}{2}+ M\int_{0}^{\infty}(x+1)\psi(x)\mathrm{d}x,
\end{align*}
which yields the lower bound. This concludes the proof of the theorem under $(T.1)$.

\bigskip

\emph{The Tauberian condition $(T.2)$}. The proof under the Tauberian condition $(T.2)$ is much simpler. We may assume that $\beta>0$; otherwise, $\tau$ is non-decreasing and in particular boundedly decreasing. Using the positivity of $\tau$, one can establish as above (\ref{tpeq1}) for all non-negative $\psi\in\mathcal{S}(\mathbb{R})$ with $\operatorname*{supp} \hat{\psi} \subset (-\lambda,\lambda)$. As before, we choose $\psi$ with $\int_{-\infty}^{\infty}\psi(x)\mathrm{d}x=1$. Set $C=\int_{0}^{\infty}\psi(x)e^{-\beta x}\mathrm{d}x>0$. Since $\tau(h)\leq e^{\beta x}\tau(x+h)$ for $x\geq 0$, we obtain

$$
\tau(h)=\frac{1}{C}\int_{0}^{\infty}\tau(h)e^{-\beta x} \psi(x)\mathrm{d}x\leq \frac{1}{C}\int_{-\infty}^{\infty} \tau(x+h)\psi(x)\mathrm{d}x=O(1).
$$

\end{proof}
\begin{remark}\label{tbrk1} Korevaar states in \cite[Prop.~III.10.2, p.~143]{korevaarbook} a weaker version of Theorem under $(T.1)$ for Laplace transforms with $L^{1}_{loc}$-boundary behavior on the whole line $\Re e\:s=0$; however, his proof turns out to have a major gap. In fact, Korevaar's argument is based on the analysis of $\beta_{x}:=\sup_{t>0}e^{-xt}|\tau(t)|$, $x>0$. He further reasons by contradiction and states for his analysis that he may assume that $\beta_{x}=\sup_{t>0}e^{-xt}\tau(t)$; however, the case $\beta_{x}=\sup_{t>0}-e^{-xt}\tau(t)$ cannot be treated analogously, being actually the most technically troublesome one (compare with our proof above and Karamata's method from \cite{karamata1934}). 
\end{remark}
\begin{remark}\label{tbrk2} The point $s=0$ plays an essential role in Theorem \ref{tbth1}, in the sense that, in general, pseudomeasure boundary behavior of the Laplace transform in a neighborhood of any other point $it_{0}\neq0$ of $\Re e\: s=0$ does not guarantee boundedness of $\tau$. A simple example is provided by $\tau(x)=x$, $x>0$, whose Laplace transform $1/s^{2}$ has local pseudomeasure boundary behavior on $i(\mathbb{R}\setminus\left\{0\right\})$.
\end{remark}

The Tauberian condition
\begin{equation}
\label{teq2ingham}
\limsup_{x\to\infty}e^{-\theta x}\left|\int_{0^{-}}^{x}e^{\theta u}\mathrm{d}\tau(u)\right|<\infty \quad\quad (\theta>0),
\end{equation}
where $\tau$ is assumed to be of local bounded variation,
appeared in Ingham's work \cite[Thm.~I]{ingham1935} in connection to his Fatou-Riesz type theorem for Laplace transforms.
\begin{corollary}
\label{tbc1}  Let $\tau$ vanish on $(-\infty,0)$, be of local bounded variation on $[0,\infty)$, and have convergent Laplace transform $(\ref{eqL1})$ admitting 
pseudomeasure boundary behavior at the point $s = 0$. Suppose that there is $\theta>0$ such that
$$
T_{\theta}(x):= e^{-\theta x}\int_{0^{-}}^{x}e^{\theta u}\mathrm{d}\tau(u)\quad \mbox{is bounded from below.}
$$
Then,
\begin{equation}
\label{tbeq10}
\tau(x) = T_{\theta}(x)+ O(1), \quad x \to \infty.
\end{equation}
In particular, $\tau$ is bounded if $(\ref{teq2ingham})$ holds.

\end{corollary}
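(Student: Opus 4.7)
The plan is to reduce the statement to Theorem \ref{tbth1} by analyzing the auxiliary function $\rho(x):=\tau(x)-T_\theta(x)$. First I would invoke Riemann--Stieltjes integration by parts, which is legitimate since $\tau$ is of local bounded variation on $[0,\infty)$ and vanishes on $(-\infty,0)$, to rewrite
\[
T_\theta(x)=\tau(x)-\theta e^{-\theta x}\int_0^x e^{\theta u}\tau(u)\,\mathrm{d}u,\qquad \rho(x)=\theta\int_0^x e^{-\theta(x-u)}\tau(u)\,\mathrm{d}u.
\]
This already shows $\rho$ vanishes on $(-\infty,0)$, lies in $L^1_{loc}(\mathbb{R})$, and is absolutely continuous on $[0,\infty)$. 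Differentiating the convolution representation gives the key identity $\rho'(x)=\theta\tau(x)-\theta\rho(x)=\theta T_\theta(x)$ almost everywhere, which is what drives the whole argument.

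Next I would verify the hypotheses of Theorem \ref{tbth1} under condition $(T.1)$ for $\rho$. The assumption that $T_\theta\geq -K$ together with the identity $\rho'=\theta T_\theta$ a.e. yields
\[
\rho(x+h)-\rho(x)=\int_x^{x+h}\rho'(u)\,\mathrm{d}u\geq -\theta K h\qquad(h\geq 0),
\]
so $\rho$ is boundedly decreasing. For the boundary hypothesis, I would note that the assumed (improper) convergence of $\mathcal{L}\{\tau;s\}$ on $\Re e\:s>0$ implies, by a standard Abel argument, absolute convergence on the same half-plane; hence Fubini applied to the convolution representation of $\rho$ gives
\[
\mathcal{L}\{\rho;s\}=\frac{\theta}{s+\theta}\,\mathcal{L}\{\tau;s\},\qquad \Re e\:s>0.
\]
Since $\theta/(s+\theta)$ is analytic at $s=0$, its restriction to the imaginary axis is smooth near $t=0$ and therefore belongs to $A_{loc}$ there. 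By the module structure recorded in Section \ref{section preli}, multiplication by such a factor preserves local pseudomeasure boundary behavior, so $\mathcal{L}\{\rho;s\}$ inherits pseudomeasure boundary behavior at $s=0$ from $\mathcal{L}\{\tau;s\}$.

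Theorem \ref{tbth1}$(T.1)$ applied to $\rho$ then gives $\rho(x)=O(1)$, which is exactly \eqref{tbeq10}. The final assertion is immediate: if \eqref{teq2ingham} holds then $T_\theta$ is itself bounded, so $\tau=T_\theta+O(1)$ is bounded as well. The only genuine insight in the plan is the identity $\rho'=\theta T_\theta$, which converts a one-sided bound on $T_\theta$ into bounded decrease of $\rho$; once this is in hand, the remaining steps are routine bookkeeping, and I anticipate no substantive obstacle.
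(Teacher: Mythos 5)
Your construction is, up to the constant $\theta$, identical to the paper's: since $\rho=\tau-T_{\theta}=\theta\int_{0}^{x}T_{\theta}(u)\mathrm{d}u$, your identity $\rho'=\theta T_{\theta}$ is just a rephrasing of $(\ref{tbeq12})$, and the bounded-decrease verification, the multiplier $\theta/(s+\theta)$, and the appeal to Theorem \ref{tbth1} under $(T.1)$ all coincide with the paper's proof. The one step that does not hold up as written is your justification of $\mathcal{L}\{\rho;s\}=\frac{\theta}{s+\theta}\mathcal{L}\{\tau;s\}$. It is simply false that improper convergence of $\mathcal{L}\{\tau;s\}$ on $\Re e\:s>0$ implies absolute convergence there: the abscissa of absolute convergence may strictly exceed the abscissa of convergence, and it can even happen that the Laplace integral converges for every $\Re e\:s>0$ while converging absolutely for no $s$ at all (take $\tau(x)=\frac{d}{dx}\bigl(e^{-x}\cos (e^{e^{x}})\bigr)$ on $[0,\infty)$; its primitive is bounded, so the integral converges on $\Re e\:s>0$ by Abel summation, yet $\int_{0}^{\infty}|\tau(x)|e^{-\sigma x}\mathrm{d}x=\infty$ for every $\sigma$). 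The hypotheses of the corollary give no pointwise or $L^{1}$-control on $|\tau|$, so the unrestricted Fubini argument you invoke is unavailable, and with it both the transform identity and the convergence of $\mathcal{L}\{\rho;s\}$ (which you also need in order to apply Theorem \ref{tbth1}) are left unproved.

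The gap is genuine but routine to close. Apply Fubini only on the bounded triangle $\{0\leq u\leq x\leq X\}$, where local integrability suffices, to obtain
\[
\int_{0}^{X}e^{-sx}\rho(x)\,\mathrm{d}x=\frac{\theta}{s+\theta}\int_{0}^{X}\tau(u)e^{-su}\,\mathrm{d}u-\frac{e^{-sX}\rho(X)}{s+\theta}.
\]
Convergence of $\mathcal{L}\{\tau;\cdot\}$ at each $\epsilon>0$ gives, by Abel summation, $\int_{0}^{X}e^{\theta u}\tau(u)\mathrm{d}u=O(e^{(\theta+\epsilon)X})$, hence $\rho(X)=O(e^{\epsilon X})$ for every $\epsilon>0$ and the boundary term tends to $0$ whenever $\Re e\:s>0$. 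Letting $X\to\infty$ yields simultaneously the convergence of $\mathcal{L}\{\rho;s\}$ on $\Re e\:s>0$ and the identity $\mathcal{L}\{\rho;s\}=\frac{\theta}{s+\theta}\mathcal{L}\{\tau;s\}$; alternatively one can argue by Stieltjes integration by parts, which is the route behind the paper's chain $\mathcal{L}\{T_{\theta}^{(-1)};s\}=\mathcal{L}\{T_{\theta};s\}/s=\mathcal{L}\{\tau;s\}/(s+\theta)$. With this correction your argument is complete and agrees with the paper's proof, including the final observation that $(\ref{teq2ingham})$ makes $T_{\theta}$ bounded and hence $\tau$ bounded via $(\ref{tbeq10})$.
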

\begin{proof}  Noticing that
\begin{equation}
 \label{tbeq12}
 \tau(x)=T_{\theta}(x)+\theta\int_{0}^{x}T_{\theta}(u)\mathrm{d}u,
 \end{equation}
it is enough to show that $T^{(-1)}_{\theta}(x)=\int_{0}^{x}T_{\theta}(u)\mathrm{d}u$ is bounded, which would yield (\ref{tbeq10}). We have that 
\begin{equation}
\label{tbeq13}
\mathcal{L}\{T^{(-1)}_{\theta};s\}=\frac{\mathcal{L}\{T_{\theta};s\}}{s}=\frac{\mathcal{L}\{\tau;s\}}{s+\theta}. 
\end{equation}
The function $1/(s+\theta)$ is $C^{\infty}$ on $\Re e\: s=0$, and thus a multiplier for local pseudomeasures. Therefore, $\mathcal{L}\{T^{(-1)}_{\theta};s\}$ has pseudomeasure boundary behavior at $s = 0$. Since $T^{(-1)}_{\theta}$ is boundedly decreasing, we obtain $T^{(-1)}_{\theta}(x)=O(1)$ from Theorem \ref{tbth1}.
\end{proof}

 Theorem \ref{tbth1} can be further generalized if we notice that $(T.1)$ is invariant under addition and subtraction of \emph{boundedly oscillating} functions. We call a function $\tau$ boundedly oscillating if there is $\delta>0$ such that
\begin{equation}
\label{boeq1}
\limsup_{x\to\infty} \sup_{h\in[0,\delta]}|\tau(x+h)-\tau(x)|<\infty.
\end{equation}
For example, Ingham's condition (\ref{teq2ingham}) is a particular case of bounded oscillation (cf. (\ref{tbeq12})). Moreover, noticing that the property (\ref{boeq1}) is equivalent to $f=\exp\circ \tau \circ \log $ being $O$-regularly varying \cite[p.~65]{binghambook}, we obtain from the Karamata type representation theorem for the latter function class \cite[p.~74]{binghambook} that any (measurable) boundedly oscillating function $\tau$ can be written as
\begin{equation}
\label{repbo}
\tau(x)=\int_{0}^{x}g(y)\mathrm{d}y +O(1), \quad g\in L^{\infty}[0,\infty),
\end{equation}
for  $x\in[x_0,\infty)$, for some large enough $x_0$. Although we shall not use the following fact in the future, we point out that one can actually choose $g$ in (\ref{repbo}) enjoying much better properties:
\begin{proposition}
\label{proprbo} If $\tau$ is boundedly oscillating and measurable, then $(\ref{repbo})$ holds for some $g\in C^{\infty}(\mathbb{R})$ vanishing on $(-\infty,0]$ and satisfying $g^{(n)}\in L^{\infty}(\mathbb{R})$ for all $n\in\mathbb{N}$.
\begin{proof}
Bounded oscillation implies that $|\tau(x+h)-\tau(x)|<M (h+1)$ for some $M>0$, all $h\geq0$, and all sufficiently large $x$. We may assume that this inequality holds for all $x$ and that $\tau$ vanishes, say, on $(-\infty,1]$. Take a non-negative $\varphi\in\mathcal{D}(0,1)$ with $\int_{0}^{1}\varphi(x)\mathrm{d}x=1$. The $C^{\infty}$-function $f(x)=\int_{-\infty}^{\infty}\tau(x+y)\varphi(y)\mathrm{d}y$ has support in $(0,\infty)$, $f(x)=\tau(x)+O(1)$ and $f^{(n)}(x)\in L^{\infty}(\mathbb{R})$ for $n\geq1$. Thus $g=f'$ satisfies all requirements.
\end{proof}
\end{proposition}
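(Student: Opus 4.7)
The plan is to smooth $\tau$ by a convolution with a compactly supported bump function. First I would use the definition of bounded oscillation (\ref{boeq1}) and iterate it on intervals of length $\delta$ to upgrade the hypothesis to a global Lipschitz-type bound $|\tau(x+h)-\tau(x)|\leq M(h+1)$, valid for all $h\geq 0$ and all sufficiently large $x$. Since altering $\tau$ on a bounded interval changes it by $O(1)$, and modifying $g$ on a bounded set by a smooth correction supported in $(0,\infty)$ does not affect any of the required properties, I can assume without loss of generality that this inequality holds for every $x\in\mathbb{R}$ and that $\tau$ vanishes identically on $(-\infty,1]$.

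Next, I would fix a non-negative $\varphi\in\mathcal{D}(0,1)$ with $\int_{0}^{1}\varphi(y)\mathrm{d}y=1$ and set
\begin{equation*}
f(x)=\int_{-\infty}^{\infty}\tau(x+y)\varphi(y)\mathrm{d}y.
\end{equation*}
Because $\varphi$ is smooth and compactly supported, differentiation under the integral sign yields $f\in C^{\infty}(\mathbb{R})$. The support assumptions on $\tau$ and $\varphi$ force $f$ to vanish on $(-\infty,0]$. The approximation property $f(x)-\tau(x)=O(1)$ comes from rewriting
\begin{equation*}
f(x)-\tau(x)=\int_{0}^{1}\bigl(\tau(x+y)-\tau(x)\bigr)\varphi(y)\mathrm{d}y
\end{equation*}
and estimating by $M\int_{0}^{1}(y+1)\varphi(y)\mathrm{d}y$.

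The main obstacle is showing that $f^{(n)}\in L^{\infty}(\mathbb{R})$ for every $n\geq 1$, since a naive bound on $\int \tau(x+y)\varphi^{(n)}(y)\mathrm{d}y$ fails (recall $\tau$ may grow linearly). The crucial observation is that $\varphi^{(n)}$ has mean zero for $n\geq 1$, because $\varphi$ is compactly supported. Hence one can subtract the constant $\tau(x)$ inside the integral and write
\begin{equation*}
f^{(n)}(x)=\int_{0}^{1}\bigl(\tau(x+y)-\tau(x)\bigr)\varphi^{(n)}(y)\mathrm{d}y,
\end{equation*}
which is uniformly bounded in $x$ by $M\int_{0}^{1}(y+1)|\varphi^{(n)}(y)|\mathrm{d}y$. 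Finally, taking $g=f'$ produces a smooth function supported in $[0,\infty)$ with all derivatives bounded, and $\int_{0}^{x}g(y)\mathrm{d}y=f(x)-f(0)=f(x)=\tau(x)+O(1)$, which completes the proof.
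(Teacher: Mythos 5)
Your proposal is correct and follows essentially the same route as the paper: iterate the bounded-oscillation inequality to a global $|\tau(x+h)-\tau(x)|\leq M(h+1)$, normalize $\tau$ to vanish on $(-\infty,1]$, mollify with $\varphi\in\mathcal{D}(0,1)$, and take $g=f'$. The only addition is that you spell out the mean-zero trick $\int\varphi^{(n)}=0$ (for $n\geq 1$) behind the bound $f^{(n)}\in L^{\infty}$, which the paper leaves implicit; your displayed formula for $f^{(n)}$ is missing a harmless factor $(-1)^{n}$, which does not affect the estimate.
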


We have the ensuing extension of  Theorem \ref{tbth1}.

\begin{theorem}
\label{tbth2} Let $\tau \in L^{1}_{loc}(\mathbb{R})$ vanish on $(-\infty,0)$, have convergent Laplace transform $(\ref{eqL1})$, and be boundedly decreasing. Then, the function $\tau$ is boundedly oscillating if and only if there is $G(s)$ analytic on the intersection of $\Re e\:s>0$ with a (complex) neighborhood of $s=0$ such that
\begin{equation}\label{eq3extra}
\mathcal{L}\{\tau;s\}-\frac{G(s)}{s} \quad\mbox{and} \quad G(s)
\end{equation}
both admit pseudomeasure boundary behavior at $s=0$. 

Furthermore, $\tau$ has the asymptotic behavior $(\ref{repbo})$ with $g\in L^{\infty}(\mathbb{R})$ given in terms of the Fourier transform by the distribution
\begin{equation}
\label{tbeq14}
\hat{g}(t)=\lim_{\sigma\to 0^{+}}G(\sigma+it) \quad \mbox{ in }\mathcal{D}'(-\lambda,\lambda),
\end{equation}
for sufficiently small $\lambda>0$.
\end{theorem}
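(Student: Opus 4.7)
The forward direction is an immediate application of Proposition \ref{proprbo}: writing $\tau(x) = \int_{0}^{x} g(y)\,\mathrm{d}y + h(x)$ with $g \in L^{\infty}(\mathbb{R})$ vanishing on $(-\infty,0]$ and $h \in L^{\infty}(\mathbb{R})$, set $G(s) := \int_{0}^{\infty} g(y) e^{-sy}\,\mathrm{d}y$. As the Laplace transform of a bounded function supported on $[0,\infty)$, $G$ has pseudomeasure boundary behavior everywhere on $\Re e\,s=0$ with boundary distribution $\hat{g}$, and $\mathcal{L}\{\tau;s\} - G(s)/s = \mathcal{L}\{h;s\}$ also has pseudomeasure boundary behavior everywhere since $h \in L^{\infty}$; formula $(\ref{tbeq14})$ is built in.

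For the converse, my plan is to construct $g \in L^{\infty}(\mathbb{R})$ whose Fourier transform matches $g_0 := \lim_{\sigma\to 0^{+}} G(\sigma+i\cdot)$ on a subinterval $(-\lambda',\lambda')$ of $(-\lambda, \lambda)$, set $\tau_1(x) := \int_0^x g(y)\,\mathrm{d}y$, and apply Theorem \ref{tbth1} to $T := \tau - \tau_1$. This will give $T = O(1)$ and hence the asymptotic $(\ref{repbo})$, from which bounded oscillation of $\tau$ follows. Bounded decrease of $T$ is immediate from that of $\tau$ and the Lipschitz continuity of $\tau_1$ (since $g \in L^{\infty}$), and the Laplace transform splits as
\[
\mathcal{L}\{T;s\} = H(s) + \frac{G(s) - G_+(s)}{s}, \qquad G_+(s) := \int_0^{\infty} g(y) e^{-sy}\,\mathrm{d}y,
\]
so, since $H$ already has pseudomeasure boundary at $s=0$ by hypothesis, the remaining task is to arrange that $[G - G_+]/s$ also does.

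The crux is the choice of $g$. The naive $g := \mathcal{F}^{-1}(\varphi g_0)$ for a cutoff $\varphi \in \mathcal{D}(-\lambda,\lambda)$ equal to $1$ on $(-\lambda',\lambda')$ is not supported on $[0,\infty)$; on $(-\lambda',\lambda')$ the boundary distribution of $G - G_+$ then equals $\widehat{g\chi_{(-\infty,0)}}$, and the latter divided by $it$ need not be a local pseudomeasure near $t=0$, so the naive approach fails. The fix is to pick $g \in L^{\infty}[0,\infty)$ (extended by $0$ to $\mathbb{R}$) with $\hat{g} = g_0$ on $(-\lambda',\lambda')$: then $G_+ = \mathcal{L}\{g\}$ has boundary distribution $\hat{g} = g_0$ on this interval, the boundary distribution of $G - G_+$ vanishes there, and $[G - G_+]/s$ trivially admits pseudomeasure boundary behavior at $s=0$. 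The existence of such $g \in L^{\infty}[0,\infty)$ realizing the prescribed local pseudomeasure boundary value of the half-plane-analytic function $G$ is the technical heart of the argument; it rests on the analytic structure of $G$ on $\{\Re e\,s>0\}\cap B(0,r)$ combined with the pseudomeasure structure of $g_0$, obtained by a projection/extension construction that couples the two. With such $g$ in hand, Theorem \ref{tbth1} yields $T = O(1)$ and completes the proof of the converse, and the Fourier representation $(\ref{tbeq14})$ is automatic from the construction.
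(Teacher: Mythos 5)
Your forward implication is fine and is the same as the paper's. The converse, however, has a fatal gap: the construction you call the ``technical heart'' --- a function $g\in L^{\infty}[0,\infty)$ with $\hat{g}=g_{0}$ exactly on an interval around $0$, where $g_{0}:=\lim_{\sigma\to0^{+}}G(\sigma+i\cdot)$ --- is not proved, and in general no such $g$ exists. Take $G\equiv 1$ (this occurs, e.g., for $\tau=\chi_{[0,\infty)}$, where $\mathcal{L}\{\tau;s\}-G(s)/s=0$, so the hypotheses of the theorem hold). If some $g\in L^{\infty}[0,\infty)$ had $\hat{g}=1$ on $(-\lambda',\lambda')$, then the boundary values of $\mathcal{L}\{g;s\}$ from $\Re e\:s>0$ and of the constant function $1$ from $\Re e\:s<0$ would coincide on that segment; by the edge-of-the-wedge theorem and the identity theorem this forces $\mathcal{L}\{g;s\}\equiv 1$ on $\Re e\:s>0$, contradicting $|\mathcal{L}\{g;\sigma\}|\leq \|g\|_{L^{\infty}}/\sigma\to 0$. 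Exact matching of the prescribed boundary distribution by a bounded function supported on the half-line would force $G$ itself (up to an analytic error) to already be such a Laplace transform, which is precisely what is not given; no ``projection/extension construction'' can circumvent this.

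Moreover, the ``naive approach'' you discard is essentially the correct one, and it is the paper's argument; the ingredient you are missing is the edge-of-the-wedge theorem. Choose $g\in L^{\infty}(\mathbb{R})$ with $\hat{g}=g_{0}$ on $(-\lambda,\lambda)$ (this is exactly what local pseudomeasure boundary behavior of $G$ at $s=0$ provides) and split $g=g_{+}+g_{-}$ at the origin. On $i(-\lambda,\lambda)$ the boundary distribution of $G(s)-\mathcal{L}\{g_{+};s\}$ is $\widehat{g_{-}}$, which is also the boundary value of $\int_{-\infty}^{0}g(x)e^{-sx}\mathrm{d}x$, analytic on $\Re e\:s<0$. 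Hence $H(s):=G(s)-\mathcal{L}\{g_{+};s\}$ extends analytically to a full complex neighborhood of $s=0$, and
\[
\frac{G(s)-\mathcal{L}\{g_{+};s\}}{s}=\frac{H(0)}{s}+\frac{H(s)-H(0)}{s}
\]
has pseudomeasure boundary behavior at $s=0$, since $1/s$ is the Laplace transform of the Heaviside function and the second term is analytic at $0$. Consequently $\mathcal{L}\{\tau-\int_{0}^{\cdot}g_{+}(y)\mathrm{d}y;s\}$ has pseudomeasure boundary behavior at $s=0$; as $\tau(x)-\int_{0}^{x}g_{+}(y)\mathrm{d}y$ is still boundedly decreasing, Theorem \ref{tbth1} gives the $O(1)$ bound, i.e.\ (\ref{repbo}) and bounded oscillation. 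Your objection that ``$\widehat{g\chi_{(-\infty,0)}}$ divided by $it$ need not be a local pseudomeasure near $t=0$'' overlooks exactly this analytic-continuation step: one does not need the quotient of the raw boundary distribution to be a pseudomeasure, only that the discrepancy is analytic at $0$ modulo a multiple of $1/s$.
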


\begin{proof} If $\tau$ already satisfies (\ref{repbo}) (i.e., it is boundedly oscillating), $\mathcal{L}\{\tau;s\}-G(s)/s$ and $G(s)$ clearly have local pseudomeasure behavior, where $G$ is the Laplace transform of $g$. Conversely, by applying the edge-of-the-wedge theorem  \cite{bremermann,rudin1971} and the fact that the analytic function $1/s$ has global pseudomeasure boundary behavior, we may assume that the $L^{\infty}$-function determined by (\ref{tbeq14}) has support on $[0,\infty)$ and $G(s)=\mathcal{L}\{g;s\}$. The function $\tau(x)-\int_{0}^{x}g(y)\mathrm{d}y$ is of bounded decrease, Theorem \ref{tbth1} then yields (\ref{repbo}).
\end{proof}

The next result is a special case of Theorem \ref{tbth2}; nevertheless, it has a very useful form for applications. It is a version of our Fatou-Riesz Theorem \ref{Fatou-Riesz1}(i) where the asymptotic estimate is obtained with an $O(1)$-remainder. 

\begin{theorem}
\label{tbth3} Let $\tau \in L^{1}_{loc}(\mathbb{R})$ be boundedly decreasing, vanish on $(-\infty,0)$, and have convergent Laplace transform
$(\ref{eqL1})$. Suppose that 
\begin{equation}
\label{tbeq15}
\mathcal{L}\{\tau;s\}- \frac{a}{s^{2}}-\sum_{n=0}^{N}\frac{b_n+c_n\log^{k_n}\left(1/s\right)}{s^{\beta_n+1}}
\end{equation}
has pseudomeasure boundary behavior at $s=0$, where the $\beta_n<1$ and the $k_n\in\mathbb{Z}_{+}$. Then,
\begin{equation}
\label{tbeq16}
\tau(x)= ax+ \sum_{n=0}^{N} x^{\beta_n}\left(\frac{b_n}{\Gamma(\beta_n+1)}+c_n\sum_{j=0}^{k_n} \binom{k_n}{j}D_{j}(\beta_n+1)\log^{k_n-j}x \right)+O(1),
\end{equation}
$x\to\infty,$ where $D_j(\omega)$ is given by $(\ref{eqlogcoeff})$.
\end{theorem}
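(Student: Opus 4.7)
The plan is to reduce the theorem to the boundedness result Theorem \ref{tbth1} by subtracting an explicit model function from $\tau$. I would define
\[
\tau_0(x) := ax + \sum_{n=0}^{N} x^{\beta_n}\!\left(\frac{b_n}{\Gamma(\beta_n+1)} + c_n \sum_{j=0}^{k_n} \binom{k_n}{j} D_j(\beta_n+1)\log^{k_n-j} x\right)
\]
for $x>1$ and $\tau_0(x)=0$ otherwise. The truncation near the origin makes $\tau_0$ locally integrable (this matters only when some $\beta_n$ causes an issue at $0$) and modifies its Laplace transform by an entire summand, which is transparent for pseudomeasure boundary behavior at $s=0$.

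The crucial computation is that, up to such an entire correction,
\[
\mathcal{L}\{\tau_0;s\} = \frac{a}{s^2} + \sum_{n=0}^{N}\frac{b_n+c_n\log^{k_n}(1/s)}{s^{\beta_n+1}},\qquad \Re e\,s>0.
\]
The pieces $a/s^2$ and $b_n/s^{\beta_n+1}$ are immediate from the standard identity $\mathcal{L}\{x^\beta/\Gamma(\beta+1)\}(s) = s^{-\beta-1}$. For the logarithmic contributions I would differentiate this identity $k_n$ times in the parameter $\beta$, legitimate since both sides are analytic in $\beta$ for fixed $s$ with $\Re e\,s>0$. On the right one gets $\partial_\beta^{k_n}s^{-\beta-1} = \log^{k_n}(1/s)\,s^{-\beta-1}$, while Leibniz applied to the product $x^\beta\cdot 1/\Gamma(\beta+1)$ on the left yields exactly $\sum_{j} \binom{k_n}{j}D_j(\beta_n+1)\,x^{\beta_n}\log^{k_n-j}x$. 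Multiplying by $c_n$ and summing in $n$ produces the claimed formula; this is where the combinatorial coefficients $D_j(\beta_n+1)$ arise organically.

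Next I would verify that $\tau_0$ is boundedly oscillating on $[1,\infty)$: the linear term $ax$ is Lipschitz, and for each $\beta_n<1$ the derivative of $x^{\beta_n}\log^{k_n-j}x$ is of order $O(x^{\beta_n-1}\log^{k_n-j}x)$, hence uniformly bounded on $[1,\infty)$, so increments over intervals of length $\delta$ are uniformly controlled. Since bounded decrease is stable under subtraction of boundedly oscillating functions, $\tau-\tau_0$ remains boundedly decreasing; it also lies in $L^1_{loc}$, vanishes on $(-\infty,0)$, and has convergent Laplace transform on $\Re e\,s>0$ equal to the expression \eqref{tbeq15} modulo an entire summand. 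By hypothesis it therefore has pseudomeasure boundary behavior at $s=0$.

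Applying Theorem \ref{tbth1} under hypothesis $(T.1)$ to $\tau-\tau_0$ then delivers $\tau(x)-\tau_0(x)=O(1)$, which is exactly \eqref{tbeq16}. The only genuinely delicate step is the differentiation-in-$\beta$ identity for the Laplace transform, including the justification of exchanging $\partial_\beta^{k_n}$ with the Laplace integral; once this is in place, the rest of the argument is essentially book-keeping plus an appeal to Theorem \ref{tbth1}.
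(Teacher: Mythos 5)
Your proposal is correct and follows essentially the same route as the paper: identify the explicit model function $\tau_0$ as the right-hand side of \eqref{tbeq16}, compute its Laplace transform, note it is boundedly oscillating for $\beta_n<1$, and conclude via Theorem \ref{tbth1}/\ref{tbth2}. The paper simply cites Delange for the Laplace transform identity that you derive by differentiating in the parameter $\beta$, and it disposes of the terms with $\beta_n<0$ (and the $b_n/s$ term) more directly by observing that these are already pseudomeasures and $O(1)$ contributions, whereas your ``differs by an entire summand'' justification for the truncated Laplace transform tacitly presumes $\beta_n>-1$; but since such terms are harmless either way, this is a matter of presentation rather than a gap.
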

Naturally, only those $\beta_n\geq 0$  deliver a contribution to (\ref{tbeq16}).
\begin{proof}
Terms with $\beta_n<0$ or $b_n/s$ in (\ref{tbeq15}) are pseudomeasures. The result is a direct consequence of Theorem \ref{tbth2} (or Theorem \ref{tbth1}) after noticing that the Laplace transform of $x^{\mu}_{+}$ is $s^{-\mu-1}\Gamma(\mu+1)$ and that of 
$$
x^{\mu}\sum_{j=0}^{m} \binom{m}{j}D_{j}(\mu+1)\log^{m-j}_{+}x
$$
 is $s^{-\mu-1}\log^{m}(1/s)$ plus an entire function (see e.g. \cite[Lemma~5]{delange1954}). The first function is boundedly oscillating if $\mu\leq 1$, while the second one if $\mu<1$ for all positive integers $m$.
\end{proof}

It is important to point out that Theorem \ref{tbth2} and Theorem \ref{tbth3} are no longer true if one replaces bounded decrease by the Tauberian hypothesis $(T.2)$ from Theorem \ref{tbth1}, as shown by the following simple example.

\begin{example}
\label{tbex1}
Consider the non-negative function 
$$
\tau(x)=x\left(1+\frac{\cos x}{2}\right).
$$
Since $\tau(x)+\tau'(x)\geq 0$, we have that $e^{x}\tau(x)$ is non-decreasing. Its Laplace transform satisfies
$$
\mathcal{L}\{\tau;s\}-\frac{1}{{s}^{2}}=\frac{1}{4(s-i)^{2}}+\frac{1}{4(s+i)^{2}}
$$
and therefore has analytic continuation through $i(-1,1)$; in particular, it has  local pseudomeasure boundary behavior on this line segment. However,
 $$
 \tau(x)=x+\Omega_{\pm}(x), \quad x\to\infty.
 $$
\end{example}

\section{A characterization of local pseudofunctions}\label{section pseudofunctions}
We now turn our attention to a characterization of distributions that are local pseudo\-functions on an open set $U\subseteq\mathbb{R}$. Let $f\in\mathcal{D}'(U)$. Its singular pseudofunction support in $U$, denoted as $\operatorname*{sing\: supp}_{PF} f$, is defined as the complement in $U$ of the largest open subset of $U$ where $f$ is a local pseudofunction; a standard argument involving partitions of the unity and the fact that smooth functions are multipliers for local pseudofunctions show that this notion is well defined. The ensuing theorem is the main result of this section.
\begin{theorem}
\label{cpfth1} Let $f\in\mathcal{D}'(U)$. Suppose there is a closed null set $E\subset U$ such that
\begin{enumerate}
\item [(I)] $ \operatorname*{sing\: supp}_{PF} f\subseteq E$, and
\item [(II)] for each $t_{0}\in E$ there is a neighborhood $V_{t_{0}}$ of $t_{0}$ and a local pseudomeasure $f_{t_{0}}\in PM_{loc}(V_{t_0})$ such that 
\begin{equation}
\label{cpfeq1}
f=(t-t_{0})f_{t_0} \quad \mbox{on } V_{t_0}\:.  
\end{equation}
\end{enumerate}
Then,  $\operatorname*{sing\: supp}_{PF}f=\emptyset$, that is, $f$ is a local pseudofunction on $U$.
\end{theorem}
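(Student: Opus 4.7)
The argument would be by contradiction. Set $F := \operatorname*{sing\: supp}_{PF} f$; if the conclusion fails, $F \neq \emptyset$, and by (I) it is a closed subset of the null set $E$, hence itself a closed null subset of $U$. A preliminary observation is that $f \in PM_{loc}(U)$ under our hypotheses: on $U \setminus E$ this is immediate from (I), while on each $V_{t_0}$ the identity $f = (t-t_0)f_{t_0}$ exhibits $f$ as the product of a smooth function with a local pseudomeasure, and smooth functions act as multipliers on $PM_{loc}$. The plan is to produce a specific point $t_0 \in F$ at which $f \in PF_{loc}(t_0)$, which contradicts $t_0 \in F$.

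Because the representations in (II) carry no a priori uniform pseudomeasure bounds, the next step would invoke Romanovski's lemma (equivalently, Baire category on the complete metric space $F$) to secure such uniformity. Write $F = \bigcup_{n,k \in \mathbb{N}} F_{n,k}$, where $F_{n,k}$ consists of those $t \in F$ admitting a representation $f=(s-t)f_t$ on $(t-1/k,\,t+1/k) \subseteq V_t$ such that, after applying a fixed normalized smooth cutoff $\chi_{t,k}$ supported in that interval, $\|\chi_{t,k} f_t\|_{PM(\mathbb{R})} \leq n$. After verifying each $F_{n,k}$ is relatively closed in $F$, Baire yields a pair $(n_0,k_0)$ with $F_{n_0,k_0}$ having non-empty relative interior in $F$. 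Pick $t_0$ in this relative interior and a small neighborhood $W$ of $t_0$ in $U$ such that the bound $n_0$ on balls of radius $1/k_0$ holds uniformly for every $t \in F \cap W$.

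To verify $f \in PF_{loc}(t_0)$ I would use the Riemann-Lebesgue characterization \eqref{eqRL}. Take $\varphi \in \mathcal{D}$ with support in a sufficiently small subneighborhood of $t_0$ inside $W$. Given $\varepsilon > 0$, exploit that $F \cap \operatorname{supp}\varphi$ is a compact null set to build a smooth decomposition $\varphi = \alpha_\varepsilon + \beta_\varepsilon$ with $\operatorname{supp}\alpha_\varepsilon$ sitting in an open neighborhood of $F \cap \operatorname{supp}\varphi$ of Lebesgue measure less than $\varepsilon$, and $\operatorname{supp}\beta_\varepsilon$ disjoint from $F$. The $\beta_\varepsilon$-piece gives a bona fide pseudofunction by (I), so $\langle f, e^{iht}\beta_\varepsilon\rangle = o(1)$ as $|h| \to \infty$. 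For the $\alpha_\varepsilon$-piece, use $f = (t-t_0)f_{t_0}$ on $V_{t_0}$ and the pseudomeasure/Wiener-algebra duality to obtain
$$|\langle f, e^{iht} \alpha_\varepsilon\rangle| = |\langle f_{t_0}, e^{iht}(t-t_0)\alpha_\varepsilon\rangle| \leq n_0 \, \|(t-t_0)\alpha_\varepsilon\|_{A(\mathbb{R})}.$$
Constructing $\alpha_\varepsilon$ as a sum of appropriately scaled smooth bumps subordinate to a carefully chosen open cover of $F \cap \operatorname{supp}\varphi$ — using \emph{both} the smallness of $|t-t_0|$ throughout $\operatorname{supp}\varphi$ and the smallness of the total Lebesgue measure — yields $\|(t-t_0)\alpha_\varepsilon\|_{A(\mathbb{R})} < \varepsilon$ uniformly in $h$. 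Sending $\varepsilon \to 0$ and then $|h| \to \infty$ produces the required $o(1)$.

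The main obstacle is precisely this construction of $\alpha_\varepsilon$ with small weighted Wiener-algebra norm: a narrow smooth bump alone does \emph{not} have small $A(\mathbb{R})$-norm, so the vanishing of the weight $(t-t_0)$ at $t_0$ must be combined with the measure-zero character of $E$ to force the bound. This is the precise step where hypothesis (II) (which furnishes the factor $(t-t_0)$) and the null-set assumption on $E$ interact nontrivially, and where the uniform pseudomeasure bound $n_0$ extracted via Romanovski's lemma is indispensable.
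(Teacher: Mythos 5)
There is a genuine gap, and it sits exactly at the step you flag as ``the main obstacle'': the claimed bound $\|(t-t_{0})\alpha_{\varepsilon}\|_{A(\mathbb{R})}<\varepsilon$ is not merely unproven, it is false in general. To get $f\in PF_{loc}(t_{0})$ you must verify (\ref{eqRL}) for all $\varphi\in\mathcal{D}(N)$ on some \emph{fixed} neighborhood $N$ of $t_{0}$, and since $t_{0}$ may be a non-isolated point of $F$ (think of $E$ a Cantor-type null set), $N$ will contain other points $t_{1}\in F$ with $\varphi(t_{1})\neq 0$. Your $\alpha_{\varepsilon}$ must coincide with $\varphi$ near every such $t_{1}$, and for any bump of height about $|\varphi(t_{1})|$ concentrated near $t_{1}$, no matter how narrow, one has $\|(t-t_{0})\alpha_{\varepsilon}\|_{A(\mathbb{R})}\geq c\,\|(t-t_{0})\alpha_{\varepsilon}\|_{L^{\infty}}\gtrsim |t_{1}-t_{0}|\,|\varphi(t_{1})|$, because the Wiener norm dominates the sup norm and is dilation invariant; the weight $(t-t_{0})$ produces smallness only for the bumps sitting at $t_{0}$ itself. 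Consequently the single representation $f=(t-t_{0})f_{t_{0}}$ cannot absorb the contribution of the other exceptional points inside $\operatorname{supp}\varphi$: one must use the representation (hence a bound) at \emph{each} covered point $t_{j}\in F$, uniformly in $t_{j}$, so that an interval of length $\ell_{j}$ contributes $O(\ell_{j})$ and the null-set covering yields the $o(1)$. A secondary unresolved point is the relative closedness of your sets $F_{n,k}$, which is delicate because the cutoffs $\chi_{t,k}$ move with $t$.

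This is precisely where the paper's proof differs. It first reduces to $f=\hat{\tau}$ with $\tau\in L^{\infty}(\mathbb{R})$ and, via Lemma \ref{cpfl1} and the observation that two pseudomeasure solutions of (\ref{cpfeq1}) differ only by a multiple of $\delta(t-t_{0})$, converts hypothesis (II) into the concrete bounds $\sup_{x}\left|\int_{0}^{x}\tau(u)e^{-itu}\mathrm{d}u\right|\leq M_{t}$ for $t\in E$. The Baire step is then applied to the continuous functions $g_{N}(t)=\max_{-N\leq x\leq N}\left|\int_{0}^{x}\tau(u)e^{-itu}\mathrm{d}u\right|$, which makes the closedness issue disappear and yields a bound $M$ uniform on a relatively open portion $E_{2}$ of the exceptional set. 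The quantitative heart is Lemma \ref{cpfl2}: covering the compact null set $E_{2}$ by finitely many intervals of total length $<\varepsilon$, the contribution of each interval is estimated by \emph{integration by parts} of $\tau(u)e^{-it_{j}u}$ against the dilated bump, producing the factor $\ell_{j}$ --- not by smallness of any Wiener-algebra norm of the cutoff. Finally, the paper passes from ``$f$ is a pseudofunction on a set meeting $U\setminus V$'' to all of $U$ via Romanovski's Lemma \ref{RomanovskiLemma} rather than a one-point contradiction; your contradiction scheme could be made to work, but only after replacing your single-point estimate by a uniform, multi-point mechanism of the above kind, which is the missing core of the argument.
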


Naturally, the converse of Theorem \ref{cpfth1} is trivially true, as one can take for $E$ the empty set.

Before giving a proof of Theorem \ref{cpfth1}, we discuss a characterization of distributions that `vanish' at $\pm\infty$ in the sense of Schwartz \cite{schwartz} (or have S-limit equal to 0 at $\pm\infty$  in the terminology of S-asymptotics from \cite{p-s-v}); this result becomes particularly useful when combined with Theorem \ref{cpfth1}. Given $g\in\mathcal{S}'(\mathbb{R})$, we define its pseudofunction spectrum as the closed set $\operatorname{sp}_{PF}(g)=\operatorname*{sing\: supp}_{PF}\hat{g}$. The Schwartz space of bounded distributions $\mathcal{B}'(\mathbb{R})$ is the dual of the test function space 
$$\mathcal{D}_{L^1}(\mathbb{R})=\{\varphi\in C^{\infty}(\mathbb{R})|\: \varphi^{(n)}\in L^{1}(\mathbb{R}), \: \forall n\in\mathbb{N}\}.$$
Traditionally \cite[p.~200]{schwartz}, the completion of $\mathcal{D}(\mathbb{R})$ in (the strong topology of) $\mathcal{B}'(\mathbb{R})$ is denoted as $\dot{\mathcal{B}}'(\mathbb{R})$. A distribution $\tau$ is said to vanish at $\pm\infty$ if $\tau\in\dot{\mathcal{B}}'(\mathbb{R})$; the latter membership relation is equivalent \cite[p.~512]{d-p-v2015} (cf. \cite[p.~201--202]{schwartz}) to the convolution average condition 
\begin{equation}
\label{cpfeq2}
\lim_{|h|\to\infty} \langle \tau(x+h),\varphi(x)\rangle= \lim_{|h|\to\infty} (\tau\ast \check{\varphi})(h)=0, \end{equation}
for each test function $\varphi\in\mathcal{S}(\mathbb{R})$. We also refer to \cite{d-p-v2015} for convolution average characterizations of wider classes of distribution spaces in terms of translation-invariant Banach spaces of tempered distributions. We then have,

\begin{proposition}
\label{cpfp1} Let $\tau\in \mathcal{B}'(\mathbb{R})$. Then, 
 $\tau\in \dot{\mathcal{B}}'(\mathbb{R})$ if and only if $\operatorname{sp}_{PF}(\tau)=\emptyset.$
\end{proposition}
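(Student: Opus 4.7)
The plan is to translate both sides of the equivalence into convolution-average statements for $\tau$ itself via Fourier duality, and then to bridge the gap between the two by an equicontinuity-density argument.

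First, by the generalized Riemann-Lebesgue characterization (\ref{eqRL}) applied with $U=\mathbb{R}$, the hypothesis $\operatorname{sp}_{PF}(\tau)=\emptyset$, i.e., $\hat\tau\in PF_{loc}(\mathbb{R})$, is equivalent to
$$
\langle \hat\tau(t), e^{iht}\chi(t)\rangle = o(1),\quad |h|\to\infty, \qquad \forall\chi\in\mathcal{D}(\mathbb{R}).
$$
Since $\mathcal{F}\{e^{iht}\chi(t);x\} = \hat\chi(x-h)$, the Fourier duality $\langle \hat\tau, \psi\rangle = \langle \tau, \hat\psi\rangle$ rewrites this pairing as
$$
\langle \hat\tau(t), e^{iht}\chi(t)\rangle = \langle \tau(x), \hat\chi(x-h)\rangle = \langle \tau(x+h), \hat\chi(x)\rangle.
$$
Thus the spectral hypothesis is precisely
$$
\langle \tau(x+h),\varphi(x)\rangle \to 0 \text{ as } |h|\to\infty, \qquad \forall\varphi\in\mathcal{F}(\mathcal{D}(\mathbb{R})),
$$
whereas by (\ref{cpfeq2}) the condition $\tau\in\dot{\mathcal{B}}'(\mathbb{R})$ says the same but with $\varphi$ ranging over all of $\mathcal{S}(\mathbb{R})$. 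The forward direction is immediate, since $\mathcal{F}(\mathcal{D}(\mathbb{R}))\subset\mathcal{S}(\mathbb{R})$.

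For the reverse direction, I would invoke the standard structural fact that $\tau\in\mathcal{B}'(\mathbb{R})$ is equivalent to the family of translates $\{\tau(\,\cdot+h) : h\in\mathbb{R}\}$ being a bounded subset of $\mathcal{S}'(\mathbb{R})$. Since $\mathcal{S}(\mathbb{R})$ is a Fr\'echet (hence barrelled) space, bounded sets in $\mathcal{S}'(\mathbb{R})$ are equicontinuous, so there exists a continuous seminorm $q$ on $\mathcal{S}(\mathbb{R})$ with
$$
|\langle \tau(x+h),\psi(x)\rangle|\leq q(\psi),\qquad \forall h\in\mathbb{R},\ \forall\psi\in\mathcal{S}(\mathbb{R}).
$$
Because $\mathcal{D}(\mathbb{R})$ is dense in $\mathcal{S}(\mathbb{R})$ and $\mathcal{F}$ is a topological automorphism of $\mathcal{S}(\mathbb{R})$, the subspace $\mathcal{F}(\mathcal{D}(\mathbb{R}))$ is dense in $\mathcal{S}(\mathbb{R})$. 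Given $\varphi\in\mathcal{S}(\mathbb{R})$ and $\varepsilon>0$, pick $\chi\in\mathcal{D}(\mathbb{R})$ with $q(\hat\chi-\varphi)<\varepsilon$; the equicontinuity bound yields
$$
|\langle \tau(x+h),\varphi(x)\rangle|\leq |\langle \tau(x+h),\hat\chi(x)\rangle|+\varepsilon,
$$
and the first term tends to $0$ by the spectral hypothesis already translated above. Since $\varepsilon$ is arbitrary, this gives $\tau\in\dot{\mathcal{B}}'(\mathbb{R})$.

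The only real obstacle is in the reverse direction: the Fourier-side hypothesis only supplies information when tested against compactly supported smooth functions, whereas $\dot{\mathcal{B}}'$-membership demands pairing with every Schwartz function. The uniform bound on translates of a bounded distribution is precisely the ingredient that allows the hypothesis to be upgraded from $\mathcal{F}(\mathcal{D}(\mathbb{R}))$ to $\mathcal{S}(\mathbb{R})$ by density.
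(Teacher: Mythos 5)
Your proposal is correct and follows essentially the same route as the paper: the forward implication via the Riemann--Lebesgue characterization (\ref{eqRL}) together with (\ref{cpfeq2}), and the converse by noting that $\tau\in\mathcal{B}'(\mathbb{R})$ makes the translates of $\tau$ a bounded, hence equicontinuous, family in $\mathcal{S}'(\mathbb{R})$ (Banach--Steinhaus), so the convolution-average decay extends from the dense subspace $\mathcal{F}(\mathcal{D}(\mathbb{R}))$ to all of $\mathcal{S}(\mathbb{R})$. Your explicit Fourier-duality computation and seminorm estimate simply spell out the steps the paper cites more briefly.
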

\begin{proof}
If $\tau\in \dot{\mathcal{B}}'(\mathbb{R})$, then we directly obtain  $\hat{\tau}\in PF_{loc}(\mathbb{R})$ in view of (\ref{cpfeq2}) and (\ref{eqRL}). Conversely, if $\hat\tau$ is a local pseudofunction on $\mathbb{R}$, we obtain that (\ref{cpfeq2}) holds for every $\varphi\in\mathcal{F}(\mathcal{D}(\mathbb{R}))$. On the other hand, the hypothesis $\tau\in \mathcal{B}'(\mathbb{R})$ gives that the set of translates of $\tau$ is bounded in $\mathcal{S}'(\mathbb{R})$, and hence equicontinuous by the Banach-Steinhaus theorem. The density of $\mathcal{F}(\mathcal{D}(\mathbb{R}))$ in $\mathcal{S}(\mathbb{R})$ then implies that (\ref{cpfeq2}) remains valid for all $\varphi\in\mathcal{S}(\mathbb{R})$ (in fact for all $\varphi\in\mathcal{D}_{L^1}(\mathbb{R})$), namely, $\tau\in\dot{\mathcal{B}}'(\mathbb{R})$ by the quoted characterization of the space of distributions vanishing at $\pm\infty$.
\end{proof}

The next corollary can be regarded as a Tauberian theorem for Fourier transforms. (The Tauberian condition being the membership relation $\tau\in \mathcal{B}'(\mathbb{R})$.)

\begin{corollary}
\label{cpfc1} Suppose that $\tau\in \mathcal{B}'(\mathbb{R})\cap L^{1}_{loc}(\mathbb{R})$ and that there is a closed null set $E$ such that $\operatorname{sp}_{PF}(\tau)\subseteq E$ and for each $t\in E$ one can find a constant $M_t>0$, independent of $x$, with
\begin{equation}
\label{cpfeq3}
\left|\int_{0}^{x}\tau(u)e^{-itu}\mathrm{d}u\right|\leq M_t, \quad x\in\mathbb{R}.
\end{equation}
Then, $\tau\in \dot{\mathcal{B}}'(\mathbb{R})$.
\end{corollary}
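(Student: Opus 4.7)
The approach is to combine Proposition \ref{cpfp1} with Theorem \ref{cpfth1}. By Proposition \ref{cpfp1}, the conclusion $\tau\in\dot{\mathcal{B}}'(\mathbb{R})$ is equivalent to $\operatorname{sp}_{PF}(\tau)=\emptyset$, i.e., to $\hat{\tau}$ being a local pseudofunction on all of $\mathbb{R}$. I will deduce this last statement from Theorem \ref{cpfth1} applied to $f=\hat{\tau}$ on $U=\mathbb{R}$ with the given closed null set $E$: hypothesis (I) of that theorem is exactly the assumption $\operatorname{sp}_{PF}(\tau)\subseteq E$, so everything reduces to establishing the pseudomeasure factorization (II) at each $t_0\in E$.

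For a fixed $t_0\in E$, the natural candidate for the factor is obtained by antidifferentiating on the $x$-side. Setting
\[
T_{t_0}(x):=\int_{0}^{x}\tau(u)\,e^{-it_0 u}\,\mathrm{d}u,
\]
the bound (\ref{cpfeq3}) gives $T_{t_0}\in L^{\infty}(\mathbb{R})\subset\mathcal{S}'(\mathbb{R})$; Fourier inversion in $\mathcal{S}'(\mathbb{R})$ shows that the Fourier transform of $\widehat{T_{t_0}}$ is, up to the usual constant and a reflection, $T_{t_0}$ itself, which lies in $L^{\infty}(\mathbb{R})$, and therefore $\widehat{T_{t_0}}\in PM(\mathbb{R})$ is a global pseudomeasure. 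Since $T_{t_0}'(x)=\tau(x)e^{-it_0 x}$ in $\mathcal{D}'(\mathbb{R})$, taking Fourier transforms and using that modulation by $e^{-it_0 x}$ corresponds to translation by $t_0$ on the spectral side yields
\[
\hat{\tau}(t)=(t-t_0)\,f_{t_0}(t),\qquad f_{t_0}(t):=i\,\widehat{T_{t_0}}(t-t_0),
\]
as an identity in $\mathcal{S}'(\mathbb{R})$. Translates of global pseudomeasures remain global, and a fortiori local, pseudomeasures, so any open neighborhood $V_{t_0}$ of $t_0$ witnesses condition (II) of Theorem \ref{cpfth1}.

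With (I) and (II) both verified, Theorem \ref{cpfth1} yields $\hat{\tau}\in PF_{loc}(\mathbb{R})$, and Proposition \ref{cpfp1} then delivers $\tau\in\dot{\mathcal{B}}'(\mathbb{R})$. The heavy lifting is already packaged inside Theorem \ref{cpfth1}; the only real subtleties here are bookkeeping ones, namely keeping the Fourier conventions straight so that the factor appearing is $(t-t_0)$ rather than $(t+t_0)$, and correctly identifying $\widehat{T_{t_0}}$, not $T_{t_0}$ itself, as the relevant pseudomeasure.
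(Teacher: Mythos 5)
Your argument is correct and is essentially the paper's own proof: both verify hypothesis (II) of Theorem \ref{cpfth1} for $f=\hat{\tau}$ by taking $f_{t_0}$ to be (up to a constant factor) the Fourier transform of the bounded function $e^{it_0x}\int_0^x\tau(u)e^{-it_0u}\mathrm{d}u$, and then conclude via Proposition \ref{cpfp1}. Your extra care with the Fourier conventions and the identification of $\widehat{T_{t_0}}$ as the pseudomeasure matches what the paper leaves implicit.
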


\begin{proof}
This follows from Propostion \ref{cpfp1} because Theorem \ref{cpfth1} applied to $f=\hat{\tau}$ yields $\operatorname{sp}_{PF}(\tau)=\emptyset$. Indeed, the condition (\ref{cpfeq3}) implies 
(\ref{cpfeq1}) with $f_{t_{0}}$ given by the Fourier transform of the $L^{\infty}$-function $e^{it_{0}x}\int_{0}^{x}\tau(u)e^{-it_0u}\mathrm{d}u$.
\end{proof}

The rest of this section is devoted to the proof of Theorem \ref{cpfth1}. We shall use the following variant of Romanovski's lemma.
\begin{lemma}{\cite[Thm.~2.1]{estrada-vindas2010R}}
\label{RomanovskiLemma}Let $X$ be a topological space. Let $\mathfrak{U}$ be a non-empty family of open sets of $X$ that satisfies the following four properties:

\textnormal{(a)} $\mathfrak{U}\neq\{\varnothing\}.$

\textnormal{(b)} If $V\in\mathfrak{U},$ $W\subset V,$ and $W$ is open,
then $W\in\mathfrak{U}.$

\textnormal{(c)} If $V_{\alpha}\in\mathfrak{U}$ $\forall\alpha\in A,$
then $\bigcup_{\alpha\in A}V_{\alpha}\in\mathfrak{U}.$

\textnormal{(d)} Whenever $V\in\mathfrak{U},$ $V\neq X,$ then there
exists $W\in\mathfrak{U}$ such that $W\cap\left(  X\setminus V\right)
\neq\varnothing.$

Then $\mathfrak{U}$ must be the class of all open subsets of $X.$
\end{lemma}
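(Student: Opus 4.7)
The plan is to identify a canonical largest member of $\mathfrak{U}$, show by contradiction via property (d) that it must equal $X$, and then read off the conclusion from (b).

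Concretely, I would set $V^{\ast}:=\bigcup_{V\in\mathfrak{U}}V$ and apply property (c) to the entire collection $\mathfrak{U}$ to obtain $V^{\ast}\in\mathfrak{U}$ in one stroke. The non-emptiness of $\mathfrak{U}$, guaranteed by (a), ensures that this union is indexed over a non-empty family; as the argument will show, (a) plays essentially no role beyond excluding the pathological case $\mathfrak{U}=\{\varnothing\}$, in which (d) would anyway fail whenever $X\neq\varnothing$.

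Next I would show $V^{\ast}=X$ by contradiction. Assume $V^{\ast}\neq X$. Since $V^{\ast}\in\mathfrak{U}$ and $V^{\ast}\neq X$, property (d) furnishes some $W\in\mathfrak{U}$ with $W\cap(X\setminus V^{\ast})\neq\varnothing$. On the other hand, every member of $\mathfrak{U}$ is, by the very definition of $V^{\ast}$, contained in $V^{\ast}$; in particular $W\subseteq V^{\ast}$ and hence $W\cap(X\setminus V^{\ast})=\varnothing$. This contradicts the previous sentence, forcing $V^{\ast}=X$, that is, $X\in\mathfrak{U}$.

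Finally, for an arbitrary open subset $U$ of $X$, applying property (b) with $V=X$ and $W=U$ immediately gives $U\in\mathfrak{U}$, so $\mathfrak{U}$ coincides with the class of all open subsets of $X$. There is no genuine technical obstacle: the whole proof is a three-line diagonal observation, and the only conceptual subtlety is recognizing how the axioms divide their labor, namely that (c) is used to manufacture a maximal candidate, (d) forces this candidate to exhaust $X$, and (b) then propagates membership downward to every open set.
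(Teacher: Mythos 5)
Your argument is correct: setting $V^{\ast}=\bigcup_{V\in\mathfrak{U}}V$, invoking (c) to get $V^{\ast}\in\mathfrak{U}$, using (d) to force $V^{\ast}=X$ by contradiction, and then descending to every open set via (b) is exactly the standard proof of Romanovski's lemma. Note that the paper itself states this lemma only as a citation of \cite[Thm.~2.1]{estrada-vindas2010R} and gives no proof, and your short argument coincides with the one found in that reference, so there is nothing to add.
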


We also need the ensuing two lemmas.

\begin{lemma}
\label{cpfl1} Let $g\in PM(\mathbb{R})$ have compact support and let $t_0\notin\operatorname*{supp} g$. Then $(t-t_{0})^{-1}g\in PM(\mathbb{R})$.
\end{lemma}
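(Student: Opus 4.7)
The plan is to reduce the claim to the statement that $g$, as a pseudomeasure, can be multiplied by smooth compactly supported functions and still land in $PM(\mathbb{R})$. Since $\operatorname*{supp} g$ is compact and $t_{0}$ lies outside it, I would first pick a cutoff $\chi\in\mathcal{D}(\mathbb{R})$ that is identically $1$ on an open neighborhood of $\operatorname*{supp} g$ and identically $0$ on a neighborhood of $t_{0}$. The singularity at $t_{0}$ is then killed by $\chi$, so
\begin{equation*}
\varphi(t):=\frac{\chi(t)}{t-t_{0}}
\end{equation*}
defines an element of $\mathcal{D}(\mathbb{R})$.

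Next, I would invoke the module structure of $PM(\mathbb{R})$ over the Wiener algebra recalled in Section \ref{section preli}. Every $\varphi\in\mathcal{D}(\mathbb{R})$ lies in $A(\mathbb{R})=\mathcal{F}(L^{1}(\mathbb{R}))$ since $\hat{\varphi}\in\mathcal{S}(\mathbb{R})\subset L^{1}(\mathbb{R})$; consequently $\varphi g\in PM(\mathbb{R})$. Equivalently, $2\pi\widehat{\varphi g}=\hat{\varphi}\ast\hat{g}\in L^{1}\ast L^{\infty}\subset L^{\infty}(\mathbb{R})$.

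It remains only to identify $\varphi g$ with the distribution $(t-t_{0})^{-1}g$ intended by the statement. Because $\chi\equiv 1$ on a neighborhood of $\operatorname*{supp} g$, one has $\chi g=g$ and therefore
\begin{equation*}
(t-t_{0})(\varphi g)=\chi g=g.
\end{equation*}
Moreover, this choice is independent of the particular cutoff: if $\psi\in C^{\infty}(\mathbb{R})$ is any other smooth function coinciding with $1/(t-t_{0})$ on a neighborhood of $\operatorname*{supp} g$, then $\psi-\varphi$ vanishes on a neighborhood of $\operatorname*{supp} g$, whence $(\psi-\varphi)g=0$. Thus $(t-t_{0})^{-1}g=\varphi g\in PM(\mathbb{R})$, as claimed.

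There is no genuine obstacle here; the only point requiring minor care is the well-definedness of $(t-t_{0})^{-1}g$ as a distribution, which is resolved by the standard observation that two $C^{\infty}$ functions agreeing on a neighborhood of $\operatorname*{supp} g$ produce the same product with $g$.
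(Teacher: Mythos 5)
Your proof is correct and follows essentially the same route as the paper: choose a cutoff $\chi\in\mathcal{D}(\mathbb{R})$ equal to $1$ near $\operatorname*{supp} g$ and vanishing near $t_0$, observe that $\chi(t)/(t-t_0)\in\mathcal{D}(\mathbb{R})\subset A(\mathbb{R})$, and invoke the module structure of $PM(\mathbb{R})$ over the Wiener algebra. Your added remarks on the identification $(t-t_0)(\varphi g)=g$ and the independence of the cutoff are welcome clarifications but not a different argument.
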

\begin{proof}
Let $\varphi\in\mathcal{D}(\mathbb{R})$ be equal to 1 in a neighborhood of $\operatorname*{supp} g$ with $t_0\notin\operatorname*{supp} \varphi$. Then, $\psi(t)=(t-t_{0})^{-1}\varphi(t)$ is also an element of $\mathcal{D}(\mathbb{R})$ and $(t-t_{0})^{-1}g= \psi g \in PM(\mathbb{R})$.
\end{proof}

\begin{lemma}\label{cpfl2}
Let $f=\hat{\tau}$ with $\tau\in L^{\infty}(\mathbb{R})$ and let $W$ be open. Suppose that the restriction of $f$ to $W\setminus \bigcup_{j=1}^{n}[t_{j}-\ell_j/2,t_{j}+\ell_{j}/2]$ is a local pseudofunction, where $[t_{j}-\ell_{j},t_{j}+\ell _{j}]\subset W$ and the $[t_{j}-\ell_{j},t_{j}+\ell _{j}]$ are disjoint. There is an absolute constant $C$ such that 
$$
\limsup_{|h|\to\infty} \left|\langle f(t),\varphi(t)e^{iht} \rangle\right|\leq CM \| \hat{\varphi} \|_{L^{1}(\mathbb{R})}\sum_{j=1}^{n}\ell_{j}, \quad\forall \varphi\in\mathcal{D}(W),
$$
where 
$$
M=\max_{j=1,\dots,n}\sup_{x\in\mathbb{R}}\left|\int_{0}^{x}\tau(u)e^{-it_{j}u}\mathrm{d}u\right|.
$$
\end{lemma}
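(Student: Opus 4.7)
The plan is to factor out a zero of $f$ at each $t_j$ by passing through primitives, reduce the main estimate to a global pseudomeasure bound, and extract the linear dependence on $\sum_j\ell_j$ via a scaling argument with a single fixed template bump.

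For each $j\in\{1,\dots,n\}$, set $T_j(u):=\int_{0}^{u}\tau(v)e^{-it_j v}\,\mathrm{d}v$. By hypothesis $T_j$ is continuous with $\|T_j\|_{L^{\infty}(\mathbb{R})}\leq M$, and its distributional derivative is $T_j'(u)=\tau(u)e^{-it_j u}$. Taking Fourier transforms yields the globally valid distributional identity
\[
f(t)=i(t-t_j)\,G_j(t),\qquad G_j:=\mathcal{F}[e^{it_j\cdot}T_j],
\]
where $G_j$ is a tempered pseudomeasure satisfying $\|e^{it_j\cdot}T_j\|_{L^{\infty}}\leq M$.

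Fix once and for all a template $\chi\in\mathcal{D}((-1,1))$ with $\chi\equiv 1$ on $[-\tfrac12,\tfrac12]$, and set $\chi_j(t):=\chi((t-t_j)/\ell_j)$; by the disjointness assumption and $[t_j-\ell_j,t_j+\ell_j]\subset W$, the function $\chi_{0}:=1-\sum_{j=1}^{n}\chi_j$ lies in $C^{\infty}(\mathbb{R})$, and $\chi_{0}\varphi\in\mathcal{D}(W)$ has support contained in $W\setminus\bigcup_{j}[t_j-\ell_j/2,t_j+\ell_j/2]$. By (\ref{eqRL}) applied on the local pseudofunction piece,
\[
\lim_{|h|\to\infty}\langle f,\chi_{0}\varphi\,e^{ih\cdot}\rangle=0.
\]
For each $j$, writing $\eta_j(t):=(t-t_j)\chi_j(t)$, the factorization yields $\langle f,\chi_j\varphi\,e^{ih\cdot}\rangle=i\langle G_j,\eta_j\varphi\,e^{ih\cdot}\rangle$. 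Pushing the test function through Fourier duality via $\mathcal{F}[\eta_j\varphi\,e^{ih\cdot}](u)=(2\pi)^{-1}(\hat{\eta}_j\ast\hat{\varphi})(u-h)$, combined with Young's inequality and the pseudomeasure norm bound, gives
\[
\bigl|\langle G_j,\eta_j\varphi\,e^{ih\cdot}\rangle\bigr|\leq \frac{M}{2\pi}\,\|\hat{\eta}_j\|_{L^1}\,\|\hat{\varphi}\|_{L^1}.
\]

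The final ingredient is the scaling identity $\eta_j(t)=\ell_j\,\eta_{*}((t-t_j)/\ell_j)$ with the fixed template $\eta_{*}(s):=s\chi(s)$. The substitution $s=(t-t_j)/\ell_j$ yields $\hat{\eta}_j(y)=\ell_j^{2}\,e^{-iyt_j}\hat{\eta}_{*}(\ell_j y)$, whence $\|\hat{\eta}_j\|_{L^1}=\ell_j\|\hat{\eta}_{*}\|_{L^1}$. Taking $C:=\|\hat{\eta}_{*}\|_{L^1}/(2\pi)$, an absolute constant depending only on the template $\chi$, and summing over $j$ produces the claimed bound. The only delicate aspect is the Fourier bookkeeping: the $(2\pi)^{-1}$ from the product-to-convolution rule and the translation identity for the oscillating factor $e^{ih\cdot}$ must be tracked carefully to ensure that $C$ ends up genuinely independent of $n$, the $t_j$, the $\ell_j$, $\varphi$, and $\tau$.
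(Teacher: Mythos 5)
Your proof is correct and essentially the same as the paper's: the decomposition $\varphi=(1-\sum_j\chi_j)\varphi+\sum_j\chi_j\varphi$ with the scaled bumps is identical, and your factorization $f(t)=i(t-t_j)G_j(t)$, $G_j=\mathcal{F}[e^{it_j\cdot}T_j]$, is precisely the paper's integration by parts against the bounded primitives $T_j$ rewritten on the Fourier side, even yielding the same constant since $\|\mathcal{F}[s\chi(s)]\|_{L^{1}}=\|\hat{\chi}'\|_{L^{1}}$. The one small repair: take $\chi\equiv1$ on a neighborhood of $[-1/2,1/2]$ (as the paper does), because with $\chi\equiv1$ only on $[-1/2,1/2]$ itself the support of $(1-\sum_j\chi_j)\varphi$ may contain the endpoints $t_j\pm\ell_j/2$ and hence need not be contained in the open set $W\setminus\bigcup_{j}[t_{j}-\ell_j/2,t_{j}+\ell_{j}/2]$ on which the local pseudofunction hypothesis is applied.
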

\begin{proof} We may obviously assume that $M<\infty$. Let $\chi\in\mathcal{D}(-1,1)$ be even such that $\chi(t)=1$ for $t$ in a neighborhood of $[-1/2,1/2]$. Set $\chi_{j}(t)=\chi((t-t_{j})/\ell_{j})$ and $\check{\tau}(x)=\tau(-x)$. Since 
$$
\operatorname*{supp}(\varphi(1-\sum_{j=1}^{n}\chi_{j}))\cap \bigcup_{j=1}^{n}[t_{j}-\ell_j/2,t_{j}+\ell_{j}/2]=\emptyset,
$$
we have that 
\begin{align*} 
\limsup_{|h|\to\infty} \left|\langle f(t),\varphi(t)e^{iht} \rangle\right|&\leq
\frac{\|\hat{\varphi}\|_{L^{1}(\mathbb{R})}}{2\pi}\sum_{j=1}^{n}\|\check{\tau}\ast \hat{\chi}_{j}\|_{L^{\infty}(\mathbb{R})} 
\\
&
=\frac{\|\hat{\varphi}\|_{L^{1}(\mathbb{R})}}{2\pi}\sum_{j=1}^{n}\ell_{j}\sup_{h\in\mathbb{R}}\left|\int_{-\infty}^{\infty} \tau(x)e^{-it_{j}x}\hat{\chi}(\ell_{j}(h+x))\mathrm{d}x\right|
\\
&
\leq 
\frac{\|\hat{\chi}'\|_{L^{1}(\mathbb{R})}}{2\pi}M\|\hat{\varphi}\|_{L^{1}(\mathbb{R})}\sum_{j=1}^{n}\ell_{j},
\end{align*}   
where we have used integration by parts in the last step.
\end{proof}

We can now show Theorem \ref{cpfth1}.

\begin{proof}[Proof of Theorem \ref{cpfth1}] 
We will apply Lemma \ref{cpfl1} to reduce the proof of the  general case to showing a special case of Corollary \ref{cpfc1}. In fact, our assumptions imply that $f$ is a local pseudomeasure on $U$. Since the hypotheses and the conclusion are local, we can assume that $f$ is the restriction to $U$ of a global compactly supported pseudomeasure $\hat{\tau}$, with $\tau\in L^{\infty}(\mathbb{R}).$ We may thus assume that $f$ is globally defined on $\mathbb{R}$ with compact support and we simply write $f=\hat{\tau}$. We can also suppose that each $f_{t_0}$ appearing in (\ref{cpfeq1}) is a compactly supported global pseudomeasure. By Lemma \ref{cpfl1} applied to $g_{t_0}:=f-(t-t_0)f_{t_0}$, we can replace $f_{t_0}$ by $(t-t_{0})^{-1}g_{t_{0}}+f_{t_0}$ and also suppose that the all equations (\ref{cpfeq1}) hold on $\mathbb{R}$ with $f_{t_{0}}\in PM(\mathbb{R})$. Since any two different pseudomeasure solutions of (\ref{cpfeq1}) can only differ by a multiple of the Dirac delta $\delta(t-t_0)$, we conclude under these circumstances that $\tau$ must fulfill (\ref{cpfeq3}) for each $t\in E$. Moreover, by going to localizations again if necessary, we assume that $E$ is compact in $U$. After all these reductions, we now proceed to show that $\operatorname*{sing\: supp}_{PF} f=\emptyset$. 

We are going to check that $f$ is a local pseudofunction on $U$ via Lemma \ref{RomanovskiLemma}. For it, consider $X=U$ and the family $\mathfrak{U}$ of all open subsets $V\subseteq U$ such that $f_{|V}\in PF_{loc}(V)$. The condition (a) holds for $\mathfrak{U}$ because of the assumption (I), while (b) and (c) are obvious. It remains to check the condition (d). So, let $V\in\mathfrak{U}$ with $V\subsetneq U$. Set $E_1=E\cap(U\setminus V)$. If $E_1=\emptyset$, we would be done because then we could find an open $W\subset U$ disjoint from the compact $E$ with $(U\setminus V)\subset W$; we would then obtain that $W\in\mathfrak{U}$ since $E$ contains $\operatorname*{sing\: supp}_{PF} f$. So, assume that the null compact set $E_1\subset E$ is non-empty. Consider the sequence of continuous functions 
$$g_{N}(t)=\max_{-N\leq x\leq N} \left| \int_{0}^{x}\tau(u)e^{-it u}\mathrm{d}u \right|, \quad t\in E_1.$$
The $g_{N}$ are pointwise bounded on $E_1$ because of (\ref{cpfeq3}). Employing the Baire theorem, we now obtain the existence of a constant $M>0$ and an open subset $W\subset U$ such that  $E_2=W\cap E_1\neq\emptyset$ and 
$$
\sup_{t\in E_2}\sup_{x\in\mathbb{R}}\left| \int_{0}^{x}\tau(u)e^{-it u}\mathrm{d}u \right|<M<\infty.
$$
By reducing the size of $W$ if necessary, we may additionally assume that $E_2$ is compact. We now show that $f_{|W}$ is a local pseudofunction. Let $\varphi\in\mathcal{D}(W)$ and fix $\varepsilon>0$. By compactness of the null set $E_2$, we can clearly find a finite covering $E_2\subseteq \bigcup_{j=1}^{n} [t_{j}-l_j/2,t_{j}+l_{j}/2]$ by intervals satisfying the conditions of Lemma \ref{cpfl2} with $\sum_{j=1}^{n}\ell_{j}<\varepsilon$. This gives that
$$
 \limsup_{|h|\to\infty} \left|\langle f(t),\varphi(t)e^{iht} \rangle\right|\leq \varepsilon CM \| \hat{\varphi} \|_{L^{1}(\mathbb{R})},
$$
namely,  $\lim_{|h|\to\infty}\langle f(t),\varphi(t)e^{iht} \rangle=0$ because $\varepsilon$ was arbitrarily chosen.
Consequently, $W$ satisfies $W\in\mathfrak{U}$ and $W\cap(U\setminus V)$ is non-empty. We have therefore shown that $\mathfrak{U}$ is the family of all open subsets of $U$; in particular,  $U\in\mathfrak{U}$, or equivalently, $\operatorname*{sing\: supp}_{PF} f=\emptyset$.
\end{proof}

\section{Tauberian theorems for Laplace transforms}
\label{section tauberians Laplace}
We now apply our previous results from Section \ref{section pseudofunctions} and Section \ref{section boundedness theorem} to derive several complex Tauberian theorems for Laplace transforms with local pseudofunction boundary behavior.

Our first main result is a general version of Theorem \ref{Fatou-Riesz1}.

\begin{theorem} \label{tth1} Let $\tau \in L^{1}_{loc}(\mathbb{R})$ with $\operatorname*{supp} \tau \subseteq [0,\infty)$ be slowly decreasing and have Laplace transform
\begin{equation}
\label{ttheq1}
\mathcal{L}\{\tau;s\}=\int_{0}^{\infty}\tau(x)e^{-sx}\mathrm{d}x \quad  \mbox{convergent for } \Re e \: s > 0.
\end{equation}
Let $g\in L^{\infty}(\mathbb{R})$ and set $G(s)=\int_{0}^{\infty}g(x)e^{-sx}\mathrm{d}x$ for $\Re e\:s>0$. Suppose that 
\begin{equation} 
 \label{ttheq3}
F(s)=\mathcal{L}\{\tau;s\}-\frac{b}{s}-\frac{G(s)}{s}
\end{equation}
has local pseudofunction boundary behavior on $i(\mathbb{R}\setminus{E})$,
where $E$ is a closed null set. If for each $it\in E$
\begin{equation}
\label{ttheqbc1}
\frac{F(s)}{s-it} \quad \mbox{has pseudomeasure boundary behavior at } it,
\end{equation}
then
\begin{equation} 
 \label{ttheq2}
\tau(x) = b+\int_{0}^{x}g(y)\mathrm{d}y +o(1), \quad x \to \infty.
\end{equation}

Conversely, if $\tau$ satisfies $(\ref{ttheq2})$, then $(\ref{ttheq3})$ has local pseudofunction boundary behavior on the whole line $\Re e\:s=0$.
\end{theorem}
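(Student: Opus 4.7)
The plan is to reduce the conclusion to an $o(1)$ estimate for a single auxiliary function. Set
$$
\tau_1(x) := \tau(x)-b-\int_0^x g(y)\mathrm{d}y, \quad x\geq 0,
$$
and $\tau_1(x)=0$ for $x<0$. The subtracted part is Lipschitz with constant $\|g\|_\infty$, so $\tau_1$ inherits slow decrease from $\tau$; a direct Fubini-style computation also gives $\mathcal{L}\{\tau_1;s\}=F(s)$ on $\Re e\: s>0$. The goal thus becomes $\tau_1(x)=o(1)$.

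The first substantive step is the boundedness $\tau_1=O(1)$ from Theorem \ref{tbth1}, for which I would verify that $F$ has pseudomeasure boundary behavior at $s=0$. If $0\notin iE$ this is a special case of the local pseudofunction hypothesis; if $0\in iE$, the hypothesis (\ref{ttheqbc1}) says $F(s)/s$ has pseudomeasure behavior at $s=0$, and since $s$ is a $C^\infty$ multiplier for local pseudomeasures (Section \ref{section preli}), $F(s)=s\cdot(F(s)/s)$ does as well. Slow decrease implies bounded decrease, so Theorem \ref{tbth1}(T.1) applied componentwise to the real and imaginary parts of $\tau_1$ yields $\tau_1\in L^\infty(\mathbb{R})$.

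With $\tau_1\in\mathcal{B}'(\mathbb{R})$ in hand, I would invoke Corollary \ref{cpfc1}. The spectral inclusion $\operatorname{sp}_{PF}(\tau_1)\subseteq E$ follows because $\hat{\tau_1}$ is the distributional boundary value of $F$ on $\Re e\:s=0$ and $F$ is a local pseudofunction on $i(\mathbb{R}\setminus E)$ by hypothesis. The heart of the argument is then verifying that for each $t\in E$ the primitive
$$
T_t(x) := \int_0^x \tau_1(u)e^{-itu}\mathrm{d}u
$$
is bounded. Its Laplace transform $\mathcal{L}\{T_t;s\}=F(s+it)/s$ has boundary behavior at $s=0$ that coincides with the behavior of $F(w)/(w-it)$ at $w=it$, and hence is pseudomeasure by (\ref{ttheqbc1}). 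Since $\tau_1$ is bounded, $T_t$ is Lipschitz and \emph{a fortiori} boundedly decreasing, so Theorem \ref{tbth1}(T.1) applied to the real and imaginary parts of $T_t$ produces $T_t=O(1)$. Corollary \ref{cpfc1} then yields $\tau_1\in\dot{\mathcal{B}}'(\mathbb{R})$, i.e., $(\tau_1\ast\check\varphi)(h)\to 0$ as $|h|\to\infty$ for every $\varphi\in\mathcal{S}(\mathbb{R})$.

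The final step is the standard upgrade from convolution-average decay to pointwise decay via slow decrease: for each $\varepsilon>0$, pick $\delta>0$ as in the definition of slow decrease and a non-negative $\varphi\in\mathcal{D}([0,\delta])$ of total mass one; then $\tau_1(h+x)\geq\tau_1(h)-\varepsilon$ on $[0,\delta]$ integrated against $\varphi$ forces $\limsup_{h\to\infty}\tau_1(h)\leq\varepsilon$, while the symmetric choice $\operatorname{supp}\varphi\subset[-\delta,0]$ yields $\liminf\tau_1(h)\geq-\varepsilon$; sending $\varepsilon\to 0^+$ on real and imaginary parts closes the direct implication. The converse is immediate from Proposition \ref{cpfp1}: $\tau_1=o(1)$ forces $\tau_1\in\dot{\mathcal{B}}'(\mathbb{R})$, and the Fourier transform of such a distribution—namely, the boundary distribution of $F$—lies in $PF_{loc}(\mathbb{R})$. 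The anticipated obstacle is the third paragraph: converting (\ref{ttheqbc1}) into genuine boundedness of each $T_t$ via Theorem \ref{tbth1} is precisely what licenses Corollary \ref{cpfc1} to absorb the exceptional null set, and without it Romanovski–Baire machinery cannot be brought to bear.
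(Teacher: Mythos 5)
Your argument is correct and follows essentially the same route as the paper's proof: reduce to $\tau_1(x)=\tau(x)-b-\int_0^x g(y)\mathrm{d}y$, obtain $\tau_1=O(1)$ from Theorem \ref{tbth1} (after observing that (\ref{ttheqbc1}) forces pseudomeasure behavior of $F$ at $s=0$), use the Section \ref{section pseudofunctions} machinery together with Proposition \ref{cpfp1} to get $\tau_1\in\dot{\mathcal{B}}'(\mathbb{R})$, and finish with the slow-decrease test-function argument. The only minor deviation is that you route through Corollary \ref{cpfc1}, upgrading (\ref{ttheqbc1}) to boundedness of the primitives $\int_0^x\tau_1(u)e^{-itu}\mathrm{d}u$ by a second application of Theorem \ref{tbth1} (precisely the equivalence noted in Remark \ref{tthr2}), whereas the paper feeds the boundary distribution of $F(s)/(s-it)$ directly into the factorization condition (II) of Theorem \ref{cpfth1}; both steps rest on the same lemmas and yield the same conclusion.
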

\begin{proof} The function $\tau(x)-b-\int_{0}^{x}g(y)$ is also slowly decreasing, we may therefore assume that $g=0$ and $b=0$. The hypotheses imply that $\mathcal{L}\{\tau;s\}$ has pseudomeasure boundary behavior at $s=0$, and, hence, $\tau$ should be bounded near $\infty$ in view of Theorem \ref{tbth1}. In particular, $\tau\in \mathcal{B}'(\mathbb{R})$, as the sum of a compactly supported distribution and an $L^{\infty}$-function. Its Laplace transform then has distributional boundary value $\hat{\tau}$ on the whole $i\mathbb{R}$. Theorem \ref{cpfth1} hence yields $\operatorname*{sp}_{PF} (\tau)=\emptyset$, and Proposition \ref{cpfp1} gives $\tau\in\dot{\mathcal{B}}'(\mathbb{R})$, namely,
 $$
\int_{-\infty}^{\infty}\tau(x+h) \phi(x)\mathrm{d}x = o(1), \quad h\to\infty, \quad \mbox{for each } \phi \in \mathcal{S}(\mathbb{R}).
$$
It remains to choose suitable test functions in the above relation to get $\tau(x) = o(1)$. Let $\varepsilon > 0$ be arbitrary. Because $\tau$ is slowly decreasing, there exists $\delta > 0$ such that $\tau(u) - \tau(y) > - \varepsilon$ for all $ \delta+y>u>y$ and sufficiently large $y$.  Let us choose a non-negative $\phi \in \mathcal{D}(-\delta,0)$ such that $\int_{-\infty}^{\infty} \phi(x)\mathrm{d}x = 1$. Then,

\begin{equation*}
 \liminf_{h \to \infty} \tau(h) 
  =  \liminf_{h \to \infty}  \int_{-\delta}^{0} ( \tau(h) - \tau(x+h)) \phi(x) \mathrm{d}x + \int_{-\delta}^{0} \tau(x+h) \phi(x) \mathrm{d}x  \geq -\varepsilon,
\end{equation*}
Since $\varepsilon$ was arbitrary, we get $\liminf_{h\to\infty} \tau(h) \geq 0$. By a similar reasoning (now with a test function having  $\operatorname*{supp}\phi\subset(0,\delta)$), we obtain that $\limsup_{h\to\infty} \tau(h) \leq 0$, which shows that $\tau(x)=o(1)$, $x\to\infty$. 

The converse is trivial, because $F$ must then be the sum of a global pseudofunction and the Fourier transform of a compactly supported distribution (and the latter is an entire function).
\end{proof}

Note that Theorem \ref{Fatou-Riesz1} directly follows from Theorem \ref{tth1}. In fact, for Theorem \ref{Fatou-Riesz1}(i) one can argue exactly as in proof of Theorem \ref{tbth3}. For Theorem \ref{Fatou-Riesz1}(ii),  one easily sees that (II) and (III) imply (\ref{ttheqbc1}) at every $it\in iE$ with $G(s)=\sum_{n=1}^{N}it_nb_{n}(s-it_n)^{-1}$ and $b=\sum_{n=1}^{N}b_n$. More generally, if the function $g$ in Theorem \ref{tth1} has a bounded primitive, then a sufficient condition for (\ref{ttheqbc1}) is that for every $t\in E$ one can find $M_{t}>0$ with
\begin{equation}
\label{tthbceq1}
\left|\int_{0}^{x}\tau(u)e^{-itu}\mathrm{d}u\right|\leq M_{t} \quad \mbox{ and }\quad \left|\int_{0}^{x}g(u)e^{-itu}\mathrm{d}u\right|\leq M_{t}, \quad x\in\mathbb{R}.
\end{equation}

Theorem \ref{tth1} actually provides a characterization of those slowly decreasing functions that belong to an interesting subclass of the slowly oscillating functions. Given $\tau$ and $\delta>0$, define the non-decreasing subadditive function
\begin{equation}
\label{eqsg}
\Psi(\delta):=\Psi(\tau,\delta)= \limsup_{x\to\infty}\sup_{h\in (0,\delta]}|\tau(x+h)-\tau(x)|.
\end{equation}
Note that a function is boundedly oscillating precisely when $\Psi$ is finite for some $\delta$, while it is slowly oscillating if $\Psi(0^{+})=\lim_{\delta\to0^{+}}\Psi(\delta)=0$. We shall call a function \emph{R-slowly oscillating} (regularly slowly oscillating) if $\limsup_{\delta\to0^{+}}\Psi(\delta)/\delta<\infty$. Since $\Psi$ is subadditive, it is easy to see the latter implies that $\Psi$ is right differentiable at $\delta=0$ and indeed $\Psi'(0^{+})=\sup_{\delta>0}\Psi(\delta)/\delta$. It turns out that a measurable function $\tau$ is R-slowly oscillating if and only if it admits the representation (\ref{ttheq2}). This fact is known (apply the representation theorem for E-regularly varying functions \cite[Thm.~2.2.6, p.~74]{binghambook} to $\exp\circ \tau \circ \log $), but we take a small detour to give a short proof with the aid of functional analysis: 

\begin{proposition}\label{proprrso}  If $\tau$ is R-slowly oscillating and measurable, then $\tau$ can be written as $(\ref{ttheq2})$ in a neighborhood of $\infty$ where $g\in L^{\infty}(\mathbb{R})\cap C(\mathbb{R})$ and $b$ is a constant.
\begin{proof}That the function (\ref{eqsg}) is globally $O(\delta)$ implies the existence of a sequence $\{x_n\}_{n=1}^{\infty}$ tending to $\infty$ such that $|\tau(x+h)-\tau(x)|\leq C/n$ for all $0<h\leq 1/n$ and $x\geq x_n$, where $C>\Psi'(0^{+})$ is a fixed constant. Modifying $\tau$ on a finite interval if necessary, we may assume that $x_{1}=1$ and that $\tau$ vanishes on $(-\infty,1]$.
Take a non-negative $\varphi\in\mathcal{D}(0,1)$ with $\int_{0}^{1}\varphi(x)\mathrm{d}x=1$ and define the sequence of $C^{\infty}$-functions $f_{n}(x):=\int_{-\infty}^{\infty}\tau(x+y)n\varphi(ny)\mathrm{d}y= \int_{0}^{1}\tau(x+y/n)\varphi(y)\mathrm{d}y$. They have support in $[0,\infty)$ and satisfy 
\begin{equation}\label{anothereq}
|f_m(x)-\tau(x)|\leq C/n \quad \mbox{for all } m\geq n \mbox{ and } x\geq x_n.
\end{equation}
Furthermore, 
\[
|f'_{n}(x)|= n\left| \int_{0}^{1}(\tau(x+y/n)-\tau(x))\varphi'(y)\mathrm{d}y\right|\leq C\|\varphi'\|_{L^{1}}=C'.
\]
 Applying the Banach-Alaoglu theorem to $\{f'_{n}\}_{n=1}^{\infty}$ (regarded as a sequence in the bidual of $C_{b}(\mathbb{R})$) and the Bolzano-Weierstrass theorem to $\{f_{n}(0)\}_{n=1}^{\infty}$, there are subsequences such that $f_{n_k}(0)\to b$ and $f'_{n_k}\to g$ weakly in the space of continuous and bounded functions. In particular $f'_{n_k}\to g$ pointwise, $|g(x)|\leq C'$ for all $x$, and $f_{n_k}(x)$ converges uniformly to $\int_{0}^{x}g(y)dy+b$ for $x$ on compacts. We obtain from the last convergence and (\ref{anothereq}) that $|\tau(x)-b-\int_{0}^{x}g(y)dy|\leq C/n_k$ for $x\geq x_{n_k}$. 
\end{proof}
\end{proposition}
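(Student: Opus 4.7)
The plan is to exploit the asymptotic Lipschitz nature built into R-slow oscillation, mollify $\tau$ to produce smooth approximants with uniformly bounded derivatives, and extract a subsequential limit by a compactness argument.

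First I would unpack the hypothesis. The function $\Psi$ is non-decreasing and subadditive on $(0,\infty)$, so the finiteness of $\limsup_{\delta\to 0^+}\Psi(\delta)/\delta$ together with subadditivity implies $\Psi(\delta)\leq C\delta$ for all $\delta>0$ and some constant $C$. Unwinding the definition of $\Psi$, this gives a sequence $x_n\to\infty$ such that $|\tau(x+h)-\tau(x)|\leq C(h+1/n)$ for every $x\geq x_n$ and every $h$ in a bounded range; in short, $\tau$ is \emph{asymptotically Lipschitz} with constant $C$.

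Next I would mollify. Modifying $\tau$ on a bounded set if necessary (which does not affect the conclusion), assume $\tau$ vanishes on $(-\infty,1]$. Pick a non-negative $\varphi\in\mathcal{D}(0,1)$ with $\int\varphi=1$ and set $f_n(x):=\int_0^1\tau(x+y/n)\varphi(y)\mathrm{d}y$. Each $f_n$ is smooth, supported in $[0,\infty)$, and the asymptotic Lipschitz estimate yields $|f_n(x)-\tau(x)|\leq C/n$ for $x\geq x_n$. An integration by parts rewrites $f_n'(x)=n\int_0^1(\tau(x+y/n)-\tau(x))\varphi'(y)\mathrm{d}y$, so $\|f_n'\|_{L^\infty}\leq C\|\varphi'\|_{L^1}$ uniformly in $n$.

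Finally, I would invoke compactness. The sequence $\{f_n'\}$ is bounded in $L^\infty(\mathbb{R})$ and $\{f_n(0)\}$ is bounded in $\mathbb{R}$. Passing to a subsequence, Bolzano--Weierstrass produces $f_{n_k}(0)\to b$ and a weak-$*$ limit $g$ of $f_{n_k}'$ with $\|g\|_{L^\infty}\leq C\|\varphi'\|_{L^1}$. Testing against $\mathbf{1}_{[0,x]}$ gives $f_{n_k}(x)=f_{n_k}(0)+\int_0^x f_{n_k}'(y)\mathrm{d}y\to b+\int_0^x g(y)\mathrm{d}y$ for every $x$. Combining with $|f_{n_k}(x)-\tau(x)|\leq C/n_k$ for $x\geq x_{n_k}$ then yields $\tau(x)=b+\int_0^x g(y)\mathrm{d}y+o(1)$, as desired.

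The main obstacle I anticipate is ensuring that the limit $g$ is simultaneously bounded \emph{and continuous}: a plain Banach--Alaoglu argument in $L^\infty$ delivers only boundedness. I would circumvent this by regarding each $f_n'$ as an element of $C_b(\mathbb{R})$ and taking the weak-$*$ limit inside the bidual of $C_b(\mathbb{R})$, which inherits a continuous bounded representative; alternatively, a second mollification applied to the $L^\infty$-limit would produce a continuous $g$ satisfying the same integral representation up to an $o(1)$ adjustment absorbed into $b$.
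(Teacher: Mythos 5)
Your main line coincides with the paper's own argument: subadditivity of $\Psi$ gives the global Lipschitz-type estimate and a sequence $x_n\to\infty$, the mollified functions $f_n(x)=\int_0^1\tau(x+y/n)\varphi(y)\mathrm{d}y$ satisfy $|f_n'|\leq C\|\varphi'\|_{L^1}$ and approximate $\tau$ beyond $x_n$, and a compactness argument plus the approximation estimate yields $\tau(x)=b+\int_0^x g(y)\mathrm{d}y+o(1)$. Two remarks on your execution. First, a presentational point: to conclude you must let $k\to\infty$ at a \emph{fixed} $x\geq x_n$, which requires the two-index estimate $|f_m(x)-\tau(x)|\leq C/n$ for all $m\geq n$ and $x\geq x_n$ (it does follow from your asymptotic Lipschitz bound, but your single-index statement $|f_{n_k}(x)-\tau(x)|\leq C/n_k$ for $x\geq x_{n_k}$ by itself does not allow the limit, since $x_{n_k}\to\infty$). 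Your use of weak-$*$ convergence in $L^{\infty}=(L^1)^*$ instead of the paper's bidual of $C_b(\mathbb{R})$ is in fact cleaner for extracting a subsequence (separable predual), and testing against $\mathbf{1}_{[0,x]}$ is fine.

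The genuine gap is the continuity of $g$, which is part of the statement, and neither of your proposed patches closes it. Option (a) is exactly the paper's device, but you assert without argument that a weak-$*$ limit taken in the bidual of $C_b(\mathbb{R})$ ``inherits a continuous bounded representative''; elements of $C_b(\mathbb{R})''$ are not in general represented by continuous functions on $\mathbb{R}$, so this is precisely the point that needs justification (it is also the delicate step in the paper's proof, where the limit is claimed to lie in $C_b(\mathbb{R})$ itself). Option (b) fails: replacing the $L^{\infty}$-limit $g$ by $g\ast\phi_{\varepsilon}$ changes $\int_0^x g$ by $\int_0^x(g\ast\phi_{\varepsilon}-g)(y)\mathrm{d}y$, which is uniformly $O(\varepsilon)$ but in general does \emph{not} converge as $x\to\infty$ (take an oscillating $g$), so it cannot be absorbed into the constant $b$; you would only obtain the representation up to a small uniform error, not the exact $o(1)$ statement with continuous $g$. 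A workable repair in your spirit: since the $f_{n}$ are uniformly Lipschitz, Arzel\`a--Ascoli gives a Lipschitz $F$ with $\tau=F+o(1)$ on $[x_n,\infty)$ up to $C/n$; then smooth $F$ by an averaging window whose width tends to $0$ as $x\to\infty$, which produces a $C^1$ function differing from $F$ by $o(1)$ and having bounded continuous derivative.
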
 

Summarizing, part of Theorem \ref{tth1} might be rephrased as follows: A (measurable) slowly decreasing function $\tau$ is R-slowly oscillating if and only if it has convergent Laplace transform on $\Re e\:s>0$ such that (\ref{ttheq3}) has local pseudofunction boundary behavior on $\Re e\:s=0$ for some constant $b$ and some $G$ with global pseudomeasure boundary behavior. 

We now obtain an intermediate Tauberian theorem between Theorem \ref{tbth2} and Theorem \ref{tth1}, where the requirement on the Laplace transform in Theorem \ref{tth1} is relaxed to pseudofunction boundary behavior at $s=0$, but the Tauberian condition is strengthened to very slow decrease \cite{korevaarbook}. 
A real-valued 
function $\tau$ is said to be \emph{very slowly decreasing} if there is $\delta>0$ such that
\begin{equation}
\label{ttheq5}
\liminf_{x \to \infty} \inf_{h \in [0,\delta]} \tau(h+x) - \tau(x) \geq 0.
\end{equation}
As usual, the notion makes sense for complex-valued functions if we require both real and imaginary parts to be very slowly decreasing. 
Our result also involves very slow oscillation.  
 A function is called \emph{very slowly oscillating} if both $\tau$ and $-\tau$ are very slowly decreasing; or equivalently if the function (\ref{eqsg}) vanishes at some $\delta$. (This actually implies that $\Psi(\delta)=0$ for all $\delta>0$, due to subadditivity). For a measurable function $\tau$, being very slowly oscillating is equivalent to $\exp\circ \tau \circ \log $ being a Karamata slowly varying function, i.e., to the apparently weaker property
$$
\tau(x+h)=\tau(x)+o_{h}(1), \quad x\to\infty,
$$  
for each $h>0$, as follows from the well known uniform convergence theorem \cite[p.~6]{binghambook}. It also follows \cite[p.~12]{binghambook} that any (measurable) function $\tau$ is very slowly oscillating if and only if it  admits the representation
\begin{equation}
\label{repvso}
\tau(x)=b+\int_{0}^{x}g(y)\mathrm{d}y +o(1), \quad \mbox{with } \lim_{y\to\infty}g(y)=0
\end{equation}
and  a constant $b$, for $x$ in a neighborhood of $\infty$. Naturally, one can also apply the same proof method from Proposition \ref{proprbo} to show that the function $g$ in (\ref{repvso}) may be chosen to be additionally $C^{\infty}$ with all derivatives tending to $0$ at $\infty$. 

After these preparatory remarks, we are ready to state the second main Tauberian theorem from this section. 

\begin{theorem} \label{tth2} Let $\tau \in L^{1}_{loc}(\mathbb{R})$ vanish on $(-\infty,0)$, have convergent Laplace transform $(\ref{ttheq1})$, and be such that $\tau$ is very slowly decreasing. Then, the function $\tau$ is very slowly oscillating if and only if there are a constant $b'$ and $G(s)$ analytic on the intersection of $\Re e\:s>0$ with a (complex) neighborhood of $s=0$ such that  
\begin{equation}
\label{ttheq7}
\mathcal{L}\left\{\tau;s\right\}-\frac{b'}{s}-\frac{G(s)}{s} \quad\mbox{and} \quad G(s)
\end{equation}
both admit pseudofunction boundary behavior at $s=0$. 

Moreover, $\tau$ has the asymptotic behavior $(\ref{repvso})$ with $g$ given in terms of the Fourier transform by the distribution
\begin{equation}
\label{ttheq8}
\hat{g}(t)=\lim_{\sigma\to 0^{+}}G(\sigma+it) \quad \mbox{ in }\mathcal{D}'(-\lambda,\lambda),
\end{equation}
for sufficiently small $\lambda$, and the constant 
\begin{equation}
\label{ttheqcst}
b=b'+\lim_{\sigma\to0^{+}}\int_{0}^{\infty}g(-x)e^{-\sigma x}\mathrm{d}x=b'+\lim_{\sigma\to0^{+}}\left(G(\sigma)-\int_{0}^{\infty}g(x)e^{-\sigma x}\mathrm{d}x\right).
\end{equation}
 \end{theorem}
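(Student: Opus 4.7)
My plan is to prove the two implications separately. The forward direction is essentially a verification: assume $\tau$ is very slowly oscillating and use the representation recalled just before the statement to write $\tau(x)=b+\int_0^x g(y)\,dy+o(1)$ with $g\in L^\infty(\mathbb{R})$ and $g(y)\to 0$ as $y\to\infty$. After modifying $\tau$ on a bounded interval---which only adds an entire function to $\mathcal{L}\{\tau;s\}$ and so preserves local boundary behavior---I may assume $g$ is supported on $[0,\infty)$ and that $\tau(x)-b-\int_0^x g(y)\,dy$ lies in $C_0(\mathbb{R})$. Setting $b'=b$ and $G(s)=\mathcal{L}\{g;s\}$, both $G$ and $\mathcal{L}\{\tau;s\}-b'/s-G(s)/s$ are Laplace transforms of $C_0$-functions, so by Proposition \ref{cpfp1} they have local pseudofunction boundary behavior on all of $i\mathbb{R}$, in particular at $s=0$.

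For the backward direction I would bootstrap from Theorem \ref{tbth2}. Since local pseudofunction behavior implies local pseudomeasure behavior and any constant is locally a pseudofunction, applying that theorem to $\tilde G(s):=b'+G(s)$, together with the edge-of-the-wedge argument in its proof, yields $\tau(x)=\int_0^x\tilde g(y)\,dy+O(1)$ with $\tilde g\in L^\infty(\mathbb{R})$ supported on $[0,\infty)$ and $\tilde G(s)=\mathcal{L}\{\tilde g;s\}$ after absorbing an analytic correction. Set $\tau_1(x):=\tau(x)-b'-\int_0^x\tilde g(y)\,dy$ (extended by $0$ for $x<0$). Very slow decrease of $\tau$ combined with $\tilde g\in L^\infty$ makes $\tau_1$ boundedly decreasing, hence bounded by Theorem \ref{tbth1}, while $\mathcal{L}\{\tau_1;s\}=F(s)$ retains local pseudofunction behavior at $s=0$. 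The whole backward direction thus reduces to showing $\tau_1(x)\to 0$ as $x\to\infty$; the formula $(\ref{ttheqcst})$ for $b$ would then follow by taking $\sigma\to 0^+$ in the Abel identity that splits $\int_{-\infty}^\infty g(x)e^{-\sigma x}\,dx$ into its positive and negative contributions.

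To establish $\tau_1(x)\to 0$, my plan is to mimic the final convolution-average argument from Theorem \ref{tth1}: local pseudofunction behavior of $F$ at $s=0$ gives $\int\tau_1(y+h)\psi(y)\,dy\to 0$ as $h\to\infty$ for every $\psi\in\mathcal{F}(\mathcal{D}(-\lambda,\lambda))$, and I would squeeze $\tau_1(h)$ between two such averages by picking non-negative $\psi$ with $\int\psi=1$ concentrated to one side of $0$ and applying the one-sided inequality $\tau(x+y)-\tau(x)\geq -o(1)$ from very slow decrease for $y\geq 0$, together with its mirror $\tau(x+y)-\tau(x)\leq o(1)$ for $y\leq 0$. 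The main obstacle, compared with Theorem \ref{tth1}, is that pseudofunction behavior is available only at the single point $s=0$, so by Paley--Wiener $\psi$ cannot be chosen compactly supported in space but only as a Schwartz function with Fourier support in $(-\lambda,\lambda)$; the tail contributions must therefore be controlled via boundedness of $\tau_1$ and rapid Schwartz decay of $\psi$, and the additional $\int_0^y\tilde g(u+h)\,du$ increment in the decomposition of $\tau_1(y+h)-\tau_1(h)$ must be absorbed using the parallel convolution-average condition on $\tilde g$ coming from local pseudofunction behavior of $G$ at $s=0$.
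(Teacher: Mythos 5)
Your forward direction and your final convolution-average squeeze are essentially the paper's own argument: the paper likewise derives $\int_{-\infty}^{\infty}\tau(x+h)\psi(x)\,\mathrm{d}x=o(1)$ for $\psi\in\mathcal{F}(\mathcal{D}(-\lambda,\lambda))$ and then evaluates at shifts $t\pm X$ chosen according to the sign of the quantity being estimated (your ``kernel concentrated on one side'' in different clothing), controlling the unavoidable tails --- $\psi$ cannot have compact support --- by the a priori bound and letting $X\to\infty$ at the end.

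The genuine gap is in your reduction via Theorem \ref{tbth2}. That theorem exploits only pseudomeasure behavior and returns some $\tilde g\in L^{\infty}$ determined solely through the $O(1)$-relation (\ref{repbo}) and through $\hat{\tilde g}$ on $(-\lambda,\lambda)$; it need not be $o(1)$ at infinity, and its construction (cutting off a local pseudomeasure) generically gives it bounded high-frequency oscillations. Consequently $\tau_1=\tau-b'-\int_0^{x}\tilde g(y)\,\mathrm{d}y$ is only boundedly decreasing, and, more seriously, the target ``$\tau_1(x)\to0$'' is not the right statement: it can fail for an admissible choice of $\tilde g$ (the decomposition is far from unique), and even when it holds it only yields $\tau=b'+\int_0^{x}\tilde g+o(1)$ with $\tilde g\in L^{\infty}$, i.e.\ R-slow oscillation rather than (\ref{repvso}). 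The proposed repair --- absorbing the increments $\int_0^{y}\tilde g(u+h)\,\mathrm{d}u$ by the convolution-average condition on $\tilde g$ --- does not work: after Fubini these increments are averages of $\tilde g$ against compactly supported, hence non-band-limited, weights, while pseudofunction behavior of $G$ at $s=0$ only controls averages against kernels in $\mathcal{F}(\mathcal{D}(-\lambda,\lambda))$; a bounded component of $\tilde g$ oscillating at a frequency $\mu\geq\lambda$ is invisible to all such averages yet produces increments of size comparable to $1/\mu$, independent of $h$, so the squeeze retains an error that does not tend to zero. The missing idea --- and precisely where the pseudofunction (rather than pseudomeasure) hypothesis on $G$ must enter --- is the paper's preliminary normalization: take a global pseudofunction representative $\hat{g_1}$, with $g_1\in L^{\infty}$ vanishing at $\pm\infty$, of the boundary value of $G$ near $0$, put $g=g_1\mathbf{1}_{[0,\infty)}$, and apply the edge-of-the-wedge theorem to see that $G(s)-\mathcal{L}\{g;s\}$ extends analytically across $s=0$; absorbing its value at $0$ into the constant yields exactly $b$ of (\ref{ttheqcst}) (without this shift $\mathcal{L}\{\tau_1;s\}$ keeps a $1/s$ pole and cannot be a pseudofunction at $0$, so your phrase ``after absorbing an analytic correction'' hides the same bookkeeping). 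After this reduction $g=o(1)$, hence $\tau-b-\int_0^{x}g$ is still very slowly decreasing and bounded by Theorem \ref{tbth1}, and your squeeze then closes the proof with no extra absorption needed.
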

\begin{proof} The asymptotic estimate (\ref{repvso}) easily yields local pseudofunction boundary behavior of (\ref{ttheq7}) with $b=b'$ and $G(s)=\int_{0}^{\infty}g(x)e^{-s x}\mathrm{d}x$. Conversely, applying again the edge-of-the-wedge theorem, we obtain that $G(s)-\int_{0}^{\infty}g(x)e^{-sx}\mathrm{d}x$, $\Re e\:s>0$, and $\int_{0}^{\infty}g(-x)e^{s x}\mathrm{d}x$, $\Re e\:s<0$, are analytic continuations of each other through $i(-\lambda,\lambda)$, with $\lambda$ sufficiently small, which gives in particular the existence of $b$. We can thus suppose that $g$ given by (\ref{ttheq8}) has support in $[0,\infty)$, $g(x)=o(1)$, that $G$ is its Laplace transform, and that $b'=b$. Applying Theorem \ref{tbth2}, we obtain that $\tau$ satisfies (\ref{repbo}). Replacing $\tau$ by the very slowly decreasing and bounded function $\tau(x)-b-\int_{0}^{x}g(y)\mathrm{d}y$, we may assume that $b=0$ and $G=0$. So, since $\tau(x)=O(1)$, $\tau$ is actually a tempered distribution and our hypothesis on the Laplace transform becomes $\hat{\tau}$ coincides with a pseudofunction on $(-\lambda,\lambda)$. Thus, we obtain that 
\begin{equation} \label{ttheq9}
 \int_{-\infty}^{\infty} \tau(x+h) \psi(x)\mathrm{d}x = o(1), \quad h\to\infty,
\end{equation}
for any $\psi \in \mathcal{F}(\mathcal{D}(-\lambda,\lambda))$. We may assume that $\tau$ is globally bounded, say $|\tau(x)|\leq M$, for all $x>0$.
We choose $\psi$ in (\ref{ttheq9}) to be a non-negative and even test function with $\int_{-\infty}^{\infty}\psi(x)\mathrm{d}x=1$. Fix a large $X$ ensuring $\int^{\infty}_{X} \psi(x)\mathrm{d}x < 1/4$. Let $\varepsilon>0$, the very slow decrease of $\tau$  (cf. (\ref{ttheq5})) ensures that 
\begin{equation}
\label{ttheq10}
\tau(y)-\tau(u)\geq -\varepsilon(y-u+1), \quad \mbox{for } y\geq u\geq N,
\end{equation}
for some $N$. We keep 
 $t>N+2X$. Set $h(t) = t + X$ if $\tau(t) > 0$ and $h(t) = t- X$ if $\tau(t) < 0$. Using (\ref{ttheq10}), we deduce the inequality
\begin{align*}
 \left|\int^{\infty}_{-\infty} \tau(x+h(t)) \psi(x) \mathrm{d}x\right| & \geq -\left(\int^{-X}_{-\infty} + \int^{\infty}_{X}\right) M \psi(x) \mathrm{d}x + \left|\int^{X}_{-X} \tau(x+h(t)) \psi(x) \mathrm{d}x\right| \\
& \geq -2M\int_{X}^{\infty}\psi(x)\mathrm{d}x +\frac{|\tau(t)|}{2}-(2X+1)\varepsilon,
\end{align*}
which, in view of (\ref{ttheq9}), yields $\limsup_{t\to\infty}|\tau(t)|\leq 2(\varepsilon (2X+1)+2M\int_{X}^{\infty}\psi(x)\mathrm{d}x)$. Since $\varepsilon$ was arbitrary,
$$
\limsup_{t\to\infty}|\tau(t)|\leq 4M\int_{X}^{\infty}\psi(x)\mathrm{d}x.
$$
We can now take $X\to\infty$ and obtain $\lim_{t\to\infty}\tau(t)=0$.\end{proof}

Note that Theorem \ref{tth2} applies to the case when

\begin{equation*}
\mathcal{L}\{\tau;s\}- \sum_{n=1}^{m}\frac{c_n+d_n\log^{k_n}\left(1/s\right)}{s^{\beta_n+1}}
\end{equation*}
has pseudofunction boundary behavior at $ s=0$, provided that $\beta_1\leq \dots\leq \beta_m\in [0,1)$ and $k_1,\dots,k_{m}\in\mathbb{Z}_{+}$ and $\tau$ is very slowly decreasing. In this case the conclusion reads
\begin{equation*}
\tau(x)= \sum_{n=1}^{N} x^{\beta_n}\left(\frac{c_n}{\Gamma(\beta_n+1)}+d_n\sum_{j=0}^{k_n} \binom{k_n}{j}D_{j}(\beta_n+1)\log^{k_n-j}x \right)+o(1).
\end{equation*}

The next result generalizes Korevaar's distributional version of the Wiener-Ikehara theorem \cite{korevaar2005}.

\begin{theorem}\label{Wiener-Ikehara} Let $S$ be a non-decreasing function on $[0,\infty)$ with $S(x) = 0$ for $x < 0$ such that
\begin{equation*}
 \mathcal{L}\{\mathrm{d}S;s\}= \int^{\infty}_{0^{-}} e^{-sx} \mathrm{d}S(x) \text{ converges for } \Re e \: s > \alpha>0.
\end{equation*}
Suppose that there are a closed null set $E$, constants $r_0,r_1,\dots,r_N\in\mathbb{R}$, $\theta_1,\dots,\theta_N\in\mathbb{R}$, and $t_1,\dots,t_{N}>0$ such that:
\begin{enumerate}
\item [(I)]  The analytic function 
\begin{equation}
\label{wieq1}
\mathcal{L}\{\mathrm{d}S;s\} - \frac{r_0}{s-\alpha} - \sum_{n=1}^{N}r_{n}\left(\frac{e^{i\theta_n}}{s-\alpha-it_n}+\frac{e^{-i\theta_n}}{s-\alpha+it_n}\right)
\end{equation}
admits local pseudofunction boundary behavior on the open subset $\alpha+i(\mathbb{R}\setminus E)$ of the line $\Re e\:s=\alpha$,  
\item [(II)] $E\cap \left\{0,t_1,\dots,t_N\right\} =\emptyset,$ and
 \item [(III)] for every $t\in E$ there is $M_t>0$ such that
\begin{equation}
\label{wieq2}
\sup_{x>0}\left| \int_{0}^{x}e^{-\alpha u-itu}\mathrm{d}S(u)\right|<M_{t}.
\end{equation}
\end{enumerate}
Then 
\begin{equation}
\label{wieq3}
  S(x)= e^{\alpha x}\left(\frac{r_0}{\alpha}+2\sum_{n=1}^{N}\frac{ r_n\cos (t_n x+\theta_n-\arctan(t_n/\alpha))}{\sqrt{\alpha^{2}+t_{n}^{2}}} +o(1)\right), \quad x\to\infty.
\end{equation}

Conversely, if $S$ has asymptotic behavior $(\ref{wieq3})$, then $(\ref{wieq1})$ has local pseudofunction boundary behavior on the whole line $\Re e\: s=\alpha$. 
\end{theorem}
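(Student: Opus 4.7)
The plan is to reduce to Theorem \ref{tth1} through the auxiliary function $\tau(x):=S(x)e^{-\alpha x}$ (with $\tau(x)=0$ for $x<0$). An integration by parts gives $\mathcal{L}\{\tau;s\}=\mathcal{L}\{\mathrm{d}S;s+\alpha\}/(s+\alpha)$ on $\Re e\,s>0$. Setting $R_n=r_n/\sqrt{\alpha^2+t_n^2}$ and $\phi_n=\theta_n-\arctan(t_n/\alpha)$, the shift of (\ref{wieq1}) by $\alpha$ combined with partial fractions rewrites the explicit singular part in the form $b/s + G(s)/s$, where $b=r_0/\alpha+2\sum_n R_n\cos\phi_n$ and $G(s)=\mathcal{L}\{g;s\}$ for the bounded function $g(x)=-2\sum_n t_n R_n\sin(t_n x+\phi_n)$; crucially, $b+\int_0^x g(y)\,\mathrm{d}y$ coincides with the oscillating part of (\ref{wieq3}).

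I would first establish that $\tau\in L^\infty$ via Theorem \ref{tbth1} under $(T.2)$ with $\beta=\alpha$, which is available because $S=e^{\alpha\cdot}\tau$ is non-negative and non-decreasing. The required pseudomeasure boundary behavior of $\mathcal{L}\{\tau;s\}$ at $s=0$ follows from hypothesis (I), the assumption $0\notin E$, and the facts that $1/(s+\alpha)$ is smooth on $\Re e\,s=0$ (hence a multiplier for local pseudomeasures) and that the singular contributions at $\pm it_n$ are analytic at $s=0$. Once $\tau$ is bounded, the monotonicity of $S$ forces
\[
\tau(x+h)-\tau(x)\ \geq\ \tau(x)\bigl(e^{-\alpha h}-1\bigr)\ \geq\ -\alpha h\,\|\tau\|_{\infty},
\]
so $\tau$ is slowly decreasing.

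Next I would apply Theorem \ref{tth1}. By construction, $F(s):=\mathcal{L}\{\tau;s\}-b/s-G(s)/s$ inherits local pseudofunction boundary behavior on $i(\mathbb{R}\setminus E)$ from (I), with the smooth factor $1/(s+\alpha)$ absorbed into the multiplier action. The delicate step is the verification of (\ref{ttheqbc1}) at each $it\in iE$. For $t\in E$, I introduce
\[
q_t(x)\ :=\ \int_{0}^{x} e^{-(\alpha+it)u}\,\mathrm{d}S(u),\qquad x\geq 0,
\]
which is bounded by $M_t$ by (III). A short integration by parts gives $\mathcal{L}\{\mathrm{d}S;s+\alpha\}=(s-it)\,\mathcal{L}\{q_t;s-it\}$, and therefore
\[
\frac{\mathcal{L}\{\tau;s\}}{s-it}\ =\ \frac{\mathcal{L}\{q_t;s-it\}}{s+\alpha}.
\]
Because $q_t\in L^\infty(\mathbb{R})$, its Fourier transform is a local pseudomeasure everywhere, so the right-hand side has pseudomeasure boundary behavior at $s=it$; multiplication by the smooth $1/(s+\alpha)$ preserves this. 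As $t\notin\{0,\pm t_n\}$, the rational correction $(b+G(s))/(s(s-it))$ is analytic at $it$ and its subtraction does not affect pseudomeasure behavior, which confirms (\ref{ttheqbc1}).

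Theorem \ref{tth1} then yields $\tau(x)=b+\int_0^x g(y)\,\mathrm{d}y+o(1)$, which after multiplication by $e^{\alpha x}$ produces (\ref{wieq3}). The converse is a direct computation: assuming (\ref{wieq3}), the explicit trigonometric sum times $e^{\alpha x}$ has Laplace-Stieltjes transform reproducing precisely the subtracted singular terms in (\ref{wieq1}), while the $e^{\alpha x}o(1)$ remainder corresponds, via the same change of variables $\tau=Se^{-\alpha\cdot}$, to a function to which the converse direction of Theorem \ref{tth1} applies, giving local pseudofunction boundary behavior on the entire line $\Re e\,s=\alpha$. The main technical obstacle is converting the bound (\ref{wieq2}) into the pseudomeasure statement (\ref{ttheqbc1}); this is handled precisely by the integration-by-parts identity for $q_t$ coupled with the local pseudomeasure character of Fourier transforms of bounded functions.
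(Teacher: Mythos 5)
Your approach is essentially the paper's: set $\tau(x)=S(x)e^{-\alpha x}$, invoke Theorem \ref{tbth1} under $(T.2)$ to get $\tau\in L^\infty$, deduce slow decrease from monotonicity of $S$, and apply Theorem \ref{tth1} after rewriting the subtracted singularities as $b/s+G(s)/s$ (the paper uses the equivalent Theorem \ref{Fatou-Riesz1}(ii) after converting (\ref{wieq2}) into the bound $\left|\int_0^x\tau(u)e^{-itu}\mathrm{d}u\right|=O_t(1)$ by integration by parts, whereas you verify (\ref{ttheqbc1}) directly through the $q_t$ identity; these are cosmetically different but equivalent).

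However, there is a genuine gap in the verification of (\ref{ttheqbc1}). You invoke ``$t\notin\{0,\pm t_n\}$'' to dispose of the rational correction, but hypothesis (II) only excludes $\{0,t_1,\dots,t_N\}$ from $E$; nothing in the statement rules out $-t_n\in E$. If $t=-t_n\in E$, then $G(s)$ has a simple pole at $s=it$ and $G(s)/(s(s-it))$ acquires a double pole there; a double pole (like $1/(s-it)^2$) does \emph{not} have pseudomeasure boundary behavior, so your argument breaks. The paper dispatches this at the very start: since $S$ is real-valued and the subtracted rational part in (\ref{wieq1}) is invariant under $s\mapsto\overline{s}$ followed by conjugation, local pseudofunction boundary behavior at $\alpha+it_0$ is equivalent to that at $\alpha-it_0$; one may therefore replace $E$ by the symmetric null set $E\cap(-E)$, for which (II) yields $\{0,\pm t_1,\dots,\pm t_N\}\cap E=\emptyset$. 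You should add this symmetrization step. A minor wording slip elsewhere: $(b+G(s))/(s(s-it))$ is not ``analytic at $it$'' even for good $t$, it has a simple pole there; but a simple pole has pseudomeasure boundary behavior, so the conclusion you draw is still fine once $-t_n\notin E$ is secured.
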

\begin{remark}\label{tthr1} The conditions (II) and (III) above can be replaced by the weaker assumption that $F(s-\alpha)$, with $F(s)$ given by (\ref{wieq1}), satisfies (\ref{ttheqbc1}) for each $t\in E$. 
\end{remark}
\begin{proof} We may assume that $-t_1,-t_2,\dots,-t_N\notin E$ because (\ref{wieq1}) has also local pseudofunction boundary behavior at $\alpha-it_n$ due to the fact that $S$ is real-valued.  Set $\tau(x)=e^{-\alpha x}S(x)$, this function $\tau$ fulfills $(T.2)$ from Theorem \ref{tbth1}. Write $\theta'_n=\arctan(t_n/\alpha)$. For $\Re e\:s>0$,
\begin{align*}
&\mathcal{L}\{\tau;s\}-\frac{r_0}{\alpha s}-\sum_{n=1}^{N}\frac{r_n}{\sqrt{\alpha^{2}+t_n^2}}\left(\frac{e^{i(\theta_n-\theta'_n)}}{s-it_n}+\frac{e^{-i(\theta_n-\theta'_n)}}{s+it_n}\right)
\\
&
=\frac{1}{s+\alpha}\left(\mathcal{L}\{\mathrm{d}S;s+\alpha\}-\frac{r_0}{s}-\sum_{n=1}^{N}\frac{r_{n}e^{i\theta_n}}{s-it_n}+\frac{r_{n}e^{-i\theta_n}}{s+it_n}\right)
\\
&
\quad\quad -\frac{1}{s+\alpha}\left(\frac{r_0}{\alpha}+\sum_{n=1}^{N}2 r_n\Re e\left(\frac{e^{i\theta_n}}{\alpha+it_n}\right)\right),
\end{align*}
which has local pseudofunction boundary behavior on $i(\mathbb{R}\setminus E)$  because $1/(\alpha+it)$ is smooth and $C^{\infty}$-functions are multipliers for local pseudofunctions. Since $1/s$ has global pseudomeasure boundary behavior, we conclude from Theorem \ref{tbth1} that $\tau(x)=O(1)$. 
It follows that $\tau(x+h)-\tau(x)\geq -\tau(x)(1-e^{-\alpha h})\gg -h$, and thus $\tau$ is slowly decreasing. Note also that, by (\ref{wieq2}),
$$
\left|\int_{0}^{x}\tau(u)e^{-itu}\mathrm{d}u\right|=\frac{1}{\sqrt{\alpha^{2}+t^{2}}}\left|-e^{-itx}\tau(x)+\int_{0}^{x}e^{-(\alpha+it)u}\mathrm{d}S(u)\right|= O_{t}(1),
$$
for each $t\in E$.
Thus, Theorem \ref{Fatou-Riesz1} (or Theorem \ref{tth1}) implies that 
$$\lim_{x\to\infty} \tau(x)-\frac{r_0}{\alpha}-2\sum_{n=1}^{N}\frac{ r_n\cos (t_n x+\theta_n-\arctan(t_n/\alpha))}{\sqrt{\alpha^{2}+t_{n}^{2}}}=0,
$$ which completes the proof.
\end{proof}

As indicated at the introduction, Theorem \ref{K-ABth} is contained in Theorem \ref{Fatou-Riesz1}. The next corollary gives a more general version that applies to Laplace transforms of functions that are bounded from below.

\begin{corollary}
\label{improvedK-ABth}
Let $\rho\in L^{1}_{loc}(\mathbb{R})$ be bounded from below, vanish on on $(-\infty,0)$, and have convergent Laplace transform 
 for $\Re e\:s>0$.
Suppose that there is closed null set $0\notin E\subset\mathbb{R}$ such that  
\begin{equation}
\label{ttheq17}
\sup_{x>0}\left| \int_{0}^{x}\rho(u)e^{-itu}\mathrm{d}u\right|<M_{t}<\infty, 
\end{equation}
for each  $t\in E$. If there is a constant $\hat{\rho}(0)\in\mathbb{C}$ such that
\begin{equation}
\label{ttheq18} \frac{\mathcal{L}\{\rho; s\}-\hat{\rho}(0)}{s}
\end{equation}
has local pseudofunction boundary behavior on $i(\mathbb{R}\setminus E)$, then the (improper) integral $\int_{0}^{\infty}\rho(u)\mathrm{d}u$ converges and 
 \begin{equation}
\label{ttheq19} \int_{0}^{\infty}\rho(u)\mathrm{d}u=\hat{\rho}(0).
\end{equation}
\end{corollary}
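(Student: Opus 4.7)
The plan is to reduce the corollary to Theorem \ref{Fatou-Riesz1}(ii) applied to the primitive of $\rho$. Set $\tau(x):=\int_{0}^{x}\rho(u)\mathrm{d}u$ for $x\geq 0$ and $\tau(x):=0$ for $x<0$. Because $\rho$ is bounded from below, say $\rho\geq -C$, one has $\tau(x+h)-\tau(x)\geq -Ch$ for all $x$ and $h\geq 0$, so $\tau$ is slowly decreasing (in particular boundedly decreasing). Moreover, since $\rho+C\geq 0$ has absolutely convergent Laplace transform on $\Re e\:s>0$ (use $\mathcal{L}\{\rho+C;s\}=\mathcal{L}\{\rho;s\}+C/s$ and monotone convergence), one gets the crude bound $\tau(x)+Cx=O(e^{\sigma x})$ for each $\sigma>0$, which justifies the integration by parts yielding the identity $\mathcal{L}\{\tau;s\}=\mathcal{L}\{\rho;s\}/s$ for $\Re e\:s>0$.

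Now introduce $\tilde\tau(x):=\tau(x)-\hat\rho(0)$ for $x\geq 0$ and $\tilde\tau(x):=0$ elsewhere; it is still slowly decreasing and its Laplace transform equals $(\mathcal{L}\{\rho;s\}-\hat\rho(0))/s$ on $\Re e\:s>0$. By \eqref{ttheq18} the function $\mathcal{L}\{\tilde\tau;s\}$ has local pseudofunction boundary behavior on $i(\mathbb{R}\setminus E)$. Since $0\notin E$, this includes pseudofunction (and hence pseudomeasure) boundary behavior at $s=0$, so Theorem \ref{tbth1} under $(T.1)$ gives $\tilde\tau(x)=O(1)$, i.e., $\tau(x)=O(1)$.

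The key step is then to convert the one-sided boundedness assumption \eqref{ttheq17} on the ``half-Fourier transform'' of $\rho$ into the hypothesis (II) of Theorem \ref{Fatou-Riesz1}(ii) for $\tilde\tau$. For $t\in E$ one has $t\neq 0$, and integration by parts yields
\[
(it)\int_{0}^{x}\tau(u)e^{-itu}\mathrm{d}u=\int_{0}^{x}\rho(u)e^{-itu}\mathrm{d}u-\tau(x)e^{-itx},
\]
whose right-hand side is bounded in $x$ thanks to \eqref{ttheq17} and the previously obtained bound $\tau=O(1)$. Since adding the constant $-\hat\rho(0)$ only modifies $\int_{0}^{x}\tilde\tau(u)e^{-itu}\mathrm{d}u$ by the bounded quantity $\hat\rho(0)(e^{-itx}-1)/(it)$, condition (II) of Theorem \ref{Fatou-Riesz1}(ii) holds for $\tilde\tau$. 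Conditions (I) and (III) of that theorem are satisfied with $N=0$ (the sum is empty and $E\cap\emptyset=\emptyset$), so we conclude $\tilde\tau(x)=o(1)$, which is precisely \eqref{ttheq19}. The main obstacle is exactly the bootstrap in the second paragraph: integration by parts only gives a useful Tauberian bound on $\int_{0}^{x}\tau(u)e^{-itu}\mathrm{d}u$ once boundedness of $\tau$ is known, and this is precisely what Theorem \ref{tbth1} provides a priori from the pseudomeasure behavior at $s=0$.
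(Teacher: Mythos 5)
Your proposal is correct and follows essentially the same route as the paper: deduce slow decrease of the primitive $\tau$ from the lower bound on $\rho$, obtain $\tau=O(1)$ from Theorem \ref{tbth1} via the pseudomeasure behavior at $s=0$, transfer the bound \eqref{ttheq17} to $\int_{0}^{x}\tau(u)e^{-itu}\mathrm{d}u$ by integration by parts, and conclude with Theorem \ref{tth1} (of which Theorem \ref{Fatou-Riesz1}(ii), the form you invoke, is a special case). The only cosmetic difference is that you apply Theorem \ref{tbth1} to $\tau-\hat{\rho}(0)$ and spell out the Laplace-transform identity $\mathcal{L}\{\tau;s\}=\mathcal{L}\{\rho;s\}/s$, which the paper leaves implicit.
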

\begin{remark}\label{tthr2}
We have chosen the suggestive notation $\hat{\rho}(0)$ in (\ref{ttheq18}) and (\ref{ttheq19}) because, as follows from \cite[Thm.~10, p.~569]{vindas-estrada2007}, the relation (\ref{ttheq19}) implies that $\hat{\rho}(0)$ is precisely the distributional point value (in the sense of \L ojasiewicz) of the Fourier transform of $\rho$ at the point $t=0$. It should also be noticed that (\ref{ttheq17}) above actually becomes equivalent to 
\begin{equation}
\label{ttheq21} \frac{\mathcal{L}\{\rho,s\}}{s-it}  \quad \mbox{has pseudomeasure boundary behavior at } it,
\end{equation}
as follows from Theorem \ref{tbth1} because $\int_{0}^{x}e^{-itu}\rho(u)\mathrm{d}u$ must boundedly oscillating under the hypotheses of Corollary \ref{improvedK-ABth}; in fact, $\int_{0}^{x}\rho(u)\mathrm{d}u$ is bounded (see below) and the claim follows from integration by parts.
\end{remark}
\begin{proof} Boundedness from below of $\rho$ gives in particular that $\tau(x)=\int_{0}^{x}\rho(u)\mathrm{d}u$ is slowly decreasing. We obtain from Theorem \ref{tbth1} that $\tau$ is a bounded function. This allows us to apply integration by parts in (\ref{ttheq17}) to conclude that 
$$\left|\int_{0}^{x} \tau(u)e^{-it u}\mathrm{d}u\right|\leq \frac{1}{t}\left(\|\tau\|_{L^{\infty}(\mathbb{R})} +M_t\right).$$
The rest follows from Theorem \ref{tth1}.
\end{proof}

In the case of Laplace transforms of bounded functions, we also provide a finite version of Corollary \ref{improvedK-ABth}. We only state the result for $s=0$, but of course other boundary points $s=it_0$ can be treated by replacing $\rho(x)$ by $e^{-it_0 x}\rho(x)$.
 
\begin{theorem}
\label{finiteform}
 Let $\rho\in L^{1}_{loc}(\mathbb{R})$ vanish on $(-\infty,0)$ and be such that
 \begin{equation}
\label{eqbdf}
M:=\limsup_{x\to\infty} |\rho(x)|<\infty.
\end{equation}
Suppose that there are $\lambda>0$, a closed null set $0\not\in E\subset\mathbb{R}$ such that $(\ref{ttheq17})$ holds for each  $t\in E\cap(-\lambda,\lambda)$,
and a constant $\hat{\rho}(0)$ such that
\begin{equation}
\label{ttheq22} \frac{\mathcal{L}\{\rho; s\}-\hat{\rho}(0)}{s}
\end{equation}
has local pseudofunction boundary behavior on $i((-\lambda,\lambda)\setminus E)$. Then, there is an absolute constant $0<\mathfrak{C}\leq 2 $ such that  
\begin{equation}
\label{ttheq23} \limsup_{x\to\infty}\left|\int_{0}^{x}\rho(u)\mathrm{d}u-\hat{\rho}(0)\right|\leq\frac{\mathfrak{C}M}{\lambda} .
\end{equation}
\end{theorem}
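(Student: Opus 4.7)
The plan is to carry out a localized, quantitative analogue of the proof of Corollary \ref{improvedK-ABth}. Set $\tau(x):=\int_{0}^{x}\rho(u)\mathrm{d}u-\hat{\rho}(0)$, so that $\mathcal{L}\{\tau;s\}$ coincides with (\ref{ttheq22}) and therefore has local pseudofunction boundary behavior on $i((-\lambda,\lambda)\setminus E)$; since $0\notin E$ and $E$ is closed, it has in particular pseudofunction (hence pseudomeasure) behavior at $s=0$. From (\ref{eqbdf}), for every $\varepsilon>0$ we have $|\rho(x)|\le M+\varepsilon$ for $x$ large, so $|\tau(x+h)-\tau(x)|\le (M+\varepsilon)h$ for $h\ge 0$ and $x$ large; in particular $\tau$ is boundedly decreasing. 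Theorem \ref{tbth1} under $(T.1)$ then gives $\tau\in L^{\infty}(\mathbb{R})$, whence $\hat{\tau}\in PM(\mathbb{R})$ as a tempered distribution.

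\textbf{Local pseudofunction structure.} Next I would apply Theorem \ref{cpfth1} on the open set $U=(-\lambda,\lambda)$ with exceptional set $E\cap U$. For each $t_{0}\in E\cap U$ (necessarily nonzero), integration by parts combined with (\ref{ttheq17}) yields
$$it_{0}\int_{0}^{x}\tau(u)e^{-it_{0}u}\mathrm{d}u=\tau(x)e^{-it_{0}x}-\int_{0}^{x}\rho(u)e^{-it_{0}u}\mathrm{d}u,$$
uniformly bounded in $x$ by $\|\tau\|_{\infty}+M_{t_{0}}$. Consequently $S_{t_{0}}(x):=e^{it_{0}x}\int_{0}^{x}\tau(u)e^{-it_{0}u}\mathrm{d}u$, extended by $0$ to $(-\infty,0)$, belongs to $L^{\infty}(\mathbb{R})$, so $\hat{S}_{t_{0}}\in PM(\mathbb{R})$; the identity $S_{t_{0}}'-it_{0}S_{t_{0}}=\tau$ in $\mathcal{D}'(\mathbb{R})$ then gives $\hat{\tau}=i(t-t_{0})\hat{S}_{t_{0}}$ globally, verifying hypothesis (II) of Theorem \ref{cpfth1}. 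Theorem \ref{cpfth1} therefore yields $\hat{\tau}\in PF_{loc}((-\lambda,\lambda))$.

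\textbf{Quantitative finite estimate.} Via the Riemann-Lebesgue-type characterization (\ref{eqRL}) and the Fourier identity $\langle\hat{\tau}(t),e^{iht}\varphi(t)\rangle=\int\tau(y+h)\hat{\varphi}(y)\mathrm{d}y$, we obtain
$$\int_{-\infty}^{\infty}\tau(y+h)\Psi(y)\mathrm{d}y=o(1),\qquad h\to\infty,$$
for every $\Psi\in\mathcal{F}(\mathcal{D}((-\lambda,\lambda)))$. I would then fix once and for all a reference kernel $\Psi_{1}\in\mathcal{F}(\mathcal{D}((-1,1)))$ with $\int\Psi_{1}=1$ and $\mathfrak{C}:=\int|y||\Psi_{1}(y)|\mathrm{d}y\le 2$, and apply the above with its rescaling $\Psi_{\lambda}(y):=\lambda\Psi_{1}(\lambda y)\in\mathcal{F}(\mathcal{D}((-\lambda,\lambda)))$, which satisfies $\int\Psi_{\lambda}=1$ and $\int|y||\Psi_{\lambda}(y)|\mathrm{d}y=\mathfrak{C}/\lambda$. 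The decomposition
$$\tau(h)=\int\tau(h+y)\Psi_{\lambda}(y)\mathrm{d}y+\int(\tau(h)-\tau(h+y))\Psi_{\lambda}(y)\mathrm{d}y,$$
together with $|\tau(h)-\tau(h+y)|\le(M+\varepsilon)|y|$ for $h$ large, gives $\limsup_{h\to\infty}|\tau(h)|\le(M+\varepsilon)\mathfrak{C}/\lambda$, and the conclusion follows on letting $\varepsilon\downarrow 0$. The main technical obstacle is exhibiting a concrete $\Psi_{1}$ attaining $\mathfrak{C}\le 2$; a natural candidate is a Jackson-type non-negative kernel of the form $\Psi_{1}=|\hat{\phi}|^{2}/(2\pi\|\phi\|_{L^{2}}^{2})$ for a suitable $\phi\in\mathcal{D}((-1/2,1/2))$ (for instance a smooth triangle-shaped bump), whose first absolute moment is finite and computable in closed form.
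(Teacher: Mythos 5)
Your reduction steps are sound and essentially the paper's: bounded decrease plus Theorem \ref{tbth1} gives $\tau\in L^{\infty}$, and your verification of hypothesis (II) of Theorem \ref{cpfth1} via the bounded function $S_{t_0}$ is exactly the mechanism of Remark \ref{tthr2}, so you may indeed assume $E=\emptyset$ and work with the convolution averages $\int\tau(y+h)\Psi(y)\,\mathrm{d}y=o(1)$ for $\Psi\in\mathcal{F}(\mathcal{D}((-\lambda,\lambda)))$. (Two small repairs there: the integration by parts produces an extra constant $\hat{\rho}(0)$ and a sign, harmless; and the Lipschitz bound $|\tau(h)-\tau(h+y)|\leq (M+\varepsilon)|y|$ only holds when the whole segment lies beyond some $x_0$, so in the final decomposition you must split off the region $y<x_0-h$ and control it by $\|\tau\|_{L^{\infty}}$ times the integrable tail of $\Psi_{\lambda}$ — also fixable.)

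The genuine gap is the quantitative step. Your argument delivers the constant $\mathfrak{C}=\int|y|\,|\Psi_{1}(y)|\,\mathrm{d}y$ for a kernel $\Psi_{1}\in\mathcal{F}(\mathcal{D}((-1,1)))$ with $\int\Psi_{1}=1$, and you assert without proof that a Jackson-type choice attains $\mathfrak{C}\leq 2$. It does not: for the triangle bump $\phi(t)=(1/2-|t|)_{+}$ one gets $\Psi_{1}(y)=\tfrac{3}{8\pi}\,(\sin(y/4)/(y/4))^{4}$, whose first absolute moment is $\tfrac{3}{8\pi}\cdot 32\int_{0}^{\infty}\sin^{4}u\,u^{-3}\,\mathrm{d}u=\tfrac{12\ln 2}{\pi}\approx 2.65$, and smoothing the bump inside $(-1/2,1/2)$ only worsens this; other standard admissible kernels (e.g. the Hann window) give roughly $2.3$, while the Fej\'{e}r kernel itself has a divergent first moment. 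So the first-moment method, as proposed, proves the theorem only with some explicit constant strictly larger than $2$, and whether any kernel with Fourier support in $(-1,1)$, unit integral and first absolute moment $\leq 2$ exists is a nontrivial extremal problem that you have not addressed (cf. Remark \ref{tthr3}, where the determination of the sharp constant is explicitly postponed). The paper obtains $\mathfrak{C}\leq 2$ by a finer argument that avoids first moments altogether: after normalizing $M=\lambda=1$, it uses the Fej\'{e}r kernel $\phi(x)=(\sin(x/2)/(x/2))^{2}$ (allowed by a density argument since only $\operatorname*{supp}\hat{\phi}\subseteq[-1,1]$ and $\phi\in L^{1}$ are needed), sets $K=\limsup_{h\to\infty}|\tau(h)|$, and derives a contradiction with the $o(1)$ average under the assumption $K\geq 2$, exploiting simultaneously the Lipschitz bound $|\tau(x+h)-\tau(x)|\leq|x|$, the eventual two-sided bound $|\tau|\leq K+\varepsilon$ off the central interval, and numerical evaluation of $\int_{0}^{4}\phi$ and $\int_{4}^{\infty}\phi$. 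To salvage your write-up you would either have to import that contradiction argument or exhibit (with proof) a kernel whose first absolute moment is at most $2$; as it stands, the claimed bound $\mathfrak{C}\leq 2$ is not established.
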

\begin{proof} Replacing $\rho$ by $\rho(x/\lambda)/(M+\varepsilon)$, if necessary, and then taking $\varepsilon\to 0^{+}$ in the argument below, we may suppose that $M=\lambda = 1$ and $|\rho(x)|\leq 1$ for all sufficiently large $x$. Define\footnote{Our proof in fact shows that we may state this result for $\tau$ being merely R-slowly oscillating, in this case one gets $\limsup_{x\to\infty}\left|\tau(x)-\hat{\rho}(0)\right|\leq\frac{\mathfrak{C}\Psi'(0^{+})}{\lambda}$, where $\Psi$ is given by (\ref{eqsg}).} 
  $\tau(x)=\hat{\rho}(0)+\int_{0}^{x}\rho(u)\mathrm{d}u$ for $x>0$.
Theorem \ref{cpfth1} gives us the right to assume that $E=\emptyset$ (cf. Remark \ref{tthr2}). Our hypothesis on the boundary behavior of the Laplace transform of $\tau$ is then that $\hat{\tau} \in PF_{loc}(-1,1)$, or equivalently, that (\ref{ttheq9}) holds for each $\phi\in\mathcal{F}(\mathcal{D}(-1,1))$. 
Note\footnote{Actually, for this two-sided Tauberian condition, it is much easier to show that $\tau$ is bounded than in Theorem \ref{tbth1}.} that Theorem \ref{tbth1} yields $\tau\in L^{\infty}(\mathbb{R})$. A standard density argument shows that (\ref{ttheq9}) holds for each $\phi \in L^{1}(\mathbb{R})$ satisfying  $\operatorname*{supp} \hat{\phi}\subseteq [-1,1]$. We choose the Fej\'{e}r kernel
$$
 \phi(x)=\left( \frac{\sin (x/2)}{x/2} \right)^{2}.
$$
Set $K= \limsup_{h \to \infty} \left|\tau(h)\right|$ and assume $K \geq 2$. Since $\phi$ is non-negative, even and satisfies (as seen by numerical evaluation of the integrals) $2.690 \approx\int^{4}_{0} \phi(x)\mathrm{d}x > \int^{\infty}_{4} \phi(x)\mathrm{d}x \approx 0.452$, we have

\begin{align*}
 \limsup_{h\to \infty} \left|\int^{\infty}_{-\infty} \tau(x+h) \phi(x) \mathrm{d}x\right| & 
\geq 2\left(\int^{2K}_{0}(K-x)\phi(x) \mathrm{d}x - \int^{\infty}_{2K}K \phi(x) \mathrm{d}x\right)\\
& \geq 2\left(\int^{4}_{0}(2-x)\phi(x) \mathrm{d}x - \int^{\infty}_{4}2 \phi(x) \mathrm{d}x\right)\\
& \ \ \ + 2(K-2) \left(\int^{4}_{0} \phi(x) \mathrm{d}x - \int^{\infty}_{4} \phi(x)\mathrm{d}x\right)\\
& \geq 2\left(\int^{4}_{0}(2-x)\phi(x) \mathrm{d}x - \int^{\infty}_{4}2 \phi(x) \mathrm{d}x\right)\\
& \approx 2(1.170 - 0.905) \gg 0,
\end{align*}
contradicting (\ref{ttheq9}). This means that $K < 2$ and thus also ${\mathfrak{C}} \leq 2$. 

\end{proof}

\begin{remark}\label{tthr3} The upper bound 2 given in Theorem \ref{finiteform} for $\mathfrak{C}$ is far from being optimal. The proof method from Theorem \ref{finiteform} can be used to give even better values for $\mathfrak{C}$. Ingham's method from \cite{ingham1935} basically gives $0<\mathfrak{C}\leq 6$ for $L^{1}_{loc}$-boundary behavior. The value $\mathfrak{C} = 2$ was already obtained via Newman's method \cite{a-b,korevaar1982,korevaar2005FR,zagier1997} under the stronger hypothesis of analytic continuation of (\ref{ttheq22}) on $i(-\lambda,\lambda)$; Ransford has also given a related result for power series \cite{ransford1988}. We have not pursued any optimality here, but we mention that it is possible to determine the sharp value of the Tauberian constant $\mathfrak{C}$. The analysis of this problem is however quite involved, as it requires an elaborate study of a certain extremal function, and we postpone it for future  investigations.
\end{remark}

Instead of local pseudofunction boundary behavior of (\ref{ttheq22}) on an open interval, Korevaar works in \cite{korevaar2005FR} with the assumptions
\begin{equation}
\label{kceq1} \hat{\rho}(0):=\lim_{\sigma\to0^{+}} \mathcal{L}\{\rho;\sigma\} \quad \mbox{exists}
\end{equation}
and 
\begin{equation}
\label{kceq2} \frac{\mathcal{L}\{\rho;\sigma+it\}-\mathcal{L}\{\rho;\sigma\}}{it}\end{equation}
converges to a local pseudofunction as $\sigma\to0^{+}$. We can apply exactly the same method employed in the proof of Theorem \ref{finiteform} to extend Korevaar's main result from \cite{korevaar2005FR}:

\begin{corollary}
\label{Korevaar's theorem}
If one replaces  the assumption that $(\ref{ttheq22})$ has local pseudofunction behavior on $i((-\lambda,\lambda)\setminus E)$ in Theorem $\ref{finiteform}$ by $(\ref{kceq1})$ and $(\ref{kceq2})$ converges to a local pseudofunction as $\sigma\to0^{+}$ in $\mathcal{D}'((-\lambda,\lambda)\setminus E)$, then the inequality $(\ref{ttheq23})$ remains valid.  \end{corollary}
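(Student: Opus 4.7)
The plan is to adapt the proof of Theorem \ref{finiteform}; the only step that genuinely needs redoing is the identification, as a local pseudofunction on $(-\lambda,\lambda)\setminus E$, of the distributional boundary value $\hat\tau$ of $\mathcal{L}\{\tau;s\}=(F(s)-\hat\rho(0))/s$, where $F(s)=\mathcal{L}\{\rho;s\}$ and $\tau(x)=\int_0^x\rho(u)\,du-\hat\rho(0)$ as in that proof. Once this is in place, the reduction to $E=\emptyset$ through Theorem \ref{cpfth1}, the derivation of $\tau\in L^\infty(\mathbb{R})$ via Theorem \ref{tbth1}, and the Fej\'er-kernel contradiction argument go through verbatim and deliver (\ref{ttheq23}) with the same absolute constant $\mathfrak{C}\leq 2$. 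After the standard rescaling and modification of $\rho$ on a bounded set (which perturbs $F$ only by an entire function and hence preserves all the relevant boundary behaviors), I may assume $M=\lambda=1$ and $|\rho|\leq M$ globally; in particular $\tau(x)=O(x)$, so $\mathcal{L}\{\tau;s\}$ admits distributional boundary values on $i\mathbb{R}$.

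The crucial identification is $\hat\tau=H$ on $(-\lambda,\lambda)\setminus E$, where $H$ is the local pseudofunction from hypothesis (\ref{kceq2}). For $\sigma>0$, the function $\rho_\sigma(v):=\int_v^\infty\rho(u)e^{-\sigma u}\,du$ lies in $L^1(\mathbb{R})\cap L^\infty(\mathbb{R})$, and a Fubini computation yields
\begin{equation*}
G_\sigma(t):=\frac{F(\sigma+it)-F(\sigma)}{it}=-\int_0^\infty\rho_\sigma(v)e^{-itv}\,dv,
\end{equation*}
so that $G_\sigma$ is the Fourier transform of $-\rho_\sigma\mathbf{1}_{[0,\infty)}$. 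For any $\varphi\in\mathcal{D}((-\lambda,\lambda)\setminus E)$ the Fourier identity therefore gives $\langle G_\sigma,\varphi\rangle=-\int_0^\infty\rho_\sigma(x)\hat\varphi(x)\,dx$. By (\ref{kceq1}) and dominated convergence, $\rho_\sigma(x)=F(\sigma)-\int_0^x\rho(u)e^{-\sigma u}\,du\to -\tau(x)$ pointwise as $\sigma\to 0^+$; meanwhile the uniform estimate $|\rho_\sigma(x)|\leq|F(\sigma)|+Mx\leq C+Mx$, valid for small $\sigma$, combines with $\hat\varphi\in\mathcal{S}(\mathbb{R})$ to legitimize a second application of dominated convergence. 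Invoking (\ref{kceq2}) I conclude
\begin{equation*}
\langle H,\varphi\rangle=\lim_{\sigma\to 0^+}\langle G_\sigma,\varphi\rangle=\int_0^\infty\tau(x)\hat\varphi(x)\,dx=\langle\hat\tau,\varphi\rangle,
\end{equation*}
so $\hat\tau=H$ in $\mathcal{D}'((-\lambda,\lambda)\setminus E)$, which is the desired local pseudofunction property.

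With this identification the proof concludes exactly as for Theorem \ref{finiteform}. Because $0\notin E$, $\hat\tau$ is in particular a local pseudomeasure in a neighborhood of $t=0$, and the bounded decrease of $\tau$ (immediate from $|\rho|\leq M$) allows Theorem \ref{tbth1} to conclude $\tau\in L^\infty(\mathbb{R})$. Integrating by parts in (\ref{ttheq17})---which is admissible since every $t_0\in E$ is nonzero---yields $\sup_x\bigl|\int_0^x\tau(u)e^{-it_0 u}\,du\bigr|<\infty$ for every $t_0\in E\cap(-\lambda,\lambda)$, so the decomposition of $\hat\tau$ used in Corollary \ref{cpfc1} together with Theorem \ref{cpfth1} upgrades $\hat\tau$ to a local pseudofunction on all of $(-\lambda,\lambda)$; the Fej\'er-kernel contradiction argument then produces (\ref{ttheq23}). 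I expect the main technical hurdle to be the identification $\hat\tau=H$: a na\"ive passage to the limit in the algebraic splitting $\mathcal{L}\{\tau;\sigma+it\}=G_\sigma\cdot it/(\sigma+it)+(F(\sigma)-\hat\rho(0))/(\sigma+it)$ is delicate near $t=0$, because the factors $it/(\sigma+it)$ and $\sigma/(\sigma+it)$ fail to converge in any $C^\infty$-multiplier sense on a neighborhood of $0$; it is precisely the Fubini/DCT passage on the time side---where $\rho_\sigma\to-\tau$ pointwise with a polynomial envelope tamed by the Schwartz decay of $\hat\varphi$---that sidesteps this obstruction.
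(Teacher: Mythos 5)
Your proof is correct, and its analytic core coincides with the paper's: a Fubini/Parseval identity realizing $(F(\sigma+it)-F(\sigma))/(it)$ as the Fourier transform of $-\rho_\sigma\mathbf{1}_{[0,\infty)}$, followed by dominated convergence using (\ref{kceq1}) together with the linear envelope $C+Mx$. The difference is organizational rather than substantive. You test against $\varphi\in\mathcal{D}((-\lambda,\lambda)\setminus E)$ and thereby identify the distributional boundary value $\hat\tau$ of (\ref{ttheq22}) with the hypothesized local pseudofunction limit off $E$; this verifies the boundary hypothesis of Theorem \ref{finiteform} verbatim, so your corollary becomes a formal consequence of that theorem's statement, and your subsequent rerun of its proof (Theorem \ref{tbth1} at $s=0$, integration by parts in (\ref{ttheq17}), Theorem \ref{cpfth1} applied to $\hat\tau$ as in Corollary \ref{cpfc1}, then the Fej\'{e}r kernel) is exactly what invoking Theorem \ref{finiteform} would do anyway. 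The paper instead first applies Theorem \ref{cpfth1} to $F$ itself (using (\ref{ttheq17}) at the points of $E$) to push the convergence of (\ref{kceq2}) across $E$ to all of $(-\lambda,\lambda)$, and only then performs the same dominated-convergence passage, but against the shifted test functions $\psi(\cdot-h)$, which yields the convolution averages of $\tau-\hat\rho(0)$ directly; the exceptional set is thus disposed of at the level of $\hat\rho$ before the limit, rather than at the level of $\hat\tau$ afterwards. Both routes rest on the same lemmas and constants, and yours has the mild advantage of exhibiting the corollary as a direct reduction to Theorem \ref{finiteform}. One cosmetic slip: $\rho_\sigma$ as you define it equals the constant $F(\sigma)$ on $(-\infty,0]$, so it is not in $L^{1}(\mathbb{R})$; what you actually use, and what your formulas correctly involve, is $\rho_\sigma\mathbf{1}_{[0,\infty)}\in L^{1}\cap L^{\infty}$, so nothing is affected.
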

\begin{proof}As usual, we may assume that $\rho\in L^{\infty}(\mathbb{R})$. Set $F(s)=\mathcal{L}\{\rho;s\}$. Note that $F$ also has local pseudofunction boundary behavior on $i((-\lambda,\lambda)\setminus E)$ except perhaps at $0$. By Theorem \ref{cpfth1}, we obtain that the local pseudofunction boundary behavior of $F$ actually holds in the larger set $i((-\lambda,\lambda)\setminus \{0\})$; consequently,  (\ref{kceq2}) converges to a local pseudofunction as $\sigma\to0^{+}$ in $\mathcal{D}'((-\lambda,\lambda))$;  let $g$ be its local pseudofunction limit. Fix $\psi\in\mathcal{F}(\mathcal{D}(-\lambda,\lambda))$. Let $\tau_{\sigma}(x) = \int^{x}_{0} \rho(u)e^{-\sigma u} \mathrm{d}u - F(\sigma)$ and $\tau(x) = \int^{x}_{0} \rho(u) \mathrm{d}u$. If $h$ is fixed, 
it follows that
$$
 \left|\int_{0}^{\infty}\tau_{\sigma}(x)\psi(x-h)\mathrm{d}x \right| = \frac{1}{2\pi} \left|\left\langle \frac{F(\sigma+it)-F(\sigma)}{it},e^{iht}\hat{\psi}(-t)\right\rangle\right|.
$$
We can now take $\sigma\to0^{+}$ and apply Lebesgue's dominated convergence theorem to obtain
$$
 \left|\int^{\infty}_{0} (\tau(x) -\hat{\rho}(0))\psi(x-h) \mathrm{d}x \right| =  \frac{1}{2\pi} \left|\left\langle g(t),e^{iht}\hat{\psi}(-t)\right\rangle\right|.
$$
The rest of the proof is exactly the same as that of Theorem \ref{finiteform}.
\end{proof}

We now treat the Tauberian condition (\ref{teq2ingham}). We remark that the next corollary improves Ingham's Tauberian constants from \cite[Thm.~I, p.~464]{ingham1935} as well as it weakens the boundary hypotheses on the Laplace transform. 

\begin{corollary}\label{Ingham theorem}Let $\tau$ be of local bounded variation, vanish on $(-\infty,0)$, have convergent Laplace transform $(\ref{ttheq1})$, and satisfy
\begin{equation*}
\limsup_{x\to\infty}e^{-\theta x}\left|\int_{0^{-}}^{x}e^{\theta u}\mathrm{d}\tau(u)\right|=:\Theta<\infty,
\end{equation*}
where $\theta>0$. 
 Let $G(s)=\int_{0}^{\infty}g(x)e^{-sx}$, where $g$ is a bounded function. Suppose that there are $\lambda>0$ and a closed null set $0\notin E\subset(-\lambda,\lambda)$ such that the analytic function (\ref{ttheq3}) has local pseudofunction boundary behavior on $i((-\lambda,\lambda)\setminus E)$
and for each $it\in i(E\cap(-\lambda,\lambda))$ $(\ref{ttheqbc1})$ is satisfied.
Then,
$$
\limsup_{x\to\infty}\left|\tau(x)- b-\int_{0}^{x}g(y)\mathrm{d}y\right| \leq \left(1+\frac{\theta\mathfrak{C}}{\lambda}\right)\left(\Theta+\theta^{-1}\limsup_{x\to\infty}|g(x)|\right),
$$
where $0<\mathfrak{C}\leq 2$.
\end{corollary}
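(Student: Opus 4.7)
The plan is to reduce Corollary \ref{Ingham theorem} to the finite form Theorem \ref{finiteform}, applied to a suitable auxiliary function built from the Ingham kernel $T_\theta$. The key idea is that, after absorbing $g$ into $\tau$, the identity (\ref{tbeq12}) splits $\tau-b-\int_0^x g$ into the bounded piece $\tilde T_\theta$, which is directly controlled by the Tauberian hypothesis, plus an antiderivative that Theorem \ref{finiteform} controls with the constant $\mathfrak{C}$.

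First I would set $\tilde\tau(x):=\tau(x)-\int_0^x g(y)\,\mathrm{d}y$, so that $\mathcal{L}\{\tilde\tau;s\}-b/s=F(s)$. Its associated Ingham kernel is
$$\tilde T_\theta(x):=e^{-\theta x}\int_{0^-}^{x}e^{\theta u}\,\mathrm{d}\tilde\tau(u)=T_\theta(x)-e^{-\theta x}\int_0^x e^{\theta u}g(u)\,\mathrm{d}u,$$
and a direct estimate on the exponential convolution gives $\limsup_{x\to\infty}|\tilde T_\theta(x)|\leq\Theta+\theta^{-1}\Gamma$, where $\Gamma:=\limsup_{x\to\infty}|g(x)|$. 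The identity (\ref{tbeq12}) applied to $\tilde\tau$ then yields
$$\tau(x)-b-\int_0^x g(y)\,\mathrm{d}y=\tilde T_\theta(x)+\left(\theta\int_0^x\tilde T_\theta(y)\,\mathrm{d}y-b\right),$$
so it suffices to estimate the parenthesized quantity.

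I would then apply Theorem \ref{finiteform} to $\rho:=\theta\tilde T_\theta$ with the choice $\hat\rho(0)=b$. Using $\mathcal{L}\{\tilde T_\theta;s\}=s\mathcal{L}\{\tilde\tau;s\}/(s+\theta)$, a short calculation produces
$$\frac{\mathcal{L}\{\rho;s\}-b}{s}=\frac{\theta F(s)-b}{s+\theta},$$
which has local pseudofunction boundary behavior on $i((-\lambda,\lambda)\setminus E)$ since $F$ does, $b$ is a constant, and $1/(s+\theta)$ is smooth on $i\mathbb{R}$ and hence a multiplier for local pseudofunctions. The main technical point is the verification of the finite bound (\ref{ttheq17}) at each exceptional frequency $it\in iE\cap i(-\lambda,\lambda)$; for this I would observe that
$$\frac{\mathcal{L}\{\rho;s\}}{s-it}=\frac{\theta s F(s)}{(s+\theta)(s-it)}+\frac{\theta b}{(s+\theta)(s-it)}$$
has local pseudomeasure boundary behavior at $it$, because $\theta s/(s+\theta)$ is smooth there, $F(s)/(s-it)$ is a local pseudomeasure at $it$ by the hypothesis (\ref{ttheqbc1}), and $1/(s-it)$ is itself a global pseudomeasure. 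Since $\rho$ is bounded, the primitive $\int_0^x e^{-itu}\rho(u)\,\mathrm{d}u$ is Lipschitz and hence boundedly decreasing, so Theorem \ref{tbth1} converts the pseudomeasure behavior just established into the uniform bound required by (\ref{ttheq17}).

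Once Theorem \ref{finiteform} applies, it gives
$$\limsup_{x\to\infty}\left|\theta\int_0^x\tilde T_\theta(y)\,\mathrm{d}y-b\right|\leq\frac{\mathfrak{C}}{\lambda}\limsup_{x\to\infty}|\theta\tilde T_\theta(x)|\leq\frac{\mathfrak{C}(\theta\Theta+\Gamma)}{\lambda},$$
which, combined with the bound $\limsup|\tilde T_\theta|\leq\Theta+\theta^{-1}\Gamma$, yields the claimed inequality after the algebraic identity
$$\Theta+\theta^{-1}\Gamma+\frac{\mathfrak{C}(\theta\Theta+\Gamma)}{\lambda}=\left(1+\frac{\theta\mathfrak{C}}{\lambda}\right)(\Theta+\theta^{-1}\Gamma).$$
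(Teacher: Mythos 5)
Your proof is correct and follows essentially the same route as the paper: absorb $g$ (and $b$) via the identities (\ref{tbeq12})--(\ref{tbeq13}), apply the finite form Theorem \ref{finiteform} to the Ingham kernel $T_{\theta}$ (you use $\theta\tilde T_{\theta}$ with $\hat\rho(0)=b$, the paper uses $T_{\theta}$ of the reduced function with $\hat\rho(0)=0$, a cosmetic difference), and combine the two bounds. The only added value is that you spell out the verification of the exceptional-set condition through Theorem \ref{tbth1}, which the paper leaves implicit via Remark \ref{tthr2}.
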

\begin{proof} Note that $\tau_{1}(x)= \tau(x)-b-\int_{0}^{x}g(y)$ satisfies 
\begin{equation*}
\limsup_{x\to\infty}e^{-\theta x}\left|\int_{0^{-}}^{x}e^{\theta u}\mathrm{d}\tau_{1}(u)\right|\leq \Theta+\theta^{-1}\limsup_{x\to\infty}|g(x)|,
\end{equation*}
so that we may assume $b=0$ and $g=0$.
We retain the notation exactly as in the proof of Corollary \ref{tbc1}. Under our assumption, the absolute value of $T_{\theta}$ has superior limit $\Theta$. We employ (\ref{tbeq13}), so
$s^{-1} \mathcal{L}\{T_{\theta};s\}=(s+\theta)^{-1} \mathcal{L}\{\tau;s\}$ . Theorem \ref{finiteform} applied to $T_{\theta}$ gives $\lim_{x\to\infty}|T^{(-1)}_{\theta}(x)|\leq \Theta \mathfrak{C}/\lambda $ and the result hence follows from (\ref{tbeq12}). 
\end{proof}

\section{Power series}
\label{section power series}
This last section is devoted to power series. We apply our ideas from the previous sections to improve results in the neighborhood of the Katznelson-Tzafriri theorem \cite{katznelson}. 

Let us start with some preliminaries. We identify functions and distributions on the unit circle of the complex plane with ($2\pi$-)periodic functions and distributions on the real line. Thus, every periodic distribution can be expanded as a Fourier series \cite{vladimirov} 
\begin{equation}
\label{pseq1}
f(\theta)= \sum_{n=-\infty}^{\infty} c_n e^{in\theta},
\end{equation}
where the Fourier coefficients satisfy the growth estimates $c_n=O(|n|^{k})$ for some $k$. Conversely, if a (two-sided) sequence $\{c_n\}_{n\in\mathbb{Z}}$ has this growth property, then (\ref{pseq1}) defines a (tempered) distribution. Let $\mathbb{D}$ be the unit disc. If $F(z)$ is analytic in $\mathbb{D}$ then distributional boundary values on an open arc of the unit circle $\partial\mathbb{D}$ are defined via the distributional limit $\lim_{r\to1^{-}}F(re^{i\theta})$. We call a periodic distribution $f$ a pseudofunction (pseudomeasure) on $\partial\mathbb{D}$ if $f\in PF_{loc}(\mathbb{R})$ ($f\in PM_{loc}(\mathbb{R})$). It is not hard to verify that the latter holds if and only if its Fourier coefficients tend to 0 (are bounded). We include a proof of this simple fact for the sake of completeness.
\begin{proposition}\label{cpfperiodic} A $2\pi$-periodic distribution with Fourier series $(\ref{pseq1})$ is a pseudofunction (pseudomeasure) on $\partial\mathbb{D}$ if and only if  $c_{n}=o(1)$ ($c_n=O(1)$).
\end{proposition}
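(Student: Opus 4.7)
My plan is to use the characterization of local pseudofunctions/pseudomeasures from Section \ref{section preli}: $f \in PF_{loc}(\mathbb{R})$ if and only if $\varphi f \in PF(\mathbb{R})$ for every $\varphi \in \mathcal{D}(\mathbb{R})$, and analogously for local pseudomeasures. The key computation common to both directions is an explicit formula for $\widehat{\varphi f}$ in terms of the Fourier coefficients $c_n$. Since $f=\sum_n c_n e^{in\cdot}$ converges in $\mathcal{S}'(\mathbb{R})$, any compactly supported $\varphi\in\mathcal{D}(\mathbb{R})$ satisfies
\[
\widehat{\varphi f}(t)=\langle f(x),\varphi(x)e^{-itx}\rangle=\sum_{n\in\mathbb{Z}} c_n\,\hat{\varphi}(t-n),
\]
with absolute convergence because $\hat{\varphi}\in\mathcal{S}(\mathbb{R})$ is rapidly decreasing.

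For the \emph{sufficiency} direction, assume $c_n=O(1)$. Since $\hat{\varphi}\in\mathcal{S}(\mathbb{R})$, the estimate $|\hat{\varphi}(y)|\leq C(1+|y|)^{-2}$ gives that $\sum_n |\hat{\varphi}(t-n)|$ is bounded uniformly in $t\in\mathbb{R}$, so $\widehat{\varphi f}\in L^\infty(\mathbb{R})$, showing $\varphi f\in PM(\mathbb{R})$. If additionally $c_n=o(1)$, fix $\varepsilon>0$, choose $N$ with $|c_n|<\varepsilon$ for $|n|>N$, and split
\[
|\widehat{\varphi f}(t)|\leq \sum_{|n|\leq N}|c_n||\hat{\varphi}(t-n)|+\varepsilon\sum_{|n|>N}|\hat{\varphi}(t-n)|.
\]
The finite sum tends to $0$ as $|t|\to\infty$, while the tail is bounded by $\varepsilon$ times a constant independent of $t$; letting $\varepsilon\to 0^+$ yields $\widehat{\varphi f}(t)=o(1)$, hence $\varphi f\in PF(\mathbb{R})$.

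For the \emph{necessity} direction, I will make a single clever choice of $\psi\in\mathcal{D}(\mathbb{R})$ that lets us read off $c_N$ directly from $\widehat{\psi f}$ at integer points. Take any $\psi\in\mathcal{D}(\mathbb{R})$ with compact support and satisfying the partition-of-unity identity $\sum_{k\in\mathbb{Z}}\psi(x-2\pi k)=1$; such $\psi$ are easily constructed by normalizing a non-negative bump supported in $(-2\pi,2\pi)$. By Poisson summation (equivalently, by identifying Fourier coefficients of the constant function $1$ as a $2\pi$-periodic function), this identity is equivalent to $\hat\psi(n)=2\pi\,\delta_{n,0}$ for all $n\in\mathbb{Z}$. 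Substituting into the displayed formula above at $t=N\in\mathbb{Z}$ gives
\[
\widehat{\psi f}(N)=\sum_n c_n\,\hat\psi(N-n)=2\pi c_N.
\]
Hence, if $f\in PF_{loc}(\mathbb{R})$ then $\psi f\in PF(\mathbb{R})$ and $c_N=(2\pi)^{-1}\widehat{\psi f}(N)\to 0$; if $f\in PM_{loc}(\mathbb{R})$ then $\widehat{\psi f}\in L^\infty(\mathbb{R})$ is in fact continuous (it is the Fourier transform of a compactly supported distribution) and so $c_N=O(1)$. The main technical point is simply the partition-of-unity/Poisson trick that produces a test function whose Fourier transform picks out the Dirac comb on the integers; once that is observed, everything reduces to rapidly decreasing tail estimates.
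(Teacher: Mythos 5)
Your proposal is correct and follows essentially the same route as the paper: both reduce to the identity $\widehat{\varphi f}(t)=\sum_n c_n\hat\varphi(t-n)$, handle sufficiency by rapid decay of $\hat\varphi$, and handle necessity by producing a test function whose Fourier transform vanishes at nonzero integers and is nonzero at $0$. The only (minor) difference is in how that test function is built: the paper multiplies $\hat\psi$ by $\sin(\pi\xi)/(\pi\xi)$ on the Fourier side, while you obtain the same interpolation property on the physical side via a $2\pi$-periodic partition of unity together with Poisson summation; both constructions are standard and entirely equivalent here.
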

\begin{proof} We only give the proof in the pseudofunction case, the pseudomeasure one can be treated similarly. We have that $f\in PF_{loc}(\mathbb{R})$ if and only if
\begin{equation}
\label{pseq2}
\langle f(\theta),e^{-ih\theta}\varphi(-\theta)\rangle=\sum_{n=-\infty}^{\infty}c_{n}\hat{\varphi}(h-n)= o(1), \quad |h|\to\infty,
\end{equation}
for each $\varphi\in\mathcal{D}(\mathbb{R})$. The latter certainly holds if $c_{n}=o(1)$. For the direct implication, we select in  (\ref{pseq2}) a test function $\varphi$ with $\hat{\varphi}(j)=\delta_{0,j}$. (For instance, 
$$\hat{\varphi}(\xi)=\hat{\psi}(\xi) \frac{\sin (\pi \xi )}{\pi \xi} $$  
with an arbitrary test function $\psi\in\mathcal{D}(\mathbb{R})$ such that $\hat{\psi}(0)=1$ satisfies these requirements). Setting $h=N\in\mathbb{Z}$, we obtain $c_N=\langle f(\theta),e^{-iN\theta}\varphi(-\theta)\rangle=o(1)$.
\end{proof}

We are ready to discuss Tauberian theorems. The classical Fatou-Riesz theorem for power series states that if $F(z)=\sum_{n=0}^{\infty}c_nz^{n}$ is convergent on $|z|<1$, has analytic continuation to a neighborhood of $z=1$, and the coefficients satisfy the Tauberian condition $c_n\to 0$, then $\sum_{n=0}^{\infty}c_n$ converges to $F(1)$. The boundary behavior has been weakened \cite[Prop.~14.3, p.~157]{korevaarbook} to local pseudofunction boundary behavior of $(F(z)-F(1))/(z-1)$ near $z=1$ (for some suitable constant $F(1)$). As an application of Theorem \ref{tth2}, we can further relax the Tauberian condition on the coefficients. We can also refine a boundedness theorem of Korevaar \cite[Prop.~III.14.3, p.~157] {korevaarbook} by replacing boundedness of the Taylor coefficients by a one-sided bound. 

\begin{theorem}
\label{psth1} Let $F(z)=\sum_{n=0}^{\infty}c_nz^{n}$ be analytic on the unit disc $\mathbb{D}$. 
\begin{itemize}
\item [(i)] Suppose the sequence $\{c_n\}_{n=0}^{\infty}$ is bounded from below.
If 
\begin{equation*}
\frac{F(z)}{z-1} \quad \mbox{has pseudomeasure boundary behavior at } z=1,
\end{equation*}
then $\sum_{n=0}^{N}c_n=O(1)$. 
\item [(ii)] Suppose that $\liminf_{n\to\infty}c_n\geq 0.
$  
If there is a constant $F(1)$ such that
\begin{equation*}
\frac{F(z)-F(1)}{z-1} \quad \mbox{has pseudofunction boundary behavior at } z=1,
\end{equation*}
then $\sum_{n=0}^{\infty}c_n$ converges to $F(1)$. 

\end{itemize}

\end{theorem}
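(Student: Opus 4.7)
The plan is to reduce Theorem \ref{psth1} to the Laplace transform theorems of Sections \ref{section boundedness theorem} and \ref{section tauberians Laplace} via the substitution $z=e^{-s}$. To this end associate with the sequence $\{c_{n}\}_{n=0}^{\infty}$ the locally integrable step function
\[
\tau(x):=\sum_{n=0}^{\lfloor x\rfloor} c_{n}\quad\text{for } x\ge 0, \qquad \tau(x):=0 \text{ for } x<0.
\]
Since $F$ is analytic on $\mathbb{D}$, we have $|c_{n}|=O((1+\varepsilon)^{n})$ for every $\varepsilon>0$, so $\tau$ grows subgeometrically and $\mathcal{L}\{\tau;s\}$ converges for $\Re e\:s>0$; summing the geometric integrals $\int_{k}^{k+1}e^{-sx}\mathrm{d}x$ and collapsing via the identity $\sum_{k=0}^{\infty}\bigl(\sum_{n=0}^{k}c_{n}\bigr)r^{k}=F(r)/(1-r)$ (valid for $|r|<1$) yields the basic formula
\[
\mathcal{L}\{\tau;s\}=\frac{F(e^{-s})}{s}, \qquad \Re e\:s>0.
\]

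The bridge between boundary behavior on the disc side and on the half-plane side is the factorization $e^{-s}-1=-s\,h(s)$, where $h(s)=(1-e^{-s})/s$ is entire with $h(0)=1$. This gives
\[
\frac{F(e^{-s})}{s}=-h(s)\cdot\frac{F(z)}{z-1}\quad\text{and}\quad \frac{F(e^{-s})-F(1)}{s}=-h(s)\cdot\frac{F(z)-F(1)}{z-1}.
\]
Since $h$ is analytic near $s=0$ and $C^{\infty}$ functions are local multipliers for $PM_{loc}$ and $PF_{loc}$ (cf.\ Section \ref{section preli}), pseudomeasure (respectively, pseudofunction) boundary behavior of the right-hand sides at $z=1$ transfers to the corresponding boundary behavior of the left-hand sides at $s=0$.

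For part (i), the lower bound $c_{n}\ge -C$ yields $\tau(x+h)-\tau(x)\ge c_{\lfloor x\rfloor+1}\ge -C$ whenever $h\in[0,\tfrac{1}{2}]$ and $x$ is large, so $\tau$ satisfies the Tauberian condition $(T.1)$; Theorem \ref{tbth1} then delivers $\tau(x)=O(1)$, which is exactly $\sum_{n=0}^{N}c_{n}=O(1)$. For part (ii), the hypothesis $\liminf c_{n}\ge 0$ means that, given $\varepsilon>0$, the only possibly negative increment of $\tau(x+h)-\tau(x)$ for $h\in[0,1]$ and $x$ sufficiently large comes from a single coefficient $c_{\lfloor x\rfloor+1}\ge -\varepsilon$; consequently $\tau$ is very slowly decreasing in the sense of (\ref{ttheq5}). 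Applying Theorem \ref{tth2} with $b'=F(1)$ and $G\equiv 0$ produces the representation (\ref{repvso}) with $b=F(1)$ and $g\equiv 0$, so $\tau(x)=F(1)+o(1)$, i.e.\ $\sum_{n=0}^{\infty}c_{n}=F(1)$.

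The only genuinely subtle point in the argument is the verification that the conversion factor $h(s)$ acts as a multiplier for the two local function spaces involved; once this observation and the identity $\mathcal{L}\{\tau;s\}=F(e^{-s})/s$ are in hand, the transfer of hypotheses from the disc to the half-plane is automatic, and the statement follows directly from the two Laplace transform theorems.
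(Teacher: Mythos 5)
Your proposal is correct and follows essentially the same route as the paper: both define the step function $\tau(x)=\sum_{n\leq x}c_n$, use the identity $\mathcal{L}\{\tau;s\}=\frac{1-e^{-s}}{s}\cdot\frac{F(e^{-s})}{1-e^{-s}}$ together with the fact that the entire factor $(1-e^{-s})/s$ is a multiplier for local pseudomeasures/pseudofunctions, and then invoke Theorem \ref{tbth1} for part (i) and Theorem \ref{tth2} for part (ii). No substantive differences to report.
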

\begin{remark}\label{psrk1} The converses of Theorem \ref{psth1}(i) and Theorem \ref{psth1}(ii) trivially hold: If $\sum_{n=0}^{\infty}c_n=F(1)$ ($\sum_{n=0}^{N}c_n=O(1)$), then the function $(F(z)-F(1))/(z-1)$ (the function $F(z)/(z-1)$) has global pseudofunction boundary behavior (global pseudomeasure boundary behavior) on $\partial\mathbb{D}$.
\end{remark}
\begin{proof} Set $\tau(x)=\sum_{n\leq x}c_n$. Under the hypotheses of part (i), this function is boundedly decreasing and it has Laplace transform
$$
\mathcal{L}\{\tau;s\}=\frac{1-e^{-s}}{s}\cdot \frac{F(e^{-s})}{1-e^{-s}}, \quad \Re e\:s>0,
$$
with pseudomeasure boundary behavior at $s=0$ because analytic functions are multipliers for local pseudomeasures. That the partial sums are bounded follows from Theorem \ref{tbth1}.
In part (ii), $\tau$ is clearly very slowly decreasing and  
\begin{equation*}
\mathcal{L}\{\tau;s\}-\frac{F(1)}{s}=\frac{1-e^{-s}}{s}\cdot \frac{F(e^{-s})-F(1)}{1-e^{-s}}, \quad \Re e\:s>0,
\end{equation*}
has pseudofunction boundary behavior at $s=0$. Theorem \ref{tth2} then yields $\sum_{n=0}^{\infty}c_n=\lim_{x\to\infty}\tau(x)=F(1)$. \end{proof}
The Katznelson-Tzafriri theorem \cite[Thm.~2$'$, p.~317]{katznelson} allows one to conclude that an analytic function $F(z)$ in $\partial \mathbb{D}$ has pseudofunction boundary behavior on $\partial\mathbb{D}$ from pseudofunction boundary behavior except at $z=1$ plus the additional assumption that the partial sums of its Taylor coefficients form a bounded sequence. Extensions of this theorem were obtained in \cite{a-o-r} and \cite[Sect. 13 and 14, Chap.~III]{korevaarbook}. The ensuing theorem contains all of those results. Indeed, Theorem \ref{psth2} removes earlier unnecessary uniformity assumptions on possible boundary singularity sets in a theorem by Allan, O'Farell, and Ransford \cite{a-o-r} (cf. also \cite[Thm.~ III.14.5, p.~159]{korevaarbook}), and furthermore relaxes the $H^1$-boundary behavior to pseudofunction boundary behavior.  

\begin{theorem}\label{psth2}  Let $F(z)=\sum_{n=0}^{\infty}c_nz^{n}$ be analytic in the unit disc $\mathbb{D}$.
 Suppose that there is a closed subset $E\subset \partial\mathbb{D}$ of null (linear) measure such that $F$ has local pseudofunction boundary behavior on $\partial\mathbb{D}\setminus E$, whereas for each $e^{i\theta}\in E$ the bound 
\begin{equation}
\label{pseq3}
\sum^{N}_{n=0}c_{n}e^{in\theta }=O_{\theta}(1)
\end{equation}
holds. Then, $F$ has pseudofunction boundary behavior on the whole $\partial\mathbb{D}$, that is, $c_n=o(1)$. In particular, $\sum^{\infty}_{n=0}c_{n}e^{in \theta_0 }$ converges at every point where there is a constant $F(e^{i\theta_{0}})$ such that
\begin{equation*}
\frac{F(z)-F(e^{i\theta_{0}})}{z-e^{i\theta_0 }}
\end{equation*}
has pseudofunction boundary behavior at $z=e^{i\theta_0}\in\partial\mathbb{D}$, and moreover
$$
\sum^{\infty}_{n=0}c_{n}e^{in \theta_0 }=F(e^{i\theta_0}).
$$
\end{theorem}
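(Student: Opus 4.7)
The plan is to derive the principal assertion $c_n=o(1)$ by applying the local pseudofunction characterization of Theorem \ref{cpfth1} to the $2\pi$-periodic boundary distribution $f\in\mathcal{D}'(\mathbb{R})$ of $F$, and then invoking Proposition \ref{cpfperiodic}; the supplementary convergence statement will follow from Theorem \ref{psth1}(ii) applied to a suitably rotated power series.

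First I would verify that $F$ admits a global distributional boundary value $f$ on $\partial\mathbb{D}$ (identified with a $2\pi$-periodic distribution on $\mathbb{R}$). Off $E$ this is immediate from the pseudofunction hypothesis. At a point $e^{i\theta_0}\in E$, set $\tilde{c}_n:=c_n e^{in\theta_0}$ and $S_N:=\sum_{n=0}^{N}\tilde{c}_n$, so that $S_N=O(1)$ by $(\ref{pseq3})$. Abel summation gives
\[
\frac{F(z)}{1-ze^{-i\theta_0}}\;=\;\sum_{n=0}^{\infty}S_n\,(ze^{-i\theta_0})^{n},\qquad z\in\mathbb{D},
\]
which both yields the local polynomial bound $|F(z)|\ll |1-ze^{-i\theta_0}|/(1-|z|)$ near $e^{i\theta_0}$ (enough, combined with the pseudofunction estimates off $E$ and compactness of $\partial\mathbb{D}$, to guarantee a global distributional boundary value via the K\"othe-type criterion of Subsection \ref{subsection dvb}) and, through Proposition \ref{cpfperiodic}, pseudomeasure boundary behavior of $\sum_{n}S_n z^n$ on all of $\partial\mathbb{D}$. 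Reinterpreting, $F(z)/(z-e^{i\theta_0})$ has pseudomeasure boundary behavior at $e^{i\theta_0}$, and since $(\theta-\theta_0)/(e^{i\theta}-e^{i\theta_0})$ is smooth and non-vanishing near $\theta_0$---and $C^{\infty}$-functions are multipliers for local pseudomeasures---the quotient $f_{\theta_0}(\theta):=f(\theta)/(\theta-\theta_0)$ is a local pseudomeasure near $\theta_0$, so that $f=(\theta-\theta_0)f_{\theta_0}$ on a neighborhood of $\theta_0$.

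Letting $E'\subset\mathbb{R}$ denote the closed, null, $2\pi$-periodic preimage of $E$, this exactly verifies the hypotheses of Theorem \ref{cpfth1} for $f\in\mathcal{D}'(\mathbb{R})$: condition (I), $\operatorname*{sing\: supp}_{PF} f\subseteq E'$, is the pseudofunction behavior of $F$ on $\partial\mathbb{D}\setminus E$, and condition (II) is the factorization just obtained. Theorem \ref{cpfth1} hence gives $f\in PF_{loc}(\mathbb{R})$, and Proposition \ref{cpfperiodic} translates this into $c_n=o(1)$. For the ``in particular'' assertion, I would apply Theorem \ref{psth1}(ii) to $G(z):=F(ze^{i\theta_0})=\sum(c_n e^{in\theta_0})z^{n}$: its Taylor coefficients tend to $0$ by what has just been established, while $(G(z)-F(e^{i\theta_0}))/(z-1)$ has pseudofunction boundary behavior at $z=1$ because the hypothesis on $(F(w)-F(e^{i\theta_0}))/(w-e^{i\theta_0})$ at $w=e^{i\theta_0}$ transfers through the change of variable $w=ze^{i\theta_0}$ up to the constant factor $e^{i\theta_0}$. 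Theorem \ref{psth1}(ii) then delivers $\sum_{n=0}^{\infty}c_n e^{in\theta_0}=F(e^{i\theta_0})$.

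The main obstacle, I expect, will be converting the one-sided, pointwise-in-$\theta_0$ (and possibly wildly non-uniform in $\theta_0$) partial-sum bound $(\ref{pseq3})$ into the algebraic factorization $f=(\theta-\theta_0)f_{\theta_0}$ with a \emph{local pseudomeasure} $f_{\theta_0}$ that Theorem \ref{cpfth1} demands. Fortunately, Abel summation combined with Proposition \ref{cpfperiodic} performs this conversion essentially for free, and the remainder of the argument is an assembly of previously developed machinery; removing the earlier uniformity assumption on the singular set is handled automatically by Theorem \ref{cpfth1}, whose whole purpose is to absorb such pointwise bounds into a global pseudofunction statement.
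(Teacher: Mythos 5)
Your proposal is correct and follows essentially the same route as the paper: Abel summation converts the bounded partial sums (\ref{pseq3}) into pseudomeasure boundary behavior of $F(z)/(z-e^{i\theta})$ at each point of $E$, Theorem \ref{cpfth1} combined with Proposition \ref{cpfperiodic} then gives $c_n=o(1)$, and the convergence claim follows from Theorem \ref{psth1}(ii) after rotating to $\theta_0=0$ (the paper additionally notes the harmless splitting into real and imaginary parts so that the one-sided hypothesis of Theorem \ref{psth1}(ii) applies, which is automatic once $c_n=o(1)$). The extra details you supply (the K\"othe-type bound for the global boundary value and the smooth-multiplier passage from the factor $e^{i\theta}-e^{i\theta_0}$ to $\theta-\theta_0$) are exactly the steps the paper leaves implicit.
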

\begin{proof}  Since (\ref{pseq3}) implies that 
\begin{equation}
\label{pseq4}
\frac{F(z)}{z-e^{i\theta }} \quad \mbox{has pseudomeasure boundary behavior at }e^{i\theta}\in\partial\mathbb{D},
\end{equation}
the first assertion follows by combining Theorem \ref{cpfth1} and Proposition \ref{cpfperiodic}. For the last claim, since we now know that $c_n=o(1)$, we can suppose that $\theta_0=0$ and, by splitting into real and imaginary parts, that the $c_n$ are real-valued. The convergence of $\sum_{n=0}^{\infty}c_n $ is thus a direct consequence of Theorem \ref{psth1}(ii).
\end{proof}
We end this article with a comment about (\ref{pseq3}).
\begin{remark}
\label{psrk2} Naturally, Theorem \ref{psth2} also holds if we replace (\ref{pseq3}) by the weaker assumption (\ref{pseq4}) for each $e^{i\theta}\in E$. On the other hand, if the coefficients $\{c_n\}_{n=0}^{\infty}$ are known to be bounded, then condition (\ref{pseq4}) becomes equivalent to  (\ref{pseq3}), as follows as in the proof of Theorem \ref{psth1}(i) by applying Theorem \ref{tbth1} to the boundedly oscillating function $\tau(x)=\sum_{n\leq x}c_ne^{in\theta}$. \end{remark}

\end{document}